\documentclass[11pt]{article}
\usepackage{hyperref}
\usepackage{amsmath, graphicx, latexsym, amssymb, amsthm, amsfonts}
\usepackage[mathscr]{eucal}
\usepackage{graphics}
\usepackage{color}
\usepackage{epsfig}
\usepackage{enumerate}
\usepackage{mathtools}

\mathtoolsset{showonlyrefs}

\newtheorem{lemma}{Lemma}[section]
\newtheorem{theorem}[lemma]{Theorem}
\newtheorem{corollary}[lemma]{Corollary}

\newtheorem{proposition}[lemma]{Proposition}

\newtheorem{remark}[lemma]{Remark}

\allowdisplaybreaks

\setcounter{MaxMatrixCols}{10}

\newcommand{\xbf}{\textbf{x}}

\newcommand{\eps}{{\varepsilon}}


\newcommand{\Emb}{{\mathbb{E}}}

\newcommand{\NN}{{\mathbb{N}}}

\newcommand{\RR}{{\mathbb{R}}}
\newcommand{\EE}{{\mathbb{E}}}
\newcommand{\PP}{{\mathbb{P}}}


\newcommand{\clb}{{\mathcal{B}}}

\newcommand{\clp}{{\mathcal{P}}}

\newcommand{\cld}{{\mathcal{D}}}

\newcommand{\clf}{{\mathcal{F}}}

\newcommand{\cls}{{\mathcal{S}}}


\newcommand{\Pbf}{{\mathbf{P}}}

\newcommand{\Xbf}{{\mathbf{X}}}
\newcommand{\Ybf}{{\mathbf{Y}}}
\newcommand{\Zbf}{{\mathbf{Z}}}

\newcommand{\one}{{\boldsymbol{1}}}



\newcommand{\Om}{\Omega}





\newcommand{\bfX}{\mathbf{X}}
\newcommand{\bfY}{\mathbf{Y}}
\newcommand{\bfZ}{\mathbf{Z}}

\newcommand{\bfz}{\mathbf{z}}
\newcommand{\bfx}{\mathbf{x}}
\newcommand{\bfy}{\mathbf{y}}

\newcommand{\cll}{\mathcal{L}}
\newcommand{\clg}{\mathcal{G}}

\newcommand{\be}{\begin{equation}}
\newcommand{\ee}{\end{equation}}
\newcommand{\beq}{\begin{align*}}
\newcommand{\eeq}{\end{align*}}

\newcommand{\ybf}{\textbf{y}}
\newcommand{\zbf}{\textbf{z}}
\newcommand{\veps}{\varepsilon}
\newcommand{\dbl}{d_{\mbox{\tiny{BL}}}}
 \topmargin 0.0truein
 \oddsidemargin 0.0truein
 \evensidemargin 0.0truein
 \textheight 8.5truein
 \textwidth 6.5truein
 \footskip 0.6truein
 \headheight 0.0truein
 \headsep 0.3truein
 \parskip 0.5em
\numberwithin{equation}{section}
\begin{document}

\title{Long Time Asymptotics for the Stochastic Follow-the-Leader System}
\author{Sayan Banerjee, Amarjit Budhiraja and Dilshad Imon}
\date{}
\maketitle
\begin{abstract}
We introduce and analyze a class of interacting particle systems on the real line that
combine features of the stochastic rat race and (deterministic) follow-the-leader models. The particle system
evolves as a continuous-time pure jump process: the leading particle moves independently, at Exponential jump times,
with constant jump rate and iid jump sizes distributed
according to a law~$\theta$, while each of the remaining particles jumps forward, at Exponential times, at 
rate equal to its distance from the particle immediately ahead, with jump sizes drawn
uniformly from the corresponding gap. The dynamics thus encode competition for
leadership together with distance-dependent stochastic interactions. Our main focus is the associated \emph{gap process}, representing the vector of
inter-particle distances. We establish the existence of a unique stationary distribution for
the gap process and
 prove {\em uniform} geometric ergodicity. Further, when the
leader's jump sizes follow an Exponential distribution, we identify the stationary law
explicitly as a product of independent Exponential laws. For this case, we
also derive bounds on the mixing time, showing that it scales between
$\Theta(n)$ and $O(n(\log n)^2)$ for an $n$-particle system. As an application of the mixing time results we establish a functional limit theorem that characterizes fluctuations of particle states at large time, under a suitable spatial and temporal scaling and large particle limit. Finally, when the leader's
jumps have heavy but integrable tails, we show that each gap has at least one additional finite moment under stationarity than that of the leader's jump size distribution. Together, these results provide a comprehensive analysis of stability
and convergence to equilibrium in an interacting jump system with state-dependent
rates and jump sizes, and local interactions. The model offers a tractable setting for exploring ergodicity, explicit invariant
laws, and mixing behavior in non-diffusive particle systems.
\noindent\newline

\noindent \textbf{AMS 2010 subject classifications:} 60K35, 65C35, 82C22, 46N30.\newline

\noindent \textbf{Keywords:} Interacting particle systems, nearest-neighbor interactions, stochastic stability, ergodicity, long-time behavior, mixing times, Hammersley's process, stick process.
\end{abstract}

\section{Introduction}

\subsection{Motivation and model}
The dynamics of complex systems often involve local interactions between individual agents, leading to emergent collective behavior. In this article, we focus on a specific such type of interaction between $n$ particles on the real line where each particle `follows' the particle (or particles) ahead of it, leading to `flocking' type behavior in the full system.
Two distinct existing modeling approaches used to understand such phenomena are the \emph{stochastic rat race} and \emph{follow-the-leader} models. 

The stochastic rat race model is a  framework exploring competitive dynamics within individuals in a population. For instance, \cite{ben2007toy} introduces a system where any particle jumps forward by a random length uniformly chosen from 0 to its distance from the leader. The leader seeks to maintain its position by jumping by a length randomly chosen from a uniform distribution with width given by its lead with respect to the next strongest individual. All particles have the same constant jump rate.

On the other hand, follow-the-leader models are deterministic ODE based systems that describe scenarios where individuals adjust their behavior based on those of a designated leader or a group of leading members.  In traffic flow, follow-the-leader models analyze how vehicles adjust their speed and spacing based on the vehicle ahead, and can be viewed as a numerical method to compute solutions for traffic flow (\cite{gazis1961nonlinear}, \cite{ridder2018traveling}, \cite{holden2017follow}). The paper \cite{gazis1961nonlinear} introduces such a model where the speed of a vehicle is a non-linear function of the distance from the one immediately ahead of it - that is, each driver adjusts their speed based on that of the speed of the car in front. This model also includes an overall leader (car farthest in front) who moves at a speed that is not influenced by the state of the remaining system. Extensions to this classical model include \cite{ridder2018traveling} which studies a traffic flow model where the velocity of a given car depends on the relative positions of all other cars which are present within a given distance in front of it. 

Motivated by these two strands of work, we introduce and analyze in this paper an
interacting particle system that combines features of both paradigms while also
introducing new features to the dynamics. Our model evolves as a pure jump process on the real line. The
leading particle, or ``leader", moves forward according to a renewal reward process,
jumping at constant rate with independent jump sizes distributed according to a law
$\theta$. The remaining particles advance in response to their immediate predecessor:
the $i^{th}$ particle jumps forward at a rate equal to the distance between itself and the
particle directly ahead. Further, the new position after a jump is drawn uniformly from the
interval between its current location and that of the particle in front. The system therefore
retains the asymmetry of a designated leader, as in follow-the-leader models, while
introducing competitive, distance-dependent stochastic dynamics similar to the rat
race framework. Such dynamics can be used to model various phenomena such as traffic flow, biological flocking and transport of energy through one-dimensional media. This also provides a natural model for a series of servers processing an infinite fluid workload, where work must pass sequentially through all servers before exiting the system, and where non-leading servers adjust their processing rates in response to their local queue lengths. We refer to this model as the \emph{Stochastic Follow-the-Leader system}.

There is a connection between the Stochastic Follow-the-Leader system and \emph{Hammersley's process}, which was introduced and analyzed in \cite{hammersley1972few,aldous1995hammersley}, and subsequently studied by \cite{krug2000asymmetric,seppalainen2001perturbation,cator2005ham}, among many others, motivated by asymptotic properties of longest increasing subsequences in uniform random permutations of $\{1,\ldots,n\}$ as $n \to \infty$. Several variants of this process have been studied in the literature, and we give a description that is closest to our system. Start with a point process of particles $\mathcal{P}(0)$ on the real line and construct the process $\{\mathcal{P}(t) = (P_j(t) : j \in \mathbb{Z}) \,:\, t \ge 0\}$, with $P_j(t) >P_{j+1}(t)$ for all $j \in \mathbb{Z},\, t \ge 0$, started from $\mathcal{P}(0)$ and updated by sliding a horizontal line upwards through a space-time Poisson point process of unit intensity on $\mathbb{R}^2$ as follows. For $t >0$, if a point $P$ appears on the horizontal line $y=t$ between two successive points $P_i(t-)$ and $P_{i+1}(t-)$ in $\mathcal{P}(t-)$, then $P_{i+1}(t) = P$ and $P_j(t) = P_j(t-)$ for all $j \neq i+1$. This process can be viewed as the Stochastic Follow-the-Leader system with an \emph{infinite} number of particles, indexed by $\mathbb{Z}$ (without a leader), as any particle $P_{j+1}$ jumps forward at rate $P_j - P_{j+1}$ to a uniformly chosen location in $(P_{j+1},P_j)$. By taking a \emph{local weak limit}, as $n\to \infty$, of the particle system viewed around a uniformly chosen particle in our $n$-particle Stochastic Follow-the-Leader system, one obtains the Hammersley's process. Thus the techniques developed in this paper may be useful in analyzing  properties of the Hammersley's process, such as local stability, rates of convergence and hydrodynamic behavior. See additional comments on this in Section \ref{sec:contr} (item 4).
   
\subsection{Questions and challenges}
The central object of study in this work is the long-time behavior of the associated \emph{gap process}, that is, the
collection of inter-particle distances. The evolution of the gap process is Markovian and encodes all information about the
relative configuration of the particles. Several natural and interesting questions arise: Does the gap
process admit a unique stationary distribution, and if so, under what conditions? How rapidly
does the process converge to equilibrium? What is the structure of the invariant law in
special cases, such as when the leader’s jump sizes are Exponential? And how does the
tail behavior of the leader’s increments influence the regularity of the stationary gaps? 

The study of the gap process presents several challenges. Since the jump rate and jump size of each particle behind the leader is proportional to the gap preceding it, this results in a quadratic non-linearity in the dynamics which is highly sensitive to `very large' and `very small' gaps. When the system has very large gaps, the associated particles exhibit atypically high speeds and large jumps. Small gaps result in significant slow-downs, creating `bottlenecks' in the system. Moreover, as the state space is unbounded and uncountable, with the above described non-linearity and singularity in the dynamics, traditional techniques used in mixing-time analysis of Markov chains (e.g. \cite{levin2017markov}) do not readily apply.   

\subsection{Our contributions}
\label{sec:contr}
We introduce novel techniques to address the above challenges and make the following contributions. 
\begin{enumerate}
  \item In Theorem \ref{thm:stat.ex}, we establish the existence and uniqueness of
a stationary distribution for the gap process for every finite system size $n$, using a Lyapunov function argument combined with the weak Feller property.

  \item In Theorem \ref{thm:unif.erg}, we prove uniform geometric ergodicity, showing that convergence to equilibrium occurs at an exponential rate, and the time to reach within a prescribed distance to stationarity is independent of the initial configuration. This is achieved by establishing a minorization condition and combining it with bounds on exponential moments of hitting times of certain ``small" sets. 
  This uniform ergodicity is a manifestation of the quadratic non-linearity in the system: even for initial configurations `far away' from the stationary gap distribution, the particles quickly organize themselves in order to bring their gap configuration close to equilibrium.
  
  \item In the special case where the leader’s jumps are Exponentially distributed with unit mean, we identify in Theorem \ref{thm:stat.exp.char} the stationary law explicitly as a product of independent rate 1 Exponential laws across all gaps. This key factorization property makes it amenable to a more refined long-time behavior study, discussed in item 4 below. 
  
\item In the Exponential leader case, we investigate \emph{mixing times}, that is, the time required for the distribution of the gap process to become close (in total variation distance) to its stationary law. In Theorem \ref{thm:tmix}, we obtain a $\Theta(n)$ lower bound and $O(n(\log n)^2)$ upper bound for the mixing time, which together characterize it up to logarithmic factors. 

The proof combines coupling constructions with variance
bounds and distinguishing statistics in the spirit of modern Markov chain mixing theory (\cite{levin2017markov}). However, natural couplings do not preserve `monotonicity properties' of the system and we need a novel coupling, described in Section \ref{sec:coupling}. This coupling also plays a key role in the proof of Theorem \ref{thm:unif.erg}.
Moreover, the slow-down caused by small gaps requires careful handling of the worst-case scenario – when all particles are stacked in the same position (all gaps are zero) – and deriving estimates for
the time required for a “reasonably-sized” gap to propagate down the chain of particles that allows for “sufficient” movement of particles. This is achieved through coupling with a related particle system that we call the “frozen boundaries” process (see Section \ref{sec:fbp}).

In Corollary \ref{cor:fclt}, we highlight an application of our mixing time estimates by establishing a functional central limit theorem. We show that, starting from an arbitrary configuration,  an appropriately rescaled and re-centered functional of the gap process at a suitably large time converges in distribution to a standard Brownian motion as $n \to \infty$.

Another possible application of our mixing time estimates is in proving hydrodynamic limits for the {\em stick process} of \cite{seppalainen2001perturbation} which can be regarded as the Hammersley's process as seen from the tagged particle.
The stick process admits a one parameter family (indexed by $\lambda>0$) of stationary distributions given as products of iid Exponential distributions with rate $\lambda$ (see Remark \ref{rem:700n} for some comments on this point).
The paper \cite{seppalainen2001perturbation} proves a hydrodynamic limit given by Burger's equation when the initial configuration is a small perturbation of a stationary profile of the stick process for some $\lambda>0$, given in terms of independent Exponential laws for the gaps (i.e. the `sticks' in the stick process), with a suitable centering and space-time scaling. We expect that the techniques developed in obtaining the mixing time results in the current work will be useful for establishing similar hydrodynamic behavior  for more general initial profiles. This will be investigated in future work.

 \item Finally, we extend our analysis to the setting where the leader’s jumps have heavy, but integrable, tails. In this regime, explicit formulae for the stationary law are no longer
available. Nevertheless, we show in Theorem \ref{thm:poly_tails} that if the leader’s increment distribution has a finite
$k^{th}$ moment, then every gap has a finite $(k+1)^{th}$ moment at stationarity. This result exhibits a form of \emph{regularization} where the randomness of the leader’s motion is `smoothed out' by the motion of the followers.
\end{enumerate}

\subsection{Other related works}
This work studies interacting stochastic particle systems in which interactions depend on particles’ relative positions. Such models—termed \emph{particle systems with topological interaction} by \cite{CDGP}—arise in diverse areas including ecology, evolutionary biology, engineering, and mathematical finance. A canonical example is the \emph{Atlas model}, a special case of rank-based diffusions originating in stochastic portfolio theory \cite{ferkar}. In this model, an $n$-dimensional diffusion consists of independent Brownian motions, except that the lowest-ranked particle receives an additional positive drift $\gamma$, inducing an attractive interaction. This mechanism leads to rich long-time behavior in both finite ($n<\infty$) and infinite ($n=\infty)$ dimensions; see, for instance, \cite{palpit, sar, sartsa, DJO, demtsa, banerjee2022domains, IPS, jourdain2008, shkolnikov2012large, jourdain2013propagation, andreis2019ergodicity}.

Related models motivated by evolutionary biology have also been extensively studied \cite{bebrnope, bebrnope2, berzha, bruder, durrem, atar2020hydrodynamics}. For example, \cite{durrem} considers a system in which particles give birth at rate one, with offspring positions sampled from a displacement kernel, and the leftmost particle is removed at each birth. Attractive topological interactions also arise in load-balancing models for queueing systems, where incoming jobs are routed toward shorter queues to maintain balance \cite{mitzenmacher2001power, lu2011join, der2022scalable, banerjee2023load}. Continuous-space pursuit and leader-follower models exhibiting attractive behavior have been analyzed in both probabilistic and physics contexts; see, e.g., \cite{banerjee2016brownian, ben2007toy, hongler2013local}. In parallel, a vast literature studies lattice-based attractive systems such as the voter model, contact process, and exclusion processes \cite{kipnis2013scaling}.

Flocking phenomena have been extensively studied through various models. \emph{Cucker-Smale models} quantify flocking for particle systems with continuous trajectories, where the velocities of particles update via interactions with those of other particles in the swarm \cite{10.1214/18-AAP1400}.
Recently, \emph{flocking for pure-jump models} have been probabilistically studied in \cite{Balzs2011ModelingFA,banerjee2024flockingfastlargejumps}, and by \cite{greenberg1996asynchronous,stolyar2023particle, stolyar2023large} in the context of distributed parallel simulation.  In all these models, particles interact with each other via sufficiently regular functionals of the empirical measure, like mean-based or quantile-based interactions. Such \emph{mean-field interactions} qualitatively differ from the \emph{nearest-neighbor interactions} present in our model, and they require very different techniques. In this sense, the Stochastic Follow-the-Leader system can be viewed as a 
{\em non-mean field, local interaction} model for flocking.

\subsection{Notation}
We denote a $k$-dimensional  vector $(x_1, \ldots x_k) \in \RR^k$ as $\bfx$. 
The set $\{1, \ldots , n\}$ will be denoted as $[n]$.
For a Polish space $\cls$,
$C_b(\cls)$ denotes the class of all real bounded continuous functions on $\cls$.  
The Borel $\sigma$-field on $\cls$ will be denoted as $\clb(\cls)$. The space of probability measures on $(\cls, \clb(\cls))$ will be denoted as $\clp(\cls)$ and equipped with the topology of weak convergence.
For a random variable $X$ with values in a Polish space $\cls$, 
$\mathcal{L}(X)$ denotes the probability law of $X$, and given a sub $\sigma$-field $\clg$, $\mathcal{L}(X \mid \clg)$ will denote the conditional law of $X$ given $\clg$ (which is a $\clg$-measurable $\clp(\cls)$-valued random variable). The statement that a random variable $X$ has the probability law $\theta$ will often be abbreviated as $X\sim \theta$.
For a Polish space $\cls$, we denote by $\mathcal{D}([0,\infty):\cls)$ the space of functions $f: [0,\infty) \rightarrow \cls$ which are right-continuous and have finite left-limits (RCLL) endowed with the usual Skorokhod topology. For a given $T>0$, the space $\mathcal{D}([0,T]:\cls)$ 
is defined similarly. For a function $f\in \cld([0,\infty):\cls)$, we denote its left limit at $t\in[0,\infty)$ as $f(t-)$.
 For two random variables $X$ and $Y$ with values in a Polish space $\cls$, $X\overset{d}{=}Y$ will denote that $X$ and $Y$ have the same probability law.
For $\mu$, $\nu\in \clp(\cls)$, we define the \emph{total variation distance} between them as $$\|\mu-\nu\|_{\text{TV}}:= \sup_{A\in \clb(\cls)}|\mu(A)-\nu(A)|.$$
For probability measures $\mu, \nu \in \clp(\RR)$ we use the notation $\mu \le_d \nu$ to denote that $\mu$ is stochastically dominated by $\nu$.
We will denote by $\mathbb{E}_W$ the expectation with respect to the law of the random variable $W$. Occasionally, if $\cll(W) = \theta$, we write such an expectation as $\mathbb{E}_{\theta}$.
We use $\lceil \cdot \rceil$ (resp. $\lfloor \cdot \rfloor$): $\RR_+ \to \NN_0$ to denote the smallest (resp. largest) integer greater than (resp. smaller than) or equal to a given nonnegative real number. The $n$-dimensional vector $(1, 1, \ldots 1)$ will be denoted as $\one$.
Exp$(\lambda)$ denotes the law of an Exponential random variable with mean $\lambda^{-1}$. 
The continuous uniform distribution over the interval $(a,b)$ will be denoted by U$(a,b)$.

\section{Model Description}\label{sec:model}
\noindent
Consider $n$ particles moving forward on the real line. The leading particle jumps forward at independent Exponentially distributed times with rate 1 and with each jump size being an independent $(0,\infty)$ valued random variable distributed as $\theta$.  The remaining particles also jump at independent Exponential times, with jump rates equal to their distance from the particle immediately ahead of them. Moreover, when any of these particles jump, they move to a random location chosen uniformly from the interval between the particle itself and the one immediately ahead of it.
$\Xbf(t)$ denotes the $n$-dimensional vector of the positions of the particles at time $t$. The particles are labelled in descending order- the leading particle is labelled as $X_1$, followed by $X_2$ and so on. Hence, for every $t\ge0$, $$X_n(t)<X_{n-1}(t)<\dots <X_2(t)<X_1(t).$$ 

In this work, our primary object of interest is the associated $(n-1)$-dimensional gap process. 
We denote the gap between the $i^{th}$ and $(i+1)^{th}$ particle at time $t$ as $$Y_i(t):= X_i(t)-X_{i+1}(t), \; i= 1, 2, \ldots, n-1.$$
Hence the gap process associated with the $n$-particle system is denoted at time $t\ge0$ by $$\Ybf(t) := (Y_1(t),Y_2(t),\dots,Y_{n-1}(t)).$$ 
Note that,  for $i\in [n]\setminus\{1\}$, given the state of the system up to some time $t>0$, the $i^{th}$ particle jumps with a rate of $Y_i(t-)$, from its current location, $X_i(t-)$, to its new location which is distributed as U$(X_i(t-),X_{i-1}(t-))$. Alternatively, we can say that its jump size, conditioned on $\sigma\{\Xbf(s): s < t\}$ and the event that a jump occurs at time instant $t$, is given by a U$(0,Y_{i-1}(t-))$-valued random variable. To keep notation and presentation simple we will drop the `$-$' (e.g. when writing $t-$) and qualifiers on conditional statements, such as `conditioned on the event that a jump occurs' in our informal descriptions when clear from the context.

For the remainder of this section, let $U$ denote a U$(0,1)$ random variable, sampled independently at each jump epoch. According to the dynamics described previously, the leading gap, denoted by $Y_1(t)$, may increase by a random variable distributed as $\theta$ at rate 1 (if the first particle jumps) or alternatively decrease by $Y_1(t)U$ at rate $Y_1(t)$ (if the second particle jumps). 
Note that, the new gap value also has a U$(0,Y_1(t))$ distribution. 

The remaining gaps behave similarly- the $i^{th}$ gap at time $t$, denoted by $Y_i(t)$ for $i\in [n-1]\setminus \{1\}$, can either increase by $Y_{i-1}(t)U$ at a rate of $Y_{i-1}(t)$ or decrease by $Y_{i}(t)U$ at a rate of $Y_{i}(t)$. 

Let $\cll_n$ denote the generator of the $\RR_+^{n-1}$ valued Markov process of the gaps, $\Ybf=(Y_1,Y_2,\dots,Y_{n-1})$, associated with the $n$-particle system. Then, for a bounded measurable map $f:\RR_+^{n-1}\to\RR$, 
\begin{align}\label{gen}
    \cll_nf(\textbf{y})&:= \Emb_U [f(\ybf-y_{n-1}U e_{n-1})-f(\ybf)] y_{n-1}\\
    &\quad +\sum_{i=1}^{n-2} \Emb_U [f(\ybf+y_iU (e_{i+1}-e_{i}))
    -f(\ybf)] y_i + \Emb_\theta[f(\ybf+Z e_1)-f(\ybf)],
\end{align}
where $Z$  denotes a random variable distributed as $\theta$, with cumulative distribution function $F_\theta$, and $U$ is a  U$(0,1)$ random variable, and $\Emb_{\theta}$ (respectively, $\Emb_U$) denotes expectation with respect to $Z$ (respectively, $U$). We will assume throughout that the mean of $\theta$ is finite. The infinite mean case will require some modifications to our techniques, although we expect the general approach to carry over to this case. To reduce parameters in the model, we also assume $\EE_\theta(Z) =1$; the case $\EE_\theta(Z) \neq 1$ can be treated similarly. 

Note that, we can rewrite \eqref{gen} as 
\begin{align}\label{gen.alt}
    \cll_nf(\ybf) &= \int_0^{y_{n-1}} f(\ybf -ue_{n-1})du\; + \sum_{i=1}^{n-2}\int_0^{y_i} f(\ybf+u(e_{i+1}-e_i))du\;\\
    &+\int_0^\infty f(\ybf+ue_1)\; F_\theta (du)\;- f(\ybf)\left(\sum_{i=1}^{n-1}y_i +1 \right).
\end{align}
Finally, let us consider the path space $\Omega=\mathcal{D}([0,\infty):\RR_+^{n-1})$ [resp. $\tilde \Om = \cld([0,\infty);\RR^n])$, $\mathcal{F}$ [resp. $\tilde \clf$] the corresponding Borel $\sigma$-field on $\Omega$ [resp. $\tilde \Om$]. On these two measurable spaces, we denote by $\PP_\ybf$ [resp. $\tilde \PP_\xbf$], the probability measures induced by $\Ybf$ [resp. $\Xbf$] when $\Ybf(0)=\ybf\in \RR_+^{n-1}$ [resp. $\Xbf(0)=\xbf \in \RR^n$]. We further denote by $\PP_\mu$, the probability measure induced by $\Ybf$ on $(\Om,\clf)$ when $\Ybf(0)\sim \mu \in \clp(\RR_+^{n-1})$. Abusing notation, canonical coordinate process on 
$(\Om, \clf)$ (resp. $(\tilde \Om, \tilde \clf)$) will again be denoted as $\Ybf$ (resp. $\Xbf$).
\section{Main Results}\label{sec:results}
Our first result in this paper concerns the stability of the gap process.
\begin{theorem}\label{thm:stat.ex}
    The Markov process of the gaps, $\Ybf = (Y_1, Y_2, ..., Y_{n-1})'$, has a unique stationary distribution $\pi_n$.
\end{theorem}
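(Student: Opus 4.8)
The plan is to combine a Foster--Lyapunov drift bound for $\cll_n$ with the weak Feller property of the gap semigroup: the drift bound gives tightness of the time-averaged occupation measures and hence, via a Krylov--Bogolyubov argument, an invariant probability measure, while the weak Feller property makes the limit invariant; uniqueness is then obtained by promoting the drift condition to positive Harris recurrence once a suitable irreducibility is in hand.

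\emph{The Lyapunov function.} Try $V(\ybf) = \sum_{i=1}^{n-1} c_i y_i$ with the weights to be chosen afterwards. Applying \eqref{gen}: the leader term contributes $c_1\,\Emb_\theta[Z] = c_1$; the jump of particle $i+1$ (for $1\le i\le n-2$), which raises $y_{i+1}$ and lowers $y_i$, contributes $\tfrac12(c_{i+1}-c_i)y_i^2$; and the jump of the last particle contributes $-\tfrac12 c_{n-1}y_{n-1}^2$. Choosing the weights \emph{strictly decreasing}, say $c_i = n-i$, forces every $y_i^2$ to enter with a negative coefficient, and in fact
\[
  \cll_n V(\ybf) = (n-1) - \tfrac12\,|\ybf|^2 .
\]
Hence $\cll_n V \le n$ everywhere and $\cll_n V \le -1$ outside a compact ball $C\subset\RR_+^{n-1}$; the quadratic negative term is precisely what tames the ``large gap'' instability flagged in the introduction. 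Non-explosivity is clear since follower jumps never increase $\sum_i Y_i$, so $\sum_i Y_i(t)\le \sum_i Y_i(0) + \sum_{k=1}^{N(t)}Z_k$ with $N$ a rate-$1$ Poisson process and $Z_k\sim\theta$ iid; Dynkin's formula for the unbounded $V$ is then justified by the standard localization and monotone-convergence argument, using $\cll_n V\le n$.

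\emph{Existence and the weak Feller property.} Since $V\ge 0$, Dynkin's formula gives $\tfrac1t\Emb_\ybf\!\int_0^t|\Ybf(s)|^2\,ds \le 2n + 2V(\ybf)/t$, whence $\tfrac1t\int_0^t\PP_\ybf(|\Ybf(s)|>R)\,ds \le R^{-2}\big(2n+2V(\ybf)/t\big)$, which tends to $0$ as $R\to\infty$ uniformly in $t\ge 1$; so the occupation measures $\mu_t := \tfrac1t\int_0^t\PP_\ybf(\Ybf(s)\in\cdot)\,ds$ are tight. For the weak Feller property, realize $\Ybf$ by thinning a common homogeneous Poisson process with common U$(0,1)$ and $\theta$ marks at the candidate epochs (the domination above bounds the required intensity); as the rates $\ybf\mapsto y_i$ are locally Lipschitz and the jump maps are continuous, trajectories started from nearby points agree in their accepted jump times and sizes up to an error vanishing with the initial distance, on any finite horizon, so $\ybf\mapsto\Emb_\ybf f(\Ybf(t))$ is continuous for $f\in C_b$. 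The usual estimate $\int P_sf\,d\mu_t - \int f\,d\mu_t = O(s\|f\|_\infty/t)$ then shows every weak limit point of $\mu_t$ is an invariant probability measure $\pi_n$.

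\emph{Uniqueness, and the main obstacle.} For uniqueness one shows the gap chain is irreducible with respect to Lebesgue measure on $\RR_+^{n-1}$: because each follower relocates uniformly on its current gap, a cascade of $n-1$ consecutive follower jumps with the leader clock silent (an event of positive probability from any state over a bounded time interval) makes the law of $\Ybf(t)$ absolutely continuous on a nonempty open set, which, after prefixing enough leader jumps, can be positioned around any prescribed target. Weak Feller together with this irreducibility makes compact sets petite, and combined with the drift into $C$ the standard theory of positive Harris recurrent processes yields a \emph{unique} invariant law $\pi_n$. I expect this last step to be the main obstacle: existence via drift plus the Feller property is essentially routine, but extracting enough irreducibility (equivalently, petiteness of compacts) must contend with the degeneracy emphasized in the introduction --- all follower rates vanish near the boundary of $\RR_+^{n-1}$ --- so one has to check carefully that the prescribed jump cascade both occurs with positive probability and generates a genuine density. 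The one mildly clever point in the earlier steps is the choice of strictly decreasing weights, which via telescoping forces every $y_i^2$ into $\cll_n V$ with a negative coefficient.
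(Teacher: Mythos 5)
Your proposal is correct in substance and, for existence, follows the paper's route: your drift computation is right (plugging $V(\ybf)=\sum_i (n-i)y_i$ into \eqref{gen} does give $\cll_n V(\ybf)=(n-1)-\tfrac12\sum_i y_i^2$, the same telescoping trick as the paper's Lyapunov function, which uses geometric weights $\alpha^i$ instead — a choice that matters only later, for bounds uniform in $n$), and existence via the drift bound plus the weak Feller property is exactly the paper's argument. Two comparisons are worth making. First, for weak Feller the paper does not use a thinning construction; it runs the explicit coupling of Section \ref{sec:coupling} and shows via Dynkin's formula that $\EE\sum_i|Y_i(t)-\tilde Y_i(t)|\le\sum_i|y_i-\tilde y_i|$, which gives continuity in probability directly. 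Your sketch can be made rigorous, but the phrase ``trajectories agree up to an error vanishing with the initial distance'' is not literally true: a single discrepant acceptance in the thinning produces an $O(1)$ divergence, so what you actually control is the probability of any discrepancy on $[0,t]$, which suffices for weak Feller but needs to be said. Second, for uniqueness your route is genuinely different from the paper's: the paper does not invoke Harris-recurrence theory at this stage, but simply observes that under its coupling the two systems coalesce front-to-back, so from any pair of initial gap configurations the event that the first $n-1$ jumps are successive coalescence jumps has positive probability, whence the coupling time is finite with positive probability and uniqueness follows from a cited lemma; this is lighter and reuses the coupling already built for weak Feller. Your plan — Lebesgue-irreducibility, petiteness of compacts for a Feller irreducible chain, positive Harris recurrence — is sound and is essentially the machinery the paper deploys only later, for the stronger uniform ergodicity result (the minorization Lemma \ref{lem:minor.cnd}, $\psi$-irreducibility Proposition \ref{prop:irr}, and Down–Meyn–Tweedie); it buys more than uniqueness but requires the careful event construction you yourself flag as the obstacle, and note the small inaccuracy that a cascade of follower jumps with the leader silent does \emph{not} have positive probability from every state (if all gaps vanish, every follower rate is zero), so the leader jumps must be prefixed, exactly as in the paper's construction.
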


The proof is based on the construction of a suitable Lyapunov function. 

The next theorem gives uniform geometric ergodicity for the gap process, $\Ybf$. For this purpose, recall the Markov family $\{ \PP_\ybf\}_{\ybf\in \RR_+^{n-1}}$. For $t\in [0,\infty)$, consider the transition probability kernel of $\{ \PP_\ybf\}_{\ybf\in \RR_+^{n-1}}$ which can be defined as 
\begin{equation}\label{eq:640nn}
\PP^t(\ybf, A):= \PP_\ybf (\Ybf(t)\in A), \;\; t\ge 0,\; \ybf \in \RR_+^{n-1}, \; A\in \clb(\RR_+^{n-1}).
\end{equation}
 
 The following theorem will show that this transition probability kernel converges to the unique stationary distribution in the total variation distance at uniform geometric rates - that is, geometric rates independent of the initial configuration, $\ybf$.
\begin{theorem}\label{thm:unif.erg}
    There exists $\kappa\in (0,\infty)$ and $\beta\in (0,1)$, such that for every $t>0$ the following holds: $$\sup_{\ybf \in \RR_+^{n-1}} \| \PP^t(\ybf,\cdot)-\pi_n\|_\text{TV} \leq \kappa \beta^t.$$
\end{theorem}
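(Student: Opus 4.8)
The plan is to use the standard Meyn--Tweedie / Down--Meyn--Tweedie machinery for geometric ergodicity of continuous-time Markov processes, but to push it to a \emph{uniform} conclusion by exploiting the strong contractive ``pull'' created by the quadratic nonlinearity in the gap dynamics. Concretely, I would establish three ingredients and then invoke a Harris-type theorem. First, a \textbf{uniform drift (Foster--Lyapunov) condition}: I would take a Lyapunov function such as $V(\ybf) = 1 + \sum_{i=1}^{n-1} y_i$ (or possibly an exponential variant $e^{c\sum y_i}$, or the Lyapunov function already constructed for Theorem \ref{thm:stat.ex}), and show $\cll_n V(\ybf) \le -\lambda V(\ybf) + b\,\one_{C}(\ybf)$ for constants $\lambda, b \in (0,\infty)$ and a ``small'' set $C$. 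Using \eqref{gen.alt}, one computes $\cll_n V(\ybf) = \int_0^{y_{n-1}} (-u)\,du + \sum_{i=1}^{n-2}\int_0^{y_i} 0\,du + \int_0^\infty u\,F_\theta(du) = 1 - \tfrac12 y_{n-1}^2$ for $V = 1 + \sum y_i$, since the interior ``gap transfer'' moves conserve $\sum y_i$. The term $-\tfrac12 y_{n-1}^2$ beats any linear multiple of $V$ once $y_{n-1}$ is large, which already gives a drift inequality; but because only the last coordinate provides drift, I expect to need a weighted sum $V(\ybf) = 1 + \sum_{i=1}^{n-1} w_i y_i$ with increasing weights $w_1 < w_2 < \cdots < w_{n-1}$, so that each interior transfer $y_i(e_{i+1}-e_i)$ contributes a negative amount $y_i(w_i - w_{i+1})/2 \cdot$(something) and the net drift is negative outside a bounded set.

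Second, a \textbf{minorization / small-set condition}: I would show that for the set $C$ appearing in the drift condition (a bounded box, or a neighborhood of the origin), there is a time $t_0 > 0$, a constant $\epsilon > 0$, and a probability measure $\nu$ on $\RR_+^{n-1}$ with $\PP^{t_0}(\ybf, \cdot) \ge \epsilon\, \nu(\cdot)$ for all $\ybf \in C$. This is where the paper's novel coupling from Section \ref{sec:coupling} (referenced in contribution 2) enters: starting from any two configurations in $C$, run the coupled dynamics and show that with positive probability, within time $t_0$, the leader makes a jump of a prescribed size and the interior particles rearrange so that \emph{both} copies land in a common configuration — or more weakly, that both transition laws dominate a common minorizing measure. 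A convenient route is to note that the state can be driven near the origin (all gaps small) and then one controlled leader-jump plus the smoothing effect of the uniform interior jumps produces a density component bounded below on a fixed open set.

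Third — and this is the step that upgrades ordinary geometric ergodicity to \emph{uniform} geometric ergodicity — I would establish \textbf{uniform control on the exponential moments of the hitting time of $C$}: there exist $\delta > 0$ and $M < \infty$ with $\sup_{\ybf \in \RR_+^{n-1}} \EE_\ybf[e^{\delta \tau_C}] \le M$, where $\tau_C = \inf\{t \ge 0 : \Ybf(t) \in C\}$. The point is that the drift $-\tfrac12 y_{n-1}^2$ (respectively the weighted analogue) grows \emph{superlinearly}, so from \emph{arbitrarily} far away the process returns to $C$ in a time whose exponential moment is bounded by a constant not depending on the starting point — unlike the usual situation where $\EE_\ybf[e^{\delta\tau_C}] \lesssim V(\ybf)$ blows up as $\ybf \to \infty$. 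Quantitatively, I would compare $\sum y_i(t)$ to a deterministic ODE $\dot{m} = 1 - \tfrac12 (m/(n-1))^2$ (using that $\max_i y_i \ge (\sum y_i)/(n-1)$, so large total mass forces a large individual gap and hence a fast return), obtaining that the return time to $\{\sum y_i \le R\}$ is bounded above, uniformly in the start, by a random variable with a uniformly bounded exponential moment. Combining this uniform hitting-time bound with the minorization on $C$ via a regeneration/renewal argument (coupling at successive visits to $C$ where the minorization fires) yields $\|\PP^t(\ybf,\cdot) - \pi_n\|_{\mathrm{TV}} \le \kappa \beta^t$ with $\kappa, \beta$ independent of $\ybf$, which is exactly the claim.

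The \textbf{main obstacle} I anticipate is the minorization/coupling step on the set $C$: because (as the introduction emphasizes) the natural couplings fail to preserve the monotone ordering of the two chains, one must use the bespoke coupling of Section \ref{sec:coupling} and carefully argue that from any pair of starting points in a bounded box the coupled chains coalesce (or their laws overlap by a fixed amount) within a fixed time — handling in particular the ``bottleneck'' worst case of nearly-stacked particles, where the interior jump rates degenerate to zero and progress down the chain is slow. The ``frozen boundaries process'' comparison (Section \ref{sec:fbp}) should furnish the needed lower bound on how fast a reasonably-sized gap propagates through the chain, making the minorization time $t_0$ explicit (and, for later use, its $n$-dependence controllable).
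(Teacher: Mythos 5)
Your overall architecture --- a Foster--Lyapunov drift for a weighted linear function, a minorization on a compact set, uniform-in-initial-condition exponential moments for the hitting time of that set, and a Harris/Down--Meyn--Tweedie conclusion --- is exactly the paper's. But two of the three ingredients have concrete problems as written. First, the weight monotonicity is backwards: an interior jump moves mass $y_iU$ from gap $i$ to gap $i+1$, so it changes $V(\ybf)=1+\sum_j w_j y_j$ by $y_iU(w_{i+1}-w_i)$ and contributes $\tfrac12 y_i^2(w_{i+1}-w_i)$ to $\cll_n V(\ybf)$; to make this negative you need \emph{decreasing} weights $w_1>w_2>\cdots$ (the paper takes $w_i=\alpha^i$ with $\alpha=10^{-1}$), whereas with your increasing weights every interior term has positive drift and the Lyapunov inequality fails. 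Second, your justification of the uniform exponential hitting-time bound --- comparing $\sum_i y_i$ to $\dot m = 1-\tfrac12\bigl(m/(n-1)\bigr)^2$ via $\max_i y_i \ge (\sum_i y_i)/(n-1)$ --- does not work, because $\cll_n\bigl(\sum_i y_i\bigr) = 1 - \tfrac12 y_{n-1}^2$: only the \emph{last} gap produces negative drift for the unweighted sum, and a configuration with enormous total mass but tiny $y_{n-1}$ has drift close to $+1$. The paper's fix is precisely the weighted function: from $\cll_n V \le \alpha - \tfrac{1-\alpha}{2}\sum_i \alpha^i y_i^2$ and Cauchy--Schwarz one obtains the self-contained superlinear drift $\cll_n V \le \alpha - \alpha'(V-1)^2$, and then a chain of stopping times at the levels $\{V\le 4^i\}$ together with exponential supermartingale estimates yields $\sup_{\ybf}\EE_\ybf e^{\eta\tau_C}<\infty$. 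Your regeneration/renewal conclusion is an acceptable substitute for the paper's final step, which instead sets $V_0(\ybf)=1-\eta^{-1}+\eta^{-1}\EE_\ybf e^{\eta\tau_C}$, notes it is bounded, establishes $\psi$-irreducibility, and invokes the Down--Meyn--Tweedie theorems.

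A further point of divergence: you anticipate that the minorization is the hard step and requires the coupling of Section~\ref{sec:coupling} and the frozen-boundaries comparison. In the paper those tools serve the uniqueness of $\pi_n$ and the mixing-time bounds, not this theorem. The small-set property of an arbitrary compact $C_0$ is proved by a direct event construction: in a first time slice the leader makes $n-1$ jumps of size in $[\lambda,\lambda(1+\tfrac{1}{n-1})]$, and then particles $2,\dots,n$ jump exactly once, sequentially, each landing the gap ahead of it in a prescribed window; this dominates $\epsilon\, c(t)$ times the uniform measure on $[0,\lambda]^{n-1}$, uniformly over $\ybf\in C_0$ and for every $t>0$. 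In particular the ``stacked particles'' bottleneck you worry about does not arise here, since the room created by the leader's jumps propagates down the line by construction; a coupling-based minorization would indeed face that difficulty, which is a reason to prefer the explicit construction.
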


The proof of uniform ergodicity proceeds through establishing a key minorization property and finiteness of exponential moments of a certain hitting time.

For our next few results, we consider the case when the leader's jump sizes follow an Exponential distribution. We prove that, in this case, the stationary distribution of the gaps takes an explicit form and is given by a product of Exponential laws as in the theorem below.
\begin{theorem}\label{thm:stat.exp.char}
Suppose that $\theta$ is the law of the Exponential random variable with rate $1$. Then the law of the stationary distribution $\pi_n$ of $\Ybf$ is given by Exp$(1)^{\otimes (n-1)}$. 
\end{theorem}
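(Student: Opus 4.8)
The plan is to verify directly that the product measure $\pi := \mathrm{Exp}(1)^{\otimes(n-1)}$, with density $\prod_{i=1}^{n-1} e^{-y_i}$ on $\RR_+^{n-1}$, is a stationary distribution for the gap process $\Ybf$; since Theorem~\ref{thm:stat.ex} already supplies uniqueness, this is enough to conclude $\pi_n = \pi$. Concretely, I would show that $\int_{\RR_+^{n-1}} \cll_n f(\ybf)\,\pi(d\ybf) = 0$ for $f$ ranging over a class of test functions (bounded continuous, or of Laplace-transform type $e^{-\sum_i \lambda_i y_i}$) that is both measure-determining and a core for the generator of $\Ybf$, and then invoke the standard fact that this forces $\pi$ to be invariant. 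The finiteness of the mean of $\theta$, together with the non-explosivity already used in the proof of Theorem~\ref{thm:stat.ex}, ensures $\Ybf$ is a well-posed non-explosive pure-jump Markov process; moreover $\EE_\pi[Y_i] = 1$ for each $i$, which is exactly what keeps the rate-weighted integrals appearing below finite and legitimizes the Fubini interchanges.

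The computation I would organize along the $n$ jump types in \eqref{gen}, each dispatched by a single change of variables. For the leader term $\Emb_\theta[f(\ybf + Ze_1) - f(\ybf)]$ with $Z\sim\mathrm{Exp}(1)$, substituting $w = y_1 + z$ and then swapping the $w$- and $y_1$-integrals produces a factor $\int_0^w dy_1 = w$, so this term contributes $\int (y_1 - 1) f\,d\pi$. For the term indexed by $i\in\{1,\dots,n-2\}$, I first write $y_i\,\Emb_U[f(\ybf + y_iU(e_{i+1}-e_i))] = \int_0^{y_i} f(\ybf + v(e_{i+1}-e_i))\,dv$ via $v = y_iU$, then change variables $(a,b) := (y_i - v,\, y_{i+1}+v)$ in the $(y_i,y_{i+1},v)$-integral: this map has unit Jacobian, sends $e^{-y_i-y_{i+1}}$ to $e^{-a-b}$, and leaves $v$ ranging over $(0,b)$, so the inner $v$-integral yields a factor $b$ and the term contributes $\int (y_{i+1} - y_i) f\,d\pi$. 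For the final term $y_{n-1}\,\Emb_U[f(\ybf - y_{n-1}Ue_{n-1}) - f(\ybf)]$, substituting $c = y_{n-1}(1-U)$ and using $\int_c^\infty e^{-y_{n-1}}\,dy_{n-1} = e^{-c}$ gives a contribution of $\int (1 - y_{n-1}) f\,d\pi$. Summing,
$$\int \cll_n f\,d\pi \;=\; \int_{\RR_+^{n-1}} \Big[ (y_1-1) + \sum_{i=1}^{n-2}(y_{i+1}-y_i) + (1 - y_{n-1}) \Big] f(\ybf)\,\pi(d\ybf) \;=\; 0,$$
since the bracket telescopes to $0$ pointwise. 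Hence $\pi$ solves the stationarity equation and $\pi_n = \pi$.

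I expect the algebraic part to be short; the care goes into the functional-analytic packaging, which is the main obstacle. The generator $\cll_n$ in \eqref{gen} is written for bounded $f$, but $\cll_n f$ is in general unbounded since the jump rates $y_i$ are, so one must be precise about which $f$ are admissible and about the passage from ``$\int \cll_n f\,d\pi = 0$ on the test class'' to ``$\pi$ invariant.'' I would discharge this either through the Dynkin/martingale-problem formulation — using non-explosivity and $\EE_\pi[Y_i]=1$ to control $\int_0^t \EE_\pi|\cll_n f(\Ybf(s))|\,ds$ — or, more economically, by first establishing the identity for exponential test functions $f(\ybf) = e^{-\sum_i \lambda_i y_i}$, which directly pins down the one-dimensional time marginals of $\Ybf$. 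As a side remark, the telescoping makes transparent the mechanism behind the product form: a gap in $\mathrm{Exp}(1)$-stationarity feeds the next gap, in the aggregate, with unit-rate $\mathrm{Exp}(1)$ increments — a Burke-type phenomenon — which dovetails with the feed-forward structure of the dynamics (gap $1$ is autonomous, and gap $i$ affects only gaps $i+1, i+2, \dots$).
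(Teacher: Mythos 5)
Your proposal is correct and follows essentially the same route as the paper: the paper also verifies the product $\mathrm{Exp}(1)$ density against the dynamics via term-by-term changes of variables with the same telescoping cancellation — packaged as computing the adjoint $\cll_n^*$ on compactly supported test functions and checking $\cll_n^*\Pi_n=0$ — and then invokes the uniqueness from Theorem~\ref{thm:stat.ex}. The only difference is presentational (you integrate $\cll_n f$ directly against $\pi$ and flag the unbounded-rate/test-function issues explicitly, which the paper treats lightly), not a difference in the underlying argument.
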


The proof relies on showing that,  the density corresponding to $\pi_n$ defined above solves $\cll_n^*\pi_n=0$, where $\cll_n^*$ is the adjoint of $\cll_n$ (see \eqref{gen.alt}).
\begin{remark}\label{rem:700n}
For $\lambda>0$, let $\hat{\Xbf}^{\lambda}(t) := \Xbf(\lambda^{-1}t)/\lambda$ and $\hat{\Ybf}^{\lambda}(t):= \Ybf(\lambda^{-1}t)/\lambda$. Suppose that, as in the above theorem, $\theta$ is the law of the Exponential random variable with rate $1$.
Then, by a simple scaling argument,  it is easy to verify that the Markov process $\hat{\Ybf}^\lambda$ has the unique stationary distribution Exp$(\lambda)^{\otimes (n-1)}$. Furthermore, the law of $\hat{\Xbf}^\lambda$ is the same as that of a variant of the original unscaled process where the leader takes Exp$(\lambda)$ jumps at rate $\lambda^{-1}$ and the subsequent particles behave exactly as in the original system. This observation provides insight into the mechanism underlying the one-parameter family of stationary distributions of the stick process described in Section \ref{sec:contr} (item 4).
\end{remark}

For the next result we again consider the setting where $\theta$ is Exp$(1)$. Our goal is to quantify the time required for the law at time 
$t$ of the gap process, started from an arbitrary initial distribution, to be suitably close to its stationary distribution. For this purpose, we work with the total variation distance between probability measures in $\clp(\RR_+^{n-1})$. For a given initial configuration (distribution) of the gaps, $\mu \in \clp(\RR_+^{n-1})$, let us denote the corresponding transition kernel for $t\in [0,\infty)$, analogous to \eqref{eq:640nn}  as $$\PP^t(\mu, A):= \PP_\mu(\Ybf(t)\in A),\;\; t\ge 0,\; \mu\in \clp(\RR_+^{n-1}),\; A\in \clb(\RR_+^{n-1}).$$  Define
\begin{equation}\label{eq:913n}
d(t):= \sup_{\mu \in \clp(\RR_+^{n-1})} \left\|\PP^t(\mu,\cdot)-\pi_n\right\|_\text{TV}\end{equation} and the \emph{mixing time} of the process $$t_\text{mix} (\eps) := \inf \{ t\geq 0: d(t)\leq \eps\}, \quad t_\text{mix}:= t_\text{mix}\left(\frac{1}{4}\right).$$ 
Using the contractive property of the total variation distance, we have for $l>0$, 
\begin{equation}\label{eq:911n}
d(lt_\text{mix})\le 2^{-l},
\end{equation}
and consequently, $$t_\text{mix}(\eps)\le \lceil\log_2 \eps^{-1}\rceil t_\text{mix}.$$
See  \cite[Equation 4.33-4.34]{levin2017markov}.
We now give upper and lower bounds for $t_\text{mix}$ in terms of the size of the particle system, $n$. 

\begin{theorem}\label{thm:tmix}
    Suppose that $\theta$ is Exp$(1)$. Then, there exist constants $c_1$, $c_2>0$, which are independent of $n$, such that the mixing time $t_\text{mix}$ of the gap process, $\Ybf$,  satisfies $$c_1 n\le t_\text{mix}\le c_2 n(\log n)^2.$$
\end{theorem}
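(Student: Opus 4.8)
### Proof strategy for Theorem \ref{thm:tmix}

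\textbf{Overall plan.} The plan is to treat the upper and lower bounds separately, relying crucially on the explicit product-form stationary law from Theorem \ref{thm:stat.exp.char}. For the lower bound $t_{\mathrm{mix}} \geq c_1 n$, I would use the standard \emph{distinguishing statistic} method (see \cite{levin2017markov}, Ch.~7): pick a linear functional of the gap vector whose value under stationarity is $\Theta(n)$ with $O(\sqrt n)$ fluctuations, show that started from the worst-case configuration (all particles stacked, $\ybf = \zero$) this statistic needs time of order $n$ merely to reach the right order of magnitude, and then use Chebyshev to separate the time-$t$ law from $\pi_n$ in total variation for $t = cn$ with $c$ small. The natural choice is $S(\ybf) = \sum_{i=1}^{n-1} y_i = X_1 - X_n$, the distance from leader to last particle; under $\pi_n = \mathrm{Exp}(1)^{\otimes(n-1)}$ its mean is $n-1$ and variance is $n-1$. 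Started from $\zero$, the only source of ``mass'' in the system is the leader's jumps, which arrive at rate $1$ with unit mean, so $\mathbb{E}_{\zero}[S(\Ybf(t))] = t$ exactly (each leader jump of size $Z$ adds $Z$ to $Y_1$ and hence to $S$, while all other transitions move mass between coordinates without changing the sum — one checks this directly from \eqref{gen}). Thus at time $t = n/2$ we have $\mathbb{E}_{\zero}[S] = n/2$ while $\mathbb{E}_{\pi_n}[S] = n-1$, a gap of order $n$; controlling $\mathrm{Var}_{\zero}(S(\Ybf(t)))$ to be $O(n)$ (again $S$ is essentially a compound Poisson sum plus bounded-variance redistribution terms, so this is a routine second-moment computation) gives via Chebyshev that $\|\PP^{n/2}(\zero,\cdot) - \pi_n\|_{\mathrm{TV}} \geq \tfrac34$ for $n$ large, whence $t_{\mathrm{mix}} \geq c_1 n$.

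\textbf{Upper bound.} For $t_{\mathrm{mix}} \leq c_2 n (\log n)^2$ the idea is a coupling argument using the novel coupling of Section \ref{sec:coupling} together with the ``frozen boundaries'' process of Section \ref{sec:fbp}. The strategy: couple two copies of the gap process, one started from an arbitrary $\ybf$ (or arbitrary $\mu$) and one started from stationarity $\pi_n$, and bound the coupling time. Because of the product form of $\pi_n$, a stationary copy has each gap an independent $\mathrm{Exp}(1)$, so with overwhelming probability every gap is between (say) $n^{-2}$ and $2\log n$. The coupling should proceed coordinate-by-coordinate from the leader inward: once the first $k$ gaps of the two copies have coalesced, one waits for the discrepancy in the $(k+1)$-st gap to be absorbed. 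The key quantitative input is an estimate for how long it takes a ``reasonably-sized'' gap to propagate down the chain — this is exactly what the frozen-boundaries comparison provides: by freezing the boundary gaps one obtains a monotone, more tractable system that lower-bounds the movement, and one shows that each stage of propagation costs $O(n \log n)$ or so in expectation (the extra $\log n$ handling the worst-case small-gap bottleneck and the $n$ counting the number of particles the information must traverse), while the union over the $\Theta(\log n)$-many ``bad'' scales and a further $\log n$ from tail bounds / concentration of the coupling time yields the $n (\log n)^2$ rate. One then invokes \eqref{eq:911n}–type contraction if needed to upgrade an $O(1)$-probability coupling success to the $\tfrac14$ threshold.

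\textbf{Main obstacle.} The genuinely hard part is the upper bound, specifically the analysis of the worst-case initial configuration $\ybf = \zero$ (all particles coincident), where the dynamics are most degenerate: every gap is zero, so every jump rate is zero and nothing moves at all except the leader, and the quadratic nonlinearity means small gaps create self-reinforcing bottlenecks. One must show that, despite this, a macroscopic gap structure forms and propagates through all $n$ particles on the time scale $n(\log n)^2$. Controlling this requires (i) a robust lower bound on particle motion via the frozen-boundaries process — the delicate point being that freezing boundaries must genuinely under-estimate the true dynamics in a stochastic-domination sense compatible with the coupling, which is non-obvious because naive couplings break the monotonicity of the system (this is precisely why the special coupling of Section \ref{sec:coupling} is needed), and (ii) a careful sequential/telescoping estimate for the time for the $(k+1)$-st gap to equilibrate given the first $k$ have, uniform in $k$, so that summing over $k = 1, \dots, n-1$ produces only a polylogarithmic overhead beyond the linear-in-$n$ cost. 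Estimating the time for a small gap to ``recover'' to a reasonable size — where the rate is itself proportional to the tiny gap — and showing these recovery times do not accumulate to more than $(\log n)^2$ per particle on average is the technical crux; everything else (the lower bound, the reduction via \eqref{eq:911n}, the concentration estimates) is comparatively standard.
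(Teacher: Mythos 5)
Your overall architecture matches the paper's (distinguishing statistic for the lower bound; coupling with a stationary copy, sequential front-to-back coalescence, and a frozen-boundaries comparison for the upper bound), but the quantitative bookkeeping in the upper bound has a genuine gap. To get $O(n(\log n)^2)$ from a sequential coalescence of the $n-1$ coordinates, the cost per coordinate must be polylogarithmic, whereas your accounting assigns each stage a cost of order $n\log n$, which would only yield $O(n^2\log n)$ overall. The missing idea is \emph{how} stationarity is exploited: since the stationary copy has iid Exp$(1)$ gaps, with high probability, uniformly over polynomially many integer times, every window of $\lfloor r\log n\rfloor$ consecutive gaps contains a gap of size at least $1$ (the paper's good event $A_{n,r}^c$). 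Hence, when the $k$-th coordinate is to be coalesced, a gap of size at least $1$ sits at most $O(\log n)$ positions ahead, and the frozen-boundaries estimate $\EE^*\beta(m)\le cm^2$ (Lemma \ref{lemma.froz.bd}, via the domination in Lemma \ref{lemma.dom} and Corollary \ref{cor:xi}) is applied with $m=O(\log n)$, giving a per-stage cost of $O((\log n)^2)$ and a total of $O(n(\log n)^2)$. Your proposed good event --- all gaps of the stationary copy lying in $[n^{-2}, 2\log n]$ --- is neither useful (a lower bound of $n^{-2}$ on a gap only gives coalescence rate $n^{-2}$, i.e. time of order $n^2$ per coordinate) nor maintainable with the required uniformity over the whole $\Theta(n(\log n)^2)$ time horizon; the paper's argument deliberately never needs a lower bound on individual gaps, only the presence of one $O(1)$ gap per $O(\log n)$-window at integer times.

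Your lower bound is essentially the paper's argument, but two steps are glossed in ways that do not hold as stated. First, $\EE_{\zero}[\phi(\Ybf(t))]=t$ is not exact: jumps of the last particle decrease the sum, and indeed $\cll_n\phi(\ybf)=1-\tfrac12 y_{n-1}^2$; the resulting inequality $\EE_{\zero}[\phi(\Ybf(t))]\le t$ is, however, in the right direction. Second, and more substantively, the claim that $\text{Var}_{\zero}(\phi(\Ybf(n/2)))=O(n)$ is a ``routine second-moment computation'' is not justified: the easy bounds (e.g. $\phi(\Ybf(t))\le X_1(t)-X_1(0)$, a compound Poisson increment) only give $O(t^2)$, and a genuine $O(t)$ variance bound would require controlling the fluctuations of $X_n(t)$ from the fully degenerate start, which is far from routine. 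The paper sidesteps this entirely: it takes $t=\eps n$ with $\eps$ small and initial distributions with mean total gap at most $(1-\delta)(n-1)$ for $\delta$ near $1$, so that the crude variance bound of order $\eps^2 n^2+(1-\delta)^2n^2$ is already negligible against the squared mean separation $(\delta-\eps)^2 n^2$. With that modification (small $\eps$ rather than $t=n/2$, and no sharp variance claim) your lower-bound sketch goes through and coincides with the paper's proof.
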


To establish the lower bound, we use ideas from \cite[Section 7.3]{levin2017markov}. This approach relies on finding an appropriate {\em distinguishing statistic}, namely a map $\phi: \RR_+^{n-1} \to \RR$, such that the distance between the law of $\phi(\Ybf(t))$ and that of $\phi$ under stationarity can be bounded from below for a nonempty  collection of initial conditions, $ \mathcal{A}_n \subseteq \clp(\RR_+^{n-1})$.  Since, $d(t)$ is defined as the worst-case total variation distance (supremum over all $\mu$ in $\clp(\RR_+^{n-1})$), this will provide a lower bound on $t_\text{mix}$. 

In order to obtain an upper bound, we construct a coupling $(\Ybf_1,\Ybf_2)$ of the gap processes 
such that one of the two processes (say $\Ybf_2$) is started from stationarity - that is, $\Ybf_2(0) \sim \pi_n$, while the other ($\Ybf_1$) has an initial configuration with an arbitrary law $\mu$ in $\clp(\RR_+^{n-1})$. We denote this coupling
as $\Pbf_{\mu,\pi_n}$.
Namely, $\Pbf_{\mu,\pi_n}$ is the probability measure on $\cld([0,\infty): \RR_+^{2(n-1)})$ such that $\Pbf_{\mu,\pi_n}(\Ybf_1 \in \cdot) = \Pbf_{\mu}(\Ybf \in \cdot)$ and $\Pbf_{\mu,\pi_n}(\Ybf_2 \in \cdot) = \Pbf_{\pi_n}(\Ybf \in \cdot)$, where $\Ybf_1, \Ybf_2$ are $(n-1)$-dimensional coordinate processes on $\cld([0,\infty): \RR_+^{2(n-1)})$.
Define the \emph{coupling time} of $(\Ybf_1,\Ybf_2)$, denoted as $\tau_\text{coup}$, as 
\begin{equation}\label{eq:352n}
\tau_\text{coup}:= \inf \{ t\ge 0: \Ybf_1(s)=\Ybf_2(s) \text{ for all } s \ge t\}.\end{equation}
One can upper bound $d(t)$ using bounds on the tail probability of $\tau_\text{coup}$ (see \cite[Corollary 5.5]{levin2017markov}). Specifically, we use the fact that 
\begin{equation}\label{eq:tmix.up}
    d(t)\le \sup_{\mu \in \clp(\RR_+^{n-1})} \Pbf_{\mu,\pi_n}  \{ \tau_\text{coup}>t\}.
\end{equation}
Combining these two ideas provides us with the stated bounds on $t_\text{mix}$ in Theorem \ref{thm:tmix}. 

As a corollary of the above result and the observation made in Remark \ref{rem:700n} we have the following.  Recall the scaled processes (with $\lambda=n$)
$\hat{\Xbf}^n$ and $\hat{\Ybf}^n$ from Remark \ref{rem:700n}. 
\begin{corollary}\label{cor:fclt}
Let $t_n \in \RR_+$ be such that
{$\frac{t_n}{n^2(\log n)^2} :=\alpha_n \to \infty$} as $n\to \infty$. For $n \in \NN$, fix $\mu_n \in \clp(\RR_+^{n-1})$. Suppose that $X_1(0) =0$ and
$\bfY(0)$ is distributed as $\mu_n$. For $x \in [0,1]$, define
$$U^n(x):= \sqrt{n}\left(\hat X^n_1(t_n) - \hat X^n_{\lfloor nx\rfloor}(t_n) - x\right), \; x \in [0,1].$$
Then $U^n$ converges in distribution, in $\mathcal{D}([0,1]:\RR)$, to a standard Brownian motion.
\end{corollary}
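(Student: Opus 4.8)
Proof proposal for Corollary~\ref{cor:fclt}.

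The plan is to reduce the statement to Donsker's invariance principle for i.i.d.\ sums, using the mixing-time bound of Theorem~\ref{thm:tmix} to replace the (arbitrary) law of the gap vector at the large time $t_n$ by its stationary law at a cost that vanishes in total variation. The starting observation is an algebraic one: since $\hat X^n_1(t_n)-\hat X^n_k(t_n)=n^{-1}\sum_{i=1}^{k-1}Y_i(n^{-1}t_n)=\sum_{i=1}^{k-1}\hat Y^n_i(t_n)$ for $1\le k\le n$ (so the leader's absolute position, hence $X_1(0)=0$, is irrelevant), we have
$$U^n(x)=\sqrt n\Big(\sum_{i=1}^{\lfloor nx\rfloor-1}\hat Y^n_i(t_n)-x\Big),\qquad x\in[0,1],$$
with the empty sum read as $0$ when $\lfloor nx\rfloor\le 1$. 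Thus $U^n=\psi_n\big(\hat{\Ybf}^n(t_n)\big)$, where $\psi_n:\RR_+^{n-1}\to\mathcal{D}([0,1]:\RR)$ is the measurable map $\psi_n(\ybf)(x):=\sqrt n\big(\sum_{i=1}^{\lfloor nx\rfloor-1}y_i-x\big)$, whose image indeed lies in $\mathcal{D}([0,1]:\RR)$ (a step function with jumps at multiples of $1/n$ plus the continuous function $-\sqrt n\,x$). It therefore suffices to control the law of $\hat{\Ybf}^n(t_n)$ and push it forward through $\psi_n$.

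By Remark~\ref{rem:700n} (with $\lambda=n$), $\hat{\Ybf}^n$ is the gap process $\Ybf$ run at speed $n^{-1}$ and rescaled by $n^{-1}$ in space, with stationary law $\text{Exp}(n)^{\otimes(n-1)}$. Since total variation is preserved under the scaling bijection $\ybf\mapsto\ybf/n$, and $\Ybf(0)\sim\mu_n$, it follows from \eqref{eq:913n} and Theorem~\ref{thm:stat.exp.char} that
$$\big\|\mathcal{L}\big(\hat{\Ybf}^n(t_n)\big)-\text{Exp}(n)^{\otimes(n-1)}\big\|_\text{TV}\le d\big(n^{-1}t_n\big).$$
Now $n^{-1}t_n=\alpha_n\,n(\log n)^2$ with $\alpha_n\to\infty$, whereas Theorem~\ref{thm:tmix} gives $t_\text{mix}\le c_2 n(\log n)^2$. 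As $d(\cdot)$ is non-increasing (by the contraction of total variation under Markov kernels), and $\lfloor\alpha_n/c_2\rfloor\,t_\text{mix}\le \alpha_n n(\log n)^2=n^{-1}t_n$ while \eqref{eq:911n} gives $d\big(\lfloor\alpha_n/c_2\rfloor t_\text{mix}\big)\le 2^{-\lfloor\alpha_n/c_2\rfloor}$, we conclude $d(n^{-1}t_n)\le 2^{-\lfloor\alpha_n/c_2\rfloor}\to 0$ as $n\to\infty$. Choosing (e.g.\ via a maximal coupling) random vectors $\Zbf_n\sim\mathcal{L}(\hat{\Ybf}^n(t_n))$ and $\Wbf_n\sim\text{Exp}(n)^{\otimes(n-1)}$ on a common probability space with $\PP(\Zbf_n\ne\Wbf_n)\to 0$, we get $\psi_n(\Zbf_n)=\psi_n(\Wbf_n)$ with probability tending to $1$; since $U^n\overset{d}{=}\psi_n(\Zbf_n)$, it is enough to show $\psi_n(\Wbf_n)\Rightarrow B$, a standard Brownian motion on $[0,1]$, and then apply the ``converging together'' lemma.

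For the last step, write $\Wbf_n=(E_1/n,\dots,E_{n-1}/n)$ with $E_1,E_2,\dots$ i.i.d.\ $\text{Exp}(1)$ (mean $1$, variance $1$). Then
$$\psi_n(\Wbf_n)(x)=\frac1{\sqrt n}\sum_{i=1}^{\lfloor nx\rfloor-1}(E_i-1)+\frac{\lfloor nx\rfloor-1-nx}{\sqrt n},$$
where the last term is bounded by $2/\sqrt n$ uniformly in $x$, and the partial-sum process $x\mapsto n^{-1/2}\sum_{i=1}^{\lfloor nx\rfloor-1}(E_i-1)$ differs from the classical Donsker process $x\mapsto n^{-1/2}\sum_{i=1}^{\lfloor nx\rfloor}(E_i-1)$ by at most $n^{-1/2}\max_{1\le k\le n}|E_k-1|=O_P(n^{-1/2}\log n)$, uniformly in $x$. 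Donsker's invariance principle yields $x\mapsto n^{-1/2}\sum_{i=1}^{\lfloor nx\rfloor}(E_i-1)\Rightarrow B$ in $\mathcal{D}([0,1]:\RR)$, so $\psi_n(\Wbf_n)\Rightarrow B$ and hence $U^n\Rightarrow B$, as claimed. (The variance-$x$ covariance structure of the limit is precisely what the $\sqrt n$ scaling together with $\hat Y_i^n\sim\text{Exp}(n)$, i.e.\ variance $n^{-2}$, produces.)

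The only genuinely substantive step is the second one: we need the total-variation distance to stationarity to be $o(1)$ at time $t_n$, \emph{uniformly over the arbitrary, $n$-dependent} initial law $\mu_n$, and this is exactly what the $O(n(\log n)^2)$ upper bound of Theorem~\ref{thm:tmix} delivers after the time change of Remark~\ref{rem:700n}; the hypothesis $t_n/(n^2(\log n)^2)\to\infty$ is calibrated precisely so that $n^{-1}t_n$ dominates every fixed multiple of $t_\text{mix}$ for large $n$. The remaining ingredients—the identity of Step~1 and the Donsker/converging-together argument of Step~3—are routine, the only mild care being that the state dimension $n-1$ grows with $n$, which is harmless since total variation does not increase under the pushforward by $\psi_n$.
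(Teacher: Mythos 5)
Your proposal is correct and follows essentially the same route as the paper: express $U^n$ as a function of the gap vector at scaled time $n^{-1}t_n$, use the $O(n(\log n)^2)$ bound of Theorem \ref{thm:tmix} together with \eqref{eq:911n} and monotonicity of $d(\cdot)$ to make the law at that time total-variation close to $\pi_n=\mathrm{Exp}(1)^{\otimes(n-1)}$, and invoke Donsker's theorem for the iid Exponential partial sums. The only cosmetic difference is that the paper transfers the weak convergence via the bounded-Lipschitz metric and a triangle inequality, whereas you use a maximal coupling and the converging-together lemma; these are interchangeable.
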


Our final result concerns the case where the leader's jump sizes have a power law distribution. For this setting, we show that the gaps have lighter tails than that of the leader jump distribution, quantified by the finiteness of a higher moment. This result can be seen as a form of `regularization' under the stochastic follow-the-leader dynamics.
\begin{theorem}\label{thm:poly_tails}
    Suppose that, for some $k\ge 1$, $\EE_\theta Z^k<\infty$. Then, under  $\pi_n$, the $i^{th}$  gap, $Y_i(\cdot)$, for $i = 1, \ldots, n-1$, satisfies for any $t\ge 0$: 
    $$\EE_{\pi_n}[Y_i(t)]^{k+1}<\infty,$$ where $\EE_{\pi_n}$ denotes the expectation under the probability measure  $\PP_{\pi_n}$ and the latter measure is as introduced below \eqref{gen.alt}.
\end{theorem}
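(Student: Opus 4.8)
The plan is to exploit stationarity via the generator identity $\EE_{\pi_n}[\cll_n f] = 0$ applied to suitable (truncated) test functions of the form $f(\ybf) = \sum_i y_i^{k+1}$, and to run an induction over the particle index $i = 1, \dots, n-1$. For the leading gap $Y_1$, stationarity forces a balance between the upward jumps (which increase $Y_1$ by an $\theta$-distributed amount at rate $1$, contributing a term of order $\EE_\theta Z^{k+1}$... but wait — we only assume $\EE_\theta Z^k < \infty$, so the $(k+1)$-th moment of the increment may be infinite) and the downward jumps (which decrease $Y_1$ by $Y_1 U$ at rate $Y_1$, giving a negative drift term of order $Y_1^{k+2}$). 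The key structural point is that the downward rate is itself \emph{proportional to $Y_1$}, so the stabilizing term in $\cll_n(y_1^{k+1})$ scales like $-c\, y_1^{k+2}$, i.e. it is one power stronger than the destabilizing input; this is exactly the mechanism that buys the extra moment. Concretely, $\cll_n(y_1^{k+1})$ evaluated and integrated against $\pi_n$ yields, after computing the uniform-jump expectations (which produce explicit binomial-type constants), an inequality of the shape $c_1 \EE_{\pi_n} Y_1^{k+2} \le c_2 \EE_{\pi_n} Y_1^{k+1} + \EE_\theta Z^{k+1}$ — but since the RHS may have $\EE_\theta Z^{k+1} = \infty$, I instead work with $f(y_1) = y_1^{k+1}$ and observe that the leader's increment contributes $\EE_\theta[(y_1 + Z)^{k+1} - y_1^{k+1}]$, which for the exponent $k+1$ need not be integrable; so the correct target is to first show $\EE_{\pi_n} Y_1^{k+1} < \infty$ (not $k+2$) using the test function $y_1^{k}$ — there the leader term is $\EE_\theta[(y_1+Z)^k - y_1^k] \le C(y_1^{k-1} + \EE_\theta Z^k)$, which \emph{is} controlled, while the downward term gives $-c\, y_1^{k+1}$. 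Thus the generator identity with test function $y_1^k$, together with a truncation/localization argument to justify $\EE_{\pi_n}[\cll_n f] = 0$ for this unbounded $f$, delivers $\EE_{\pi_n} Y_1^{k+1} < \infty$.

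The induction step is then: assuming $\EE_{\pi_n} Y_{i-1}^{k+1} < \infty$, show $\EE_{\pi_n} Y_i^{k+1} < \infty$. For $i \ge 2$ the $i$-th gap increases by $Y_{i-1}U$ at rate $Y_{i-1}$ (input term of order $Y_{i-1}^{k+1}$, now finite by the inductive hypothesis) and decreases by $Y_i U$ at rate $Y_i$ (stabilizing term of order $-c\, Y_i^{k+1}$). Applying $\EE_{\pi_n}[\cll_n f] = 0$ with $f(\ybf) = y_i^k$ and computing the uniform expectations gives a bound of the form $c_1 \EE_{\pi_n} Y_i^{k+1} \le c_2 \EE_{\pi_n} Y_{i-1}^{k+1} + c_3 \EE_{\pi_n} Y_i^k + c_4$, and then using $\EE_{\pi_n} Y_i^k \le 1 + \EE_{\pi_n} Y_i^{k+1}$-type interpolation (or a separate lower-order induction already giving $\EE_{\pi_n} Y_i^k < \infty$, bootstrapping from $k=1$ — note the mean of each gap is already known finite, e.g. trivially in the Exponential case and in general by the $k=1$ instance of this very argument) closes the recursion with constants that may grow in $n$ but are finite for each fixed $n$, which is all that is claimed. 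Finally, the passage from "finite $(k+1)$-th moment under $\pi_n$ as a measure on $\RR_+^{n-1}$" to "$\EE_{\pi_n}[Y_i(t)]^{k+1} < \infty$ for all $t \ge 0$" is immediate from stationarity: $\Ybf(t) \sim \pi_n$ for every $t$ when $\Ybf(0) \sim \pi_n$.

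The main obstacle I anticipate is the justification of the generator identity $\EE_{\pi_n}[\cll_n f] = 0$ for the \emph{unbounded} test functions $y_i^k$, since the stated generator \eqref{gen.alt} and the definition of $\pi_n$ a priori only guarantee this for bounded measurable $f$. The standard fix is a truncation argument: apply the identity to $f_M(\ybf) = \phi_M(y_i)$ where $\phi_M$ is a smooth bounded function agreeing with $y_i^k$ on $[0,M]$ and flattening beyond, obtain a uniform-in-$M$ bound on $\EE_{\pi_n}[Y_i^{k+1} \wedge (\text{something})]$ from the negative drift dominating at large $y_i$, and let $M \to \infty$ via monotone convergence; one must check that the "error" terms created by the truncation (from the jump operator carrying mass across the truncation threshold) are sign-definite or vanish in the limit — this is where care is needed because the uniform jumps can move $y_i$ both up and down. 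A clean way to handle the upward-unbounded leader increment in the $i=1$ case is to separately truncate the $\theta$-increment as well and use $\EE_\theta Z^k < \infty$ to control the resulting remainder. Once this localization bookkeeping is done, the algebra of the binomial expansions of $(y+yu)^m$ and $(y - yu)^m$ integrated over $u \in (0,1)$ is routine and produces the explicit constants $c_1 = \frac{1}{m+1} \cdot \frac{1}{\text{(something)}}$ etc.; I would not belabor it.
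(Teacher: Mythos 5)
Your plan follows essentially the same route as the paper: apply the generator to the test function $y_i^k$ (whose uniform down-jump produces the stabilizing $-\tfrac{k}{k+1}y_i^{k+1}$ term, while the leader's input only needs $\EE_\theta Z^k<\infty$), justify the stationarity identity for this unbounded function by truncation/localization and a Fatou argument, and induct over $i$ using finiteness of $\EE_{\pi_n}Y_{i-1}^{k+1}$. The only cosmetic difference is bookkeeping in the induction step: the paper absorbs the cross terms with a weighted Young inequality to obtain the clean recursion $\EE_{\pi_n}[Y_i(t)]^{k+1}\le \frac{h_k}{a_k}\EE_{\pi_n}[Y_{i-1}(t)]^{k+1}$, instead of carrying an extra $\EE_{\pi_n}Y_i^k$ term handled by a lower-order bootstrap as you suggest.
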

The proof relies on constructing suitable Lyapunov functions that quantify integrability properties of $\pi_n.$ 

%
\subsection{Organization}
The remainder of the paper is organized as follows.
In Section~\ref{sec:coupling} we introduce a coupling for the $n$-particle system, which serves as a key technical tool at multiple stages in the rest of the paper. Section~\ref{sec:stability} establishes the existence of a unique stationary distribution for the gap process (Theorem \ref{thm:stat.ex}) and proves uniform ergodicity (Theorem \ref{thm:unif.erg}). In
Section~\ref{sec:exp-case}, we analyze the Exponential leader case, obtaining the product
form stationary law (Theorem \ref{thm:stat.exp.char});  the mixing time bounds (Theorem \ref{thm:tmix}); and the functional limit theorem under a spatial scaling and large particle limit (Corollary \ref{cor:fclt}). Finally, Section~\ref{sec:heavy-tail} considers the
heavy-tailed case and derives the finiteness of higher moments for the gaps (Theorem \ref{thm:poly_tails}). 

\section{Coupling Construction}\label{sec:coupling}
\noindent In this section, we will introduce a coupling for the $n$-particle system. This will also automatically provide us with a coupling for the corresponding gap process. The ideas involved in the construction of the coupling and its properties discussed below will be used at multiple stages in the rest of the work.

Consider the coupling $(\textbf{X},\tilde \bfX)$ of $n$-particle systems started from $(\xbf,\tilde \xbf)$ described as follows. Since we are interested in the corresponding gap process for the particle system, we will only consider the case where $x_1=\tilde x_1$, namely,
the leading particles $X_1$ and $\tilde X_1$ start from the same position. Under our coupling, the leading particles evolve together using the same jump times and jump sizes, thus their locations will be equal at all time.

The dynamics of the remaining particles in the two systems is as follows. At any given time, consider the $(i+1)^{th}$ particle in each system, which jumps according to the size of the gap in front of it, namely, the $i^{th}$ gap. Recall that, the larger the gap, the higher the jump rate of the particle (faster). We introduce a coupling such that whenever the slower one of the two particles jumps, the faster one jumps as well. Moreover the jump sizes are such that the $i^{th}$ gaps now become equal. We refer to a jump of this type as a \emph{coalescence jump}. Besides this, the faster particle can make additional jumps - but the construction of the coupling ensures that it remains the \emph{faster} particle after the jump. That is, the larger one of the $i^{th}$ gaps remains larger until the coalescence jump takes place. We note that under this coupling, even if the $(i+1)^{th}$ particles undergo a coalescence jump, it does not ensure that they are now at the same position (since the $i$-th particle in the two systems may be at different location), or that the $i^{th}$ gaps will remain the same at all subsequent times (since the forward jumps of the $i$-th particle in the two systems may break this equality). However these two properties will be ensured if all the $i$ particles ahead of them have already coalesced, namely they are at the same locations in the two systems (and consequently would have the same states at all future times under the described coupling). Thus the system coalesces sequentially from the front to the back.  

We now give a precise description of the coupling construction. Let us consider, at any given time, the $(i+1)^{th}$ particles, $X_{i+1}(\cdot)$ and $\tilde X_{i+1}(\cdot)$, and denote the corresponding gaps (in front of them) by $Y_i(\cdot):= X_i(\cdot)-X_{i+1}(\cdot)$ and  $\tilde Y_i(\cdot):= \tilde X_i(\cdot)- \tilde X_{i+1}(\cdot)$. As stated earlier, we refer to the particle with the smaller (resp. larger) gap in front of it as the slower (resp. faster) particle, that is, if $Y_i(\cdot)<\tilde Y_i(\cdot)$, then $X_{i+1}(\cdot)$ is the slower particle and vice versa. Let us denote $m_i(\cdot)=Y_i(\cdot) \wedge \tilde Y_i(\cdot)$ and $n_i(\cdot)=Y_i(\cdot) \vee \tilde Y_i(\cdot)-Y_i(\cdot) \wedge \tilde Y_i(\cdot)$. We must have that the faster particle jumps at rate $m_i(\cdot) + n_i(\cdot)$, while the slower particle jumps at rate $m_i(\cdot)$. We devise the coupling in the following manner. At time $t$, the  jump for the  $(i+1)$-th pair of particles in the two systems occurs at rate $m_i(t)+n_i(t)$. The jump is one of the following two types:

\begin{enumerate}
    \item With probability $\frac{n_i(t)}{m_i(t)+n_i(t)}$, the faster particle jumps with the jump length distribution, U$(0,n_i(t))$, while the slower particle does not jump.
    \item \label{coup.jump} With probability $\frac{m_i(t)}{m_i(t)+n_i(t)}$, the faster particle jumps by an amount denoted by the random variable $U^*$, which is distributed as U$(n_i(t),m_i(t)+n_i(t))$. Furthermore, in this case, the slower particle simultaneously jumps by $U^*-n_i(t)$.
\end{enumerate}

Before we proceed, let us first establish that the scheme presented above ensures that $(\Xbf, \tilde \Xbf)$ serves as a valid coupling for the $n$-particle system described by the Markov family $\{\tilde{\PP}_\xbf\}_{\xbf\in \RR^{n}}$. For this purpose, it suffices to check that the marginal jump rates and jump size distributions of the particles in each system in the above coupling construction  match those in the particle system of interest. Again, consider the $(i+1)^{th}$ particles, $X_{i+1}(\cdot)$ and $\tilde X_{i+1}(\cdot)$, at a given time $t$. Without loss of generality, let us assume that, $Y_i(t)<\tilde Y_i(t)$ - that is, $X_{i+1}$ (resp. $\tilde X_{i+1}$) is the slower (resp. faster) particle at instant $t$ (Again, here and below, for simplicity of notation, we write $t$ in place of $t-$
and suppress the explicit conditioning events). Thus, we have $m_i(t)=Y_i(t)$ and $n_i(t)=\tilde Y_i(t)-Y_i(t).$ Hence, under the coupled dynamics described above, the jump rate of $X_{i+1}(t)$ is given by $\frac{m_i(t)}{m_i(t)+n_i(t)}(m_i(t)+n_i(t))=Y_i(t)$ and that of $\tilde X_{i+1}(t)$ is $m_i(t)+n_i(t)=\tilde Y_i(t)$. The jump size at instant $t$ (conditional on a jump occurring) of $X_{i+1}(\cdot)$ is given by $U^*-n_i(t)$ which is clearly distributed as U$(0, m_i(t))\text{, or equivalently }$U$(0,Y_i(t))$. Finally, the jump size of $\tilde X_{i+1}(\cdot)$, which we denote by $J$, can be described by $\chi U'+(1-\chi)U^* \sim \text{U}(0, \tilde Y_i(t))$, where $U'$ is a U$(0,n_i(t))$-valued random variable, $\chi$ is a Bernoulli random variable with probability of success given by $\frac{n_i(t)}{m_i(t)+n_i(t)}$, and $\chi, U', U^*$ are mutually independent. This verifies that under the above coupling construction $\bfX$ (resp. $\tilde{\bfX}$) has the distribution $\tilde{\PP}_\xbf$ (resp. $\tilde{\PP}_{\tilde{\xbf}}$).

Now, we will make some important observations about the coupling. First, note that, when the $i^{th}$ gaps in the two processes become equal, $X_{i+1}$ and $\tilde X_{i+1}$ jump together and by the same length which is given by a uniform random variable over the length of the gap. As explained before, we refer to a jump of type \ref{coup.jump} as a \emph{coalescence jump}. We say that the $(i+1)^{th}$ particles have \emph{coalesced} when for every $j\in [i]$, $X_j$ and $\tilde X_j$ are at the same position and then $X_{i+1}$ and $\tilde X_{i+1}$ undergo a coalescence jump. Note that after such a jump,
for every $j\in [i+1]$, $X_j$ and $\tilde X_j$ are the same at all future times.


Now, with $(\textbf{X},\tilde \bfX)$ constructed using the above coupling, consider the Markov process of the gaps, $(\Ybf, \tilde \Ybf)$, with initial values $(\Ybf(0), \tilde \Ybf(0))=(\ybf,\tilde \ybf)\in \RR_+^{2(n-1)}$. 
Since $(\xbf, \tilde \xbf)$ was arbitrary (other than $x_1= \tilde x_1$), $(\ybf, \tilde \ybf)$ can take any value in $\RR_+^{2(n-1)}$. 
We denote the corresponding joint law of $(\Ybf, \tilde \Ybf)$ as $\PP_{\ybf,\tilde \ybf}$ and the expectation as $\EE_{\ybf, \tilde \ybf}$. 
Note that the Markov process $(\Ybf, \tilde \Ybf)$ has the following generator: for bounded measurable $f:\RR_+^{2(n-1)}\to \RR$, 
\begin{multline}\label{coup.gen}
    \cll_n^{(\Ybf, \tilde \Ybf)} f(\ybf,\tilde \ybf) = \EE_\theta \left[ f(\ybf + Ze_1, \tilde \ybf +Ze_1) -  f(\ybf, \tilde \ybf) \right] \\
    + \sum_{i=2}^{n-2} y_i^* \EE_U \left[f(\ybf+y_i^* U (e_{i+1}-e_i)\textbf{1}_{y_i>\tilde y_i}, \tilde \ybf +y_i^* U (e_{i+1}-e_i)\textbf{1}_{y_i\leq \tilde y_i}) -  f(\ybf, \tilde \ybf) \right] \\
    + \sum_{i=2}^{n-2} y_i' \EE_U \left[f(\ybf+ (y_i^* \textbf{1}_{y_i>\tilde y_i} + y_i'U) (e_{i+1}-e_i), \tilde \ybf + (y_i^*  \textbf{1}_{y_i\leq \tilde y_i} + y_i'U )(e_{i+1}-e_i))-  f(\ybf, \tilde \ybf) \right] \\
     + y_{n-1}^* \EE_U \left[f(\ybf-y_{n-1}^* U e_{n-1}\textbf{1}_{y_{n-1}>\tilde y_{n-1}}, \tilde \ybf - y_{n-1}^* U e_{n-1}\textbf{1}_{y_{n-1}\leq \tilde y_{n-1}}) -  f(\ybf, \tilde \ybf) \right] \\
    + y_{n-1}' \EE_U \left[f(\ybf-(y_{n-1}^* \textbf{1}_{y_{n-1}  >\tilde y_{n-1}} + y_{n-1} U')e_{n-1}, \tilde \ybf - (y_{n-1}^*\textbf{1}_{y_{n-1}\leq \tilde y_{n-1}} + y_{n-1} U') e_{n-1}) -  f(\ybf, \tilde \ybf) \right], 
\end{multline}
where $y_i':= y_i \wedge \tilde y_i$, $y_i^* := (y_i \vee \tilde y_i) - y_i'$, $Z\sim \theta$ and $U\sim$U$(0,1)$. In the above formula, the first line captures the synchronous jump of the leading particles in the two systems; lines two and four the jumps of the faster particles; and lines three and five the coalescence jumps of the two particles. Note that each jump of a  particle, that is not the leader or the last, changes the gap process in two coordinates which leads to the  terms $e_{i+1}-e_i$ in the expression above.

\section{Stability of the Gap Process}\label{sec:stability}
In this section, we  will prove Theorems \ref{thm:stat.ex} and \ref{thm:unif.erg}. The first theorem shows that the gap process, $\Ybf$, has a unique stationary distribution, while the latter shows that the law of the process converges to stationarity,   in the total variation distance at a geometric rate, uniformly in the initial condition.
\subsection{Stationary Distribution - Existence and Uniqueness}
In this section we will prove the existence of a unique stationary distribution of the Markov family $\{\PP_\ybf\}_{\ybf \in \RR_+^{n-1}}$ (Theorem \ref{thm:stat.ex}). We begin with   the following result on the weak Feller property of   the gap process. Recall that a Markov process $\{\Zbf(t):t\ge 0\}$ on a Polish space $\cls$ is said to satisfy the weak Feller property if, for any $f\in C_b(\cls)$ and any $t \ge 0$, the function $\bfz \mapsto \EE_{\bfz}\left(f(\bfZ(t))\right)$ (with $\EE_{\bfz}$ denoting expectation under which  $\bfZ(0) = \bfz$ a.s.) is in $C_b(\cls)$.
Recall the coupling introduced in the last section for which the joint law of the $(n-1)$-dimensional gap process $(\Ybf, \tilde \Ybf)$ of the $n$-particle systems with $\mathbf{\Ybf}^n(0) = \bfy$ and $\tilde \Ybf^n(0) = \tilde \bfy$, was denoted by $\PP_{\bfy, \tilde \bfy}$.
\begin{lemma}\label{lem:wF}
Fix $n \in \NN$. 
For any $t \ge 0$, $\delta>0$ and $\bfy \in \RR_+^{n-1}$,
$$
\lim_{\tilde \bfy \rightarrow \bfy}\PP_{\bfy,\tilde \bfy}\left(\sum_{i=1}^{n-1} |Y^n_i(t) - \tilde Y^n_i(t)| \ge \delta\right)=0.
$$
In particular, the gap process of the $n$-particle system has the weak Feller property.
\end{lemma}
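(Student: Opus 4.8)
The plan is to use the coupling $(\Xbf,\tilde\Xbf)$ of Section~\ref{sec:coupling}, started from positions with $x_1=\tilde x_1=0$ (harmless, since only the gaps are relevant) and initial gaps $\bfy,\tilde\bfy$, so that on the coupling space $(\Ybf,\tilde\Ybf)$ has law $\PP_{\bfy,\tilde\bfy}$ with marginals $\PP_\bfy$ and $\PP_{\tilde\bfy}$. Writing $d_i(s):=Y_i(s)-\tilde Y_i(s)$ and $D_i:=\sup_{s\le t}|d_i(s)|$, I would show by induction on $i$ that $D_i\to 0$ in $\PP_{\bfy,\tilde\bfy}$-probability as $\tilde\bfy\to\bfy$; the displayed limit and the weak Feller property then follow immediately.

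\emph{Base case.} The coordinate $Y_1$ changes only when the leader jumps (raising $Y_1$) or when the second particle jumps (lowering it). Under the coupling the two leaders jump at the same times with identical increments, so a leader jump leaves $d_1$ unchanged; and every jump of the second pair --- a coalescence jump, which sets $d_1=0$, or an extra jump of the faster particle --- only decreases $|d_1|$, since the coupling keeps the larger of $Y_1,\tilde Y_1$ larger until coalescence. Hence $s\mapsto|d_1(s)|$ is nonincreasing and $D_1\le|y_1-\tilde y_1|\to 0$.

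\emph{Inductive step.} Fix $i$ and suppose $D_i\to 0$ in probability. The coordinate $Y_{i+1}$ is affected only by jumps of the $(i+1)$-th particles (``front'' perturbations) and of the $(i+2)$-th particles (``back'' perturbations). Exactly as in the base case, back perturbations only decrease $|d_{i+1}|$. For a front perturbation, inspecting the two jump types in the coupling --- a coalescence jump of the $(i+1)$-pair, in which the faster particle's increment exceeds the slower particle's by exactly $n_i(\cdot)=|Y_i(\cdot)-\tilde Y_i(\cdot)|$, and an extra jump of the faster particle, of size at most $n_i(\cdot)$ --- shows that $d_{i+1}$ changes by at most $n_i(\cdot)\le D_i$ in magnitude. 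Since between front perturbations $|d_{i+1}|$ does not increase, the triangle inequality yields
\begin{equation*}
D_{i+1}\ \le\ |y_{i+1}-\tilde y_{i+1}|\ +\ M_{i+1}(t)\,D_i ,
\end{equation*}
where $M_{i+1}(t)$ is the number of jump events of the $(i+1)$-pair in $[0,t]$, occurring at instantaneous rate $\max(Y_i(s),\tilde Y_i(s))$. A crude first-moment estimate controls $M_{i+1}(t)$: since $\sum_{j=1}^{n-1}Y_j(s)=X_1(s)-X_n(s)$, with $X_1$ increasing by mean-one increments at rate one and $X_n$ nondecreasing, one gets $\EE_{\bfy,\tilde\bfy}[\sum_j Y_j(s)]\le\sum_j y_j+s$ and likewise for $\tilde\bfy$, hence $\EE_{\bfy,\tilde\bfy}[M_{i+1}(t)]\le C_n(\bfy,t)$ uniformly for $\tilde\bfy$ in a bounded neighborhood of $\bfy$. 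Thus $M_{i+1}(t)$ is tight as $\tilde\bfy\to\bfy$, so the product $M_{i+1}(t)\,D_i\to 0$ in probability, giving $D_{i+1}\to 0$ in probability.

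\emph{Conclusion.} Summing, $\sum_{i=1}^{n-1}|Y^n_i(t)-\tilde Y^n_i(t)|\le\sum_{i=1}^{n-1}D_i\to 0$ in $\PP_{\bfy,\tilde\bfy}$-probability, which is the first assertion. For the weak Feller property, fix $f\in C_b(\RR_+^{n-1})$: since $\tilde\Ybf(t)\to\Ybf(t)$ in probability under $\PP_{\bfy,\tilde\bfy}$, continuity of $f$ and bounded convergence give $\EE_{\tilde\bfy}f(\Ybf(t))=\EE_{\bfy,\tilde\bfy}f(\tilde\Ybf(t))\to\EE_{\bfy,\tilde\bfy}f(\Ybf(t))=\EE_{\bfy}f(\Ybf(t))$ as $\tilde\bfy\to\bfy$, while $|\EE_\bfy f(\Ybf(t))|\le\|f\|_\infty$; hence $\bfy\mapsto\EE_\bfy f(\Ybf(t))$ lies in $C_b(\RR_+^{n-1})$. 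The main obstacle is the inductive step: carefully bookkeeping the per-jump effect on $d_{i+1}$, verifying that back perturbations are always contractive (precisely the monotonicity the coupling was designed to enforce), and supplying the integrability --- where the finiteness of the mean of $\theta$ enters --- needed to make $M_{i+1}(t)$ tight.
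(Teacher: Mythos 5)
Your argument is correct, but it takes a genuinely different route from the paper. You work pathwise with the coupling, proving by induction on the coordinate index that $D_i=\sup_{s\le t}|Y_i(s)-\tilde Y_i(s)|\to 0$ in probability: the monotonicity built into the coupling makes ``back'' jumps contractive for $|d_{i+1}|$, each ``front'' jump perturbs $d_{i+1}$ by at most $n_i\le D_i$, and the number of front jumps $M_{i+1}(t)$ is controlled in first moment via $\EE\big[\sum_j Y_j(s)\big]\le \sum_j y_j+s$ (this is where $\EE_\theta Z<\infty$ enters), so $M_{i+1}(t)D_i\to 0$ in probability. The paper instead argues globally: it applies the coupled generator \eqref{coup.gen} to the single function $f(\xbf,\tilde\xbf)=\sum_i|x_i-\tilde x_i|$, verifies term by term that $\cll_n^{(\Ybf,\tilde\Ybf)}f\le 0$, and then uses Dynkin's formula (localized at the time the leader exceeds level $M$) to get $\EE_{\bfy,\tilde\bfy}\sum_i|Y_i(t\wedge\tau_M)-\tilde Y_i(t\wedge\tau_M)|\le\sum_i|y_i-\tilde y_i|$, finishing with Markov's inequality. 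The paper's route buys a clean quantitative contraction (the expected $\ell^1$ discrepancy never exceeds its initial value), whereas your induction is more elementary and makes the mechanism of the coupling (front/back bookkeeping) explicit, at the cost of a multiplicative jump-count factor and a non-quantitative conclusion. Two small points to tidy: the bound $\EE M_{i+1}(t)\le\EE\int_0^t(Y_i(s)+\tilde Y_i(s))\,ds$ should be justified via the compensator of the jump-counting process (or a localization argument), and the final ``bounded convergence'' step is really a Slutsky/weak-convergence argument (convergence in probability of $\tilde\Ybf(t)-\Ybf(t)$ to $0$ plus the fixed law of $\Ybf(t)$ gives $\EE f(\tilde\Ybf(t))\to\EE f(\Ybf(t))$ for $f\in C_b$), not literal dominated convergence of an a.s.\ convergent sequence; both are routine fixes.
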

\begin{proof}
We will evaluate the generator in \eqref{coup.gen} applied to $f:\RR_+^{2(n-1)}\to \RR$ given by 
\begin{equation}\label{eq:1237n}
f(\xbf, \tilde \xbf )=\sum_{i=1}^{n-1} \left| x_i - \tilde x_i\right|, \; (\xbf, \tilde \xbf )\in \RR_+^{2(n-1)}.
\end{equation}

We will handle the five terms on the right-hand side of \eqref{coup.gen} (with the $y$'s there replaced by $x$'s) one by one. Clearly, the first term, given by $\EE_\theta \left[ f(\xbf + Ze_1, \tilde \xbf +Ze_1) -  f(\xbf, \tilde \xbf) \right]$ is 0. 

Now, for the second term, consider the $i^{th}$ component in the summation. For the $i^{th}$ component, after canceling the unchanged quantities, we are left with 
\begin{align*}
    x_i^*\Big[\EE_U|x_i-x_i^* U \textbf{1}_{x_i>\tilde x_i}-\tilde x_i + x_i^* U \textbf{1}_{x_i\leq\tilde x_i}|+\EE_U|x_{i+1}+x_i^* U \textbf{1}_{x_i>\tilde x_i}&-\tilde x_{i+1} - x_i^* U \textbf{1}_{x_i\leq\tilde x_i}|\\
    &-|x_i-\tilde x_i|-|x_{i+1}-\tilde x_{i+1}|\Big].
\end{align*}
Observe that, in the above expression, in the term inside $[\cdot]$, the difference between the first and third terms is $-x_i^*\EE_U(U)=-\frac{x_i^*}{2}$, while the difference between the second and the fourth terms is at most $x_i^*\EE_U(U)=\frac{x_i^*}{2}$. This makes the above expression non-positive. Thus, we can conclude that for our choice of $f$ the second term in \eqref{coup.gen} is non-positive, namely, $$\sum_{i=2}^{n-2} x_i^* \EE_U \left[f(\xbf+x_i^* U (e_{i+1}-e_i)\textbf{1}_{x_i>\tilde x_i}, \tilde \xbf +x_i^* U (e_{i+1}-e_i)\textbf{1}_{x_i\leq \tilde x_i}) -  f(\xbf, \tilde \xbf) \right]\leq 0.$$ 
Moving on to the third term in \eqref{coup.gen} and looking at the $i^{th}$ component of the sum, we use similar cancellation as before to see that it equals
\begin{multline}
    x_i'\Big[\EE_U|x_i-x_i^*  \textbf{1}_{x_i>\tilde x_i}- x_i' U-\tilde x_i  + x_i^* \textbf{1}_{x_i\leq\tilde x_i} +  x_i' U|\\
    +\EE_U|x_{i+1} +x_i^* \textbf{1}_{x_i>\tilde x_i}+ x_i' U-\tilde x_{i+1} - x_i^*  \textbf{1}_{x_i\leq\tilde x_i} - x_i' U|-|x_i-\tilde x_i|-|x_{i+1}-\tilde x_{i+1}|\Big].\label{eq:1232n}
\end{multline}
In the above, the first term in the expression inside $[\cdot]$ equals $0$, i.e., 
$$\EE_U|x_i-x_i^*  \textbf{1}_{x_i>\tilde x_i}- x_i' U-\tilde x_i  + x_i^* \textbf{1}_{x_i\leq\tilde x_i} +  x_i' U|=0.$$ For the second term note that, $\EE_U|x_{i+1}+x_i^* \textbf{1}_{x_i>\tilde x_i}+ x_i' U-\tilde x_{i+1} - x_i^*  \textbf{1}_{x_i\leq\tilde x_i} - x_i' U|$,  can be upper bounded by $|x_{i+1}-\tilde x_{i+1}|+x_i^*$. Combining this with the observation $x_i^*=|x_i-\tilde x_i|$ we see that the second term, and thus the expression in \eqref{eq:1232n}, is non-positive. Hence, with the above choice of $f$, the third term on the right side of \eqref{coup.gen} is non-positive as well, i.e. $$\sum_{i=2}^{n-2} x_i' \EE_U \left[f(\xbf+ (x_i^* \textbf{1}_{x_i>\tilde x_i} + x_i'U) (e_{i+1}-e_i), \tilde \xbf + (x_i^*  \textbf{1}_{x_i\leq \tilde x_i} + x_i'U )(e_{i+1}-e_i))-  f(\xbf, \tilde \xbf) \right] \leq 0.$$
The fourth term in \eqref{coup.gen}, can be simplified to the following expression $$x_{n-1}^* [|x_{n-1}-\tilde x_{n-1}|-x_{n-1}^*\EE_U(U)-|x_{n-1}-\tilde x_{n-1}|]= -\frac{(x_{n-1}^*)^2}{2}\leq0.$$
Finally the fifth term in \eqref{coup.gen} reduces to
$$x_{n-1}' [-|x_{n-1} -\tilde x_{n-1}|] \leq  0.$$ 
Combining these observations, we  see that, with $f$ as in \eqref{eq:1237n},
$$\cll_n^{(\Ybf, \tilde \Ybf)} f(\xbf, \tilde \xbf )\leq 0 \ \text{ for all } \ (\xbf, \tilde \xbf )\in \RR_+^{2(n-1)}.$$
We can assume without loss of generality that in the coupled system, the leading particle starts at $0$ and note that under our coupling $X_1(t) = \tilde X_1(t)$ for all $t$. We note that, for $M<\infty$, on the set $\{X_1(t) \le M\}$, $\{\bfY(s), \tilde{\bfY}(s),  0\le s \le t\}$ take values in a bounded set, $\PP_{\bfy,\tilde \bfy}$ a.s. Letting $\tau_M = \inf\{t \ge 0: X_1(t) \ge M\}$, we have by Dynkin's formula
\begin{align}
\EE_{\bfy,\tilde \bfy}\left(\sum_{i=1}^{n-1} \left|Y_i^n(t \wedge \tau_M) - \tilde Y_i^n(t\wedge \tau_M)\right|\right) 
\le 
\sum_{i=1}^{n-1} |y_i - \tilde y_i|.\label{eq:150nn}
\end{align}
In obtaining the above inequality we have used the fact that the  carr\'{e}-du-champ operator associated with $\cll_n^{(\Ybf, \tilde \Ybf)}$ on the function $f$ is bounded on $\{(\bfY(s), \tilde{\bfY}(s)): 0\le s < \tau_M\}$ and thus the local martingale in the Dynkin's formula is in fact a martingale.

Thus, we conclude that, for every $t\geq 0$,
\begin{align*}
\PP_{\bfy,\tilde \bfy}\left(\sum_{i=1}^{n-1} \left|Y_i^n(t) - \tilde Y_i^n(t)\right| \ge \delta\right) &\le \PP_{\bfy,\tilde \bfy}\left(\sum_{i=1}^{n-1} \left|Y_i^n(t) - \tilde Y_i^n(t)\right| \ge \delta, \; X_1(t) \ge M\right)\\
&\quad +
\delta^{-1}\EE_{\bfy,\tilde \bfy}\left(\sum_{i=1}^{n-1} \left|Y_i^n(t\wedge \tau_M) - \tilde Y_i^n(t\wedge \tau_M)\right|\right) \\
&\le 
\PP_{\bfy,\tilde \bfy}(X_1(t) \ge M)+
\delta^{-1}\sum_{i=1}^{n-1} |y_i - \tilde y_i|,
\end{align*}
where the first inequality uses Markov's inequality.
The first statement in the lemma is now immediate on sending $M\to \infty$ and $\tilde{\bfy} \to \bfy$. The second statement in the lemma is a straightforward consequence of the first.
\end{proof}
We now prove the existence and uniqueness of stationary distributions for the gap process $\Ybf.$
\subsubsection{Proof of Theorem \ref{thm:stat.ex}}
 The proof is based on constructing an appropriate Lyapunov function. For $i\in [n-1]$,  consider the functions, $q_i:\RR_+^{n-1}\to \RR$ defined as $q_i(\textbf{y})=y_i$, $\bfy \in \RR_+^{n-1}$. 
Using \eqref{gen}, we get, for $\bfy \in \RR_+^{n-1}$,
\begin{equation}\label{eq:q1}
    \cll_n q_1(\textbf{y}) = \Emb_\theta(Z) + y_1\Emb_U(-y_1 U)  = 1-\frac{y_1^2}{2},
\end{equation}
where $Z\sim \theta$ and $U\sim \text{U}(0,1)$. 

Further, for $i\in [n-1]\setminus \{1\}$, 
\begin{align}\label{eq:qi}
    \cll_n q_i(\textbf{y})&= y_{i-1}\Emb_U(y_{i-1}U)+y_{i}\Emb_U(-y_{i}U) = \frac{y_{i-1}^2-y_i^2}{2}.
\end{align}
Now, let us consider the {\em Lyapunov function}, $V:\RR_+^{n-1}\to \RR_+$ defined as 
\begin{equation}\label{eq:lyapdef}
V(\textbf{y})=1+\sum_{i=1}^{n-1} \alpha^i y_i =1+\sum_{i=1}^{n-1} \alpha^i q_i(\ybf), \; 
\bfy \in \RR_+^{n-1},
\end{equation}
where we take $\alpha= 10^{-1}.$ 
Applying the generator $\cll_n$ to $ V$, we get, \begin{align*}
    \cll_n V(\ybf)&=\alpha \left(1- \frac{y_1^2}{2} \right) +\sum_{i=2}^{n-1} \alpha^i \left( \frac{y_{i-1}^2-y_i^2}{2}\right) \\
    &= \alpha-\sum_{i=1}^{n-2} \left( \frac{\alpha^i-\alpha^{i+1}}{2}\right)y_i^2 - \frac{\alpha^{n-1}}{2}y_{n-1}^2\\
    &= \alpha-\sum_{i=1}^{n-1}\left( \frac{\alpha^i-\alpha^{i+1}}{2}\right)y_i^2-\frac{\alpha^n}{2}y_{n-1}^2.
\end{align*}
Hence, we  conclude that, \begin{equation}\label{drift1}
    \cll_n V(\ybf)\leq \alpha - \left( \frac{1-\alpha}{2}\right)\sum_{i=1}^{n-1}\alpha^i y_i^2.
\end{equation}%
Defining the compact set $K:= \{ \ybf\in \RR^{n-1}: \sum_{i=1}^{n-1} \alpha^iy_i^2 \leq \frac{4\alpha}{1-\alpha}\}$, we have 
\begin{equation}\label{eq:111n}
\cll_nV(\ybf)\leq - \alpha + 2\alpha\mathbf{1}_K(\ybf), \;\; y\in \RR_+^{n-1}.
\end{equation}
This, along with the weak Feller property of the Markov process $\Ybf(\cdot)$ deduced in Lemma \ref{lem:wF}, says that there must be at least one  stationary distribution $\pi_n$ for the process (cf.  \cite[Corollary 1.18]{eberle2015markov}). 

Next, we argue that the stationary distribution $\pi_n$ is unique. It suffices to construct a coupling $(\bfY, \tilde \bfY)$ of the Markov process under consideration, with law denoted by $\PP_{\bfy,\tilde \bfy}$ when started from $(\bfy, \tilde \bfy)$, such that the associated coupling time $\tau_\text{coup}$ defined in \eqref{eq:352n} satisfies $\PP_{\bfy,\tilde \bfy}(\tau_\text{coup} < \infty)>0$ for all $(\bfy, \tilde \bfy) \in \RR_+^{2(n-1)}$. See  \cite[Lemma 6.2]{banerjee2024flockingfastlargejumps} for a proof of this fact (the measurability condition required in the latter result will be immediate from the construction).

We recall the coupling constructed in Section \ref{sec:coupling} and the fact that the coupling for the particle systems given by ($\Xbf, \tilde \Xbf$) naturally produces a coupling for the gap process, which we denote by $(\Ybf, \tilde \Ybf)$. Observe that the construction of the coupling ensures that for $i \ge 2$, if the particles $X_j$ are all coalesced with $\tilde X_j$ for $j \le i-1$, and the slower particle in the pair $(X_i, \tilde X_i)$ jumps, then $X_i$ and $\tilde X_i$ jump to the same location. Therefore, starting from the pair $(X_2,\tilde X_2)$ (recall that $(X_1,\tilde X_1)$ start and move together) and coalescing the successive particles sequentially ensures that the coupling time for the processes is finite with positive probability. To state it more precisely, for the described coupling $(\Xbf, \tilde \Xbf)$, consider the event where the first $(n-1)$ jumps of the $2n$-dimensional process are given, in order, by a coalescence jump for $(X_2,\tilde X_2)$ followed by a coalescence jump for $(X_3, \tilde X_3)$ and so on. Then, it is clear that after the first $(n-1)$ jumps, the states of $\Xbf$ and $\tilde \Xbf$ (resp. $\Ybf$ and $\tilde \Ybf$) become the same and stay equal at all future times. As this event happens with positive probability, we conclude that $\PP_{\bfy,\tilde \bfy}(\tau_\text{coup} < \infty)>0$. The uniqueness of $\pi_n$ follows.
\hfill
\qedsymbol

\subsection{Uniform Ergodicity}
To establish uniform ergodicity, we will require two key ingredients. First, a \emph{minorization property} is derived which provides a suitable lower bound on the transition probabilities of the gap process on any nonempty compact  set $C_0$. Next, we obtain bounds on the \emph{exponential moments} of the hitting times of a particular choice of a compact set $C$, \emph{uniformly in the initial configuration}. The latter will be accomplished by making use of the drift property established in \eqref{drift1}. The set $C$ we choose
is defined in terms of the Lyapunov function introduced in \eqref{eq:lyapdef}, as
\begin{equation}\label{eq:628n}
C := \{ \ybf \in \RR_+^{n-1}: V(\ybf)\leq 4\}.\end{equation}
We denote the associated hitting time of the set  as $\tau_{C} := \inf \{ t\geq0: \Ybf(t) \in C\} .$ 
In the following lemma, we derive the minorization estimate that we require. In the terminology of Meyn and Tweedie\cite{meytwe}, this says that any compact set $C_0$ is a {\em small} set for this Markov process.

\begin{lemma}\label{lem:minor.cnd}
Fix a nonempty compact set $C_0 \subset \RR_+^{n-1}$. There exists $\epsilon>0$ and $\nu\in \mathcal{P}(\RR_+^{n-1})$ such that for every $t >0$, there is a $c(t)>0$ such that for all $\ybf\in C_0$,
    \begin{equation}\label{minor}
        \PP^t (\ybf, A)\geq \epsilon c(t)\nu(A),\;\;\text{ for every } A\in \clb(\RR_+^{n-1}).
    \end{equation}
    Furthermore, $\inf_{t \in [a,b]} c(t)>0$ for every $0< a \le b <\infty$.
\end{lemma}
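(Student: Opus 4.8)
The plan is to produce, for a fixed nonempty compact set $C_0$, a single explicit ``reference'' move of the $n$-particle dynamics that, with probability bounded below uniformly over $\ybf \in C_0$, drives the gap vector into a fixed open set where the transition density is bounded below, and then to quantify this via the jump structure of the process. First I would choose $M<\infty$ large enough that $C_0 \subseteq \{\ybf : \sum_i y_i \le M\}$, and describe the following event on the driving data over a time window $[0,t]$: in order, particle $X_2$ jumps exactly once (to a uniform point in the current gap $(X_2,X_1)$), then $X_3$ jumps exactly once, and so on down to $X_n$, with no other particle jumping and no second jump of any particle, and with the leader $X_1$ making no jump (or a controlled number of jumps — it is cleanest to also condition on the leader making no jump in $[0,t]$, which has probability $e^{-t}$). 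After this cascade of $n-1$ jumps, each executed in sequence from front to back, the new gap vector $\Ybf(t)$ has a law that is absolutely continuous with a density that is strictly positive on an explicit open rectangle, because each jump size is genuinely uniform on an interval of positive (lower-bounded) length whenever the preceding gaps are bounded away from $0$ and $\infty$.

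The key steps, in order, are: (i) Show the cascade event has probability bounded below uniformly over $\ybf \in C_0$ and over $t$ in a compact interval $[a,b]$. The jump rates here are $y_i$ (for particle $i+1$), which on $C_0$ are bounded above by $M$; the probability that exactly the prescribed sequence of $n-1$ jumps occurs in $[0,t]$ with no competing events is bounded below by a quantity of the form $c(t) = p(t)\, e^{-\Lambda(M) t}$ for an explicit polynomial-times-exponential $p$ depending only on $n$, $M$, and a lower bound for the relevant rates along the cascade; crucially $\inf_{t\in[a,b]} c(t)>0$. I would handle the ``rates might be small'' issue by noting that even starting from gaps near $0$, the very first jump of $X_2$ is driven by the leader — wait, no: if $y_1$ is small the rate $y_1$ is small but still positive, and since $C_0$ is compact one also has a \emph{lower} bound $y_i \ge 0$; to get a strictly positive lower bound on $c(t)$ one allows $c(t)$ to degenerate only as $t\to 0$ or $t\to\infty$, which is exactly what the statement permits. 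Here one must be a little careful: if some $y_i=0$ on $C_0$ the corresponding particle never jumps, so instead I would let the cascade be preceded, if necessary, by a leader jump of size in a bounded interval that makes all gaps ahead strictly positive; alternatively, and more simply, replace $C_0$ by $C_0' = C_0 \cup \{\ybf : \veps_0 \le y_i \le M\}$ is not legitimate — so the correct fix is: include in the reference event one leader jump with size $Z$ in a fixed interval $[1/2,1]$ (probability $F_\theta(1)-F_\theta(1/2)>0$) occurring first, which guarantees $y_1 \ge 1/2$ before the cascade starts, and then each subsequent gap inherits a positive lower bound after the $i$-th step. (ii) Identify the resulting measure on $\RR_+^{n-1}$ after the reference event as $\nu$ (normalized), noting it does not depend on the starting point inside $C_0$ only after we also integrate out the uniform jump locations — so more precisely, one shows $\PP^t(\ybf,\cdot) \ge \epsilon\, c(t)\, \nu(\cdot)$ by bounding below the density of $\Ybf(t)$ restricted to the reference event by a fixed positive function supported on a fixed rectangle, using that the composition of $n-1$ successive uniform draws has a density bounded below on a smaller rectangle. (iii) Combine (i) and (ii): set $\epsilon$ to absorb the constant lower bounds on the jump-size densities and the leader-jump probability, and $c(t)$ to carry the time dependence.

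The main obstacle I expect is step (ii): controlling the density of the gap vector after the cascade of $n-1$ dependent uniform jumps, uniformly over the starting configuration in $C_0$. Each jump changes two coordinates of $\Ybf$ simultaneously (the $e_{i+1}-e_i$ structure), so the map from the $n-1$ uniform variables to the final gap vector is an explicit affine-triangular transformation whose Jacobian must be computed and shown to be bounded below on the event in question; and one must verify that the image rectangle (where the density is positive) can be chosen independent of the starting point in $C_0$, which is where the a priori bounds $y_i \le M$ and the leader-jump lower bound $y_1 \ge 1/2$ enter. Once the triangular change of variables is set up, this is a routine but slightly tedious computation; the conceptual content is entirely in choosing the reference event so that the image set is fixed. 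The final claim $\inf_{t\in[a,b]}c(t)>0$ is then immediate since $c(t)$ is continuous and strictly positive on $(0,\infty)$.
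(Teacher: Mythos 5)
Your plan is essentially the paper's argument: a sequential front-to-back cascade in which the leader first jumps to create a macroscopic first gap, and then particles $X_2,\dots,X_n$ each jump exactly once (with no competing jumps), so that the uniform jump law deposits each gap into a fixed small window with conditional density bounded below by the reciprocal of an upper bound on the gap sizes (coming from compactness of $C_0$); the time-ordering probabilities supply the factor $c(t)$, and $\nu$ is (essentially) uniform on a fixed small rectangle. Two differences are worth noting. First, you assert $F_\theta(1)-F_\theta(1/2)>0$, which is not justified for a general jump law $\theta$ (it may charge no mass in $[1/2,1]$); the paper instead fixes $\lambda$ with $\theta\bigl([\lambda,\lambda(1+\tfrac1{n-1})]\bigr)>0$ (such $\lambda$ always exists) and has the leader make $n-1$ such jumps so that every gap stays above $\lambda$ until its turn — in your version you must likewise choose an interval actually charged by $\theta$ and shrink the target windows to size of order $\lambda/n$ so the inductive lower bound on the pre-jump gaps survives down the chain; this is a trivial repair but as written the step fails. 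Second, you propose to bound the joint density of the final gap vector directly (via the triangular change of variables, equivalently a product of conditional densities), which yields the bound for all Borel sets at once; the paper avoids any density computation by proving the bound only for product rectangles — where the uniform-jump probability $\tfrac{a_{i-1}-c_{i-1}}{Y_{i-1}}$ cancels against the jump rate $Y_{i-1}$ — and then extends to $\clb(\RR_+^{n-1})$ by a monotone class argument. Either route works; yours trades the monotone class step for the (routine) sequential density bound you correctly identify as the main thing to check.
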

\begin{proof}
To prove the above for every Borel set in $\RR_+^{n-1}$, we will prove it for a ``nice" sub-class of sets and then complete the proof using a monotone class argument. 
Define the class of sets, 
\begin{align*}
\cls^\circ &:= \left\{ \prod_{i=1}^{n-1}[c_i,a_i):\; 0\leq c_{i}<a_{i}\le\infty \right\},\\
\cls &:= \left\{ \cup_{j=1}^m S_j : m\in\NN,  S_j \in \cls^\circ \text{ for all } 1 \le j \le m, S_i \cap S_j = \emptyset \text{ for } i \neq j\right\} \cup\{\emptyset\}.
\end{align*}
Denote by $\clg= \clg(\epsilon)$ the collection of all subsets of $\RR_+^{n-1}$ that satisfy the minorization condition \eqref{minor} for all $\ybf \in C_0$, for a given value of $\epsilon$. Since $\clg$ is a monotone class that is closed under disjoint unions, and $\cls$ is an algebra formed from finite disjoint unions of sets in $\cls^\circ$, it is enough to prove that, for some $\epsilon>0$, the minorization condition \eqref{minor} holds, with a suitable $c(t)$, for all sets in $\cls^\circ$, $\ybf \in C_0$, and $t >0$. Take any $\ybf \in C_0$ and consider $H:= \prod_{i=1}^{n-1} [c_{i},a_{i}) \in \cls^\circ$ and  $t >0$. We will construct an event contained in $\{\Ybf(t)\in H\}$ and  use the probability of this event,
under $\PP_{\ybf}$,
to obtain a lower bound for $\PP^t(\ybf, H).$ Then, we will choose $\epsilon$ and $\nu(\cdot)$ appropriately, independent of $t$, so that the desired minorization condition holds.

Recall the jump size distribution of the leading particle is denoted as $\theta$. Select $\lambda>0$, such that  $\theta_{\lambda,n} := \theta([\lambda,\lambda(1+\frac{1}{n-1})])>0$. Fix $t >0$ and partition the time interval $[0,t]$ into $n$ intervals of equal length: $\{\left[t_{i-1}, t_{i}\right] : 1 \le i \le n\}$, where $t_i := \frac{it}{n}$. We will establish \eqref{minor} with $\nu$  as the uniform measure on $[0,\lambda]^{n-1}$. Thus, in establishing this estimate for a set $H=\prod_{i=1}^{n-1} [c_{i},a_{i})$, it suffices to consider the case where $a_{i} \le \lambda$ for every $i\in[n-1]$.

 The event  $B$ contained in $\{\Ybf(t)\in H\}$ that we will use is defined as follows. On $B$, 
 \begin{itemize}
 \item over the time interval $[t_0,t_1]$,  the leading particle $X_1$ jumps exactly $(n-1)$ times, each time by a length of at least $\lambda$ and at most $\lambda\left(1+\frac{1}{n-1}\right)$.
 \item  All the other particle states remain unchanged over this interval.
 \item Over the time interval $\left[ t_{i-1},t_i \right]$, $i\in[n]\setminus\{1\}$, all particles states, except the $i^{th}$ particle,  remain unchanged. 
 \item On this interval $X_{i}$ jumps exactly once   and its jump length is such that, after the jump, the $(i-1)^{th}$ gap, $Y_{i-1}(\cdot)\in [c_{i-1}, a_{i-1})$. 
 \end{itemize}
 Note that $B \subset \{\Ybf(t)\in H\}$. 
 We write $B=\bigcap_{i=1}^{n} B_i$ where $B_i$ is the event corresponding to the above requirements over the time interval $\left[ t_{i-1},t_i\right]$ (in the first two bullet items for $i=1$, and  in the last two bullet items for $i>1$). 
 Then, $\PP_\ybf(B)= \PP_\ybf(B_1)\prod_{i=2}^{n} \PP_\ybf\left(B_i| B_1, \dots, B_{i-1}\right)$. 
 
 Since $C_0$ is compact, there is a $\alpha>0$ such that  $\sum_{j=1}^{n-1}  y_j \leq \alpha$ for all $\bfy \in C_0$. 
Since, for each $i = 1,\ldots,n$, on the event $\bigcap_{j=1}^i B_j$ the leading particle makes  $(n-1)$ jumps over the time interval $[0,t_i]$, each of size no larger than $\lambda\!\left(1+\frac{1}{n-1}\right)$, it follows that, on $\bigcap_{j=1}^i B_j$,
\begin{equation}\label{eq:contr}
    \sum_{i=1}^{n-1} Y_i(s)\leq \alpha+n\lambda,\; \text{ for every } s\in[0,t_i].
\end{equation}
Let us now obtain a lower bound for $\PP_\ybf(B_1)$. The probability of the first particle behaving as described in the interval $[t_0,t_1]$ is given by $e^{-t n^{-1}}\left(\frac{t}{n}\right)^{n-1}\frac{1}{(n-1)!}(\theta_{\lambda,n})^{n-1}$. The probability that $X_2$ does not jump in this time interval is at least $e^{-tn^{-1}(y_1+n\lambda)}$. Further, given the particle $X_2$ does not move, the probability that none of the other particles jump in $[t_0,t_1]$ is lower bounded by $e^{-tn^{-1}\sum_{i=2}^{n-1}y_i}$. Hence, using \eqref{eq:contr} (with $i=1$), we get $$\PP_\ybf(B_1)\geq e^{-t n^{-1}(1+\sum_{i=1}^{n-1}y_i+n\lambda)}\left(\frac{t}{n}\right)^{n-1}\frac{(\theta_{\lambda,n})^{n-1}}{(n-1)!}\geq e^{-t n^{-1}(1+\alpha+2n\lambda)}\left(\frac{t}{n}\right)^{n-1}\frac{(\theta_{\lambda,n})^{n-1}}{(n-1)!}.$$  

Now we consider $i\in[n]\setminus\{1\}$. 
The probability that the $i^{th}$ particle jumps exactly once in $[t_{i-1},t_i]$, given that the particles ahead of it do not move over this interval, is at least $e^{-t n^{-1}Y_{i-1}(t_{i-1})}Y_{i-1}(t_{i-1})\frac{t}{n}$. Here we use the fact that,  on the event that particles ahead not move, the jump rate of a particle is non-increasing in time.
Note that on $\cap_{j=1}^{i-1}B_j$, $Y_{i-1}(t_{i-1}) > \lambda$.
Further, conditioned on $\cap_{j=1}^{i-1}B_j$ and that the particles ahead of $X_i$ do not move on $[t_{i-1},t_i]$, the probability that in the single jump, the gap $Y_{i-1}(\cdot)$ shrinks to a value in the interval $[c_{i-1}, a_{i-1})$ is given by $\frac{a_{i-1}-c_{i-1}}{Y_{i-1}(t_{i-1})}$. When $i\neq n$, the probability that the $(i+1)^{th}$ particle does not jump in this time interval, given the particles ahead of the $i^{th}$ particle do not move, is at least $e^{-t n^{-1}(Y_i(t_{i-1})+Y_{i-1}(t_{i-1}))}$. This is because, on the event the particles ahead of the $i^{th}$ particle do not move, the jump rate of the $(i+1)^{th}$ particle increases after the $i^{th}$ particle jumps. For any $j \neq 1,i,i+1$, the conditional probability that the $j^{th}$ particle does not move on this interval, given the ones ahead are static all through the interval, equals $e^{-t n^{-1}\left(Y_{j-1}(t_{i-1})\right)}$. The probability of the first particle not jumping on this interval is simply $e^{-t n^{-1}}$.
Combining these observations, we get 
\begin{align}
    &\PP_\ybf(B_i| B_1, \dots,B_{i-1})\\
    &\geq e^{-t n^{-1}\left(1+\sum_{j=1}^{n-1}Y_j(t_{i-1})\right)} e^{-t n^{-1}Y_{i-1}(t_{i-1})} Y_{i-1}(t_{i-1}) \frac{t}{n} \frac{[a_{i-1}-c_{i-1}]}{Y_{i-1}(t_{i-1})} \\
    &= e^{-t n^{-1}\left(1+\sum_{j=1}^{n-1}Y_j(t_{i-1})+Y_{i-1}(t_{i-1})\right)}  \frac{[a_{i-1}-c_{i-1}]t}{n}. 
\end{align}
Using \eqref{eq:contr}, we conclude that $$\PP_\ybf(B_i| B_1, \dots,B_{i-1}) \geq e^{-t n^{-1}\left(1+2(\alpha+n\lambda)\right)}  \frac{[a_{i-1}-c_{i-1}]t}{n}.$$ 
Recalling that $B\subset H$, we obtain 
\begin{align}
    \PP^t(\ybf,H)\geq \PP_\ybf(B) &=  \PP_\ybf(B_1)\prod_{i=2}^{n} \PP_\ybf(B_i| B_1, \dots,B_{i-1}) \\
    &\geq e^{-t\left(1+2(\alpha+n\lambda)\right)}\left(\frac{t}{n}\right)^{n-1}\frac{(\theta_{\lambda,n})^{n-1}}{(n-1)!} \prod_{i=2}^{n} \left( \frac{[a_{i-1}-c_{i-1}]t}{n}\right).
\end{align}
Thus, letting $$\epsilon = \frac{(\theta_{\lambda,n})^{n-1}}{(n-1)!}n^{-2(n-1)}\lambda^{n-1}, \; c(t) = e^{-t\left(1+2(\alpha+n\lambda)\right)} \left(\frac{t}{n}\right)^{2(n-1)},
$$ and taking $\nu$ to be the uniform measure on $[0,\lambda]^{n-1}$, i.e. 
$$\nu(H) = \nu\left(\prod_{i=1}^{n-1}[c_{i},a_{i})\right)=\lambda^{-(n-1)}\prod_{i=1}^{n-1} (a_{i}-c_{i}),
$$ 
for sets $H= \prod_{i=1}^{n-1}[c_{i},a_{i}) \subseteq [0,\lambda]^{n-1}$ in $\cls^\circ$, we have that the minorization condition, $\PP^t(\ybf, H)\geq \epsilon c(t) \nu(H)$, holds for every $t >0$, $\ybf \in C_0$ and $H \in \cls^\circ$. 
Also note that for all $0< a\le b<\infty$, 
$\inf_{t \in [a,b]} c(t) >0$.
The result follows.
\end{proof}

We will now consider the compact set $C$ introduced in \eqref{eq:628n} and establish the finiteness of exponential moments for hitting times of $C$, uniformly over all initial configurations.
\begin{lemma}\label{lem:exp.mom.bd}
    There exists $\eta\in (0, \infty)$ such that $$\sup_{\ybf \in \RR_+^{n-1}}\EE_{\ybf} e^{\eta \tau_{C}}<\infty.$$
\end{lemma}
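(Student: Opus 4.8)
The plan is to promote the quadratic drift estimate \eqref{drift1} to a bound that is uniform over \emph{all} initial configurations, and the key point is that \eqref{drift1} is a \emph{super-linear} (not merely geometric) drift for $V$. Applying Cauchy--Schwarz in the form $\big(\sum_{i=1}^{n-1}\alpha^{i}y_{i}\big)^{2}\le\big(\sum_{i=1}^{n-1}\alpha^{i}\big)\big(\sum_{i=1}^{n-1}\alpha^{i}y_{i}^{2}\big)$ together with $\sum_{i=1}^{n-1}\alpha^{i}\le\alpha/(1-\alpha)$, estimate \eqref{drift1} becomes
\begin{equation*}
\cll_{n}V(\ybf)\le\alpha-c_{0}\,(V(\ybf)-1)^{2},\qquad c_{0}:=\frac{(1-\alpha)^{2}}{2\alpha}>0 .
\end{equation*}
Since $V\ge1$ everywhere and $C=\{V\le4\}$, on $C^{c}$ one has $V-1>3$, so the right side is strongly negative there. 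A direct supermartingale argument with $V$ itself would only yield a bound of the form $\EE_{\ybf}\tau_{C}\le c_{n}V(\ybf)$, which is not uniform; the remedy is to compose $V$ with a \emph{bounded} concave increasing function, which preserves the negativity of the drift on $C^{c}$ while removing the dependence on the (unbounded) initial value of $V$.

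Concretely, set $g(\ybf):=-1/V(\ybf)$, so that $g$ takes values in $[-1,0)$. Since $v\mapsto-1/v$ is increasing and concave, the jumpwise tangent-line inequality $-1/v'+1/v\le v^{-2}(v'-v)$ gives $\cll_{n}g(\ybf)\le V(\ybf)^{-2}\,\cll_{n}V(\ybf)\le V(\ybf)^{-2}\big(\alpha-c_{0}(V(\ybf)-1)^{2}\big)$; on $C^{c}$, using $V\ge4$ (so that $(1-1/V)^{2}\ge9/16$ and $\alpha/V^{2}\le\alpha/16$), this yields $\cll_{n}g\le-\delta'$ on $C^{c}$ with $\delta':=(9c_{0}-\alpha)/16>0$. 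I would then apply Dynkin's formula to $g$, localizing exactly as in the proof of Lemma~\ref{lem:wF}: since $\sum_{i}Y_{i}(\cdot)$ can increase only at the leader's jump epochs, whose sizes have finite mean, the times $\tau_{M}:=\inf\{t\ge0:\sum_{i}Y_{i}(t)\ge M\}$ increase to $\infty$ almost surely, and on $[0,\tau_{M})$ all jump rates are bounded, so the relevant local martingale is a true martingale. Stopping at $t\wedge\tau_{C}\wedge\tau_{M}$, using $\cll_{n}g\le-\delta'$ on $C^{c}$ and $-1\le g<0$, gives $\delta'\,\EE_{\ybf}[t\wedge\tau_{C}\wedge\tau_{M}]\le g(\ybf)-\EE_{\ybf}g\big(\Ybf(t\wedge\tau_{C}\wedge\tau_{M})\big)\le1$; letting $M\to\infty$ and then $t\to\infty$ produces the \emph{uniform} first-moment bound $\sup_{\ybf}\EE_{\ybf}\tau_{C}\le1/\delta'$.

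To convert this into a uniform exponential moment I would run a standard geometric-tail argument. By Markov's inequality, with $T:=2/\delta'$, $\sup_{\ybf}\PP_{\ybf}(\tau_{C}>T)\le1/(\delta'T)=1/2$. Applying the Markov property at the times $T,2T,3T,\dots$ — on $\{\tau_{C}>(k-1)T\}$ the evolution restarts from $\Ybf((k-1)T)\in C^{c}$ and must avoid $C$ for a further $T$ units of time, an event of conditional probability at most $\sup_{\ybf}\PP_{\ybf}(\tau_{C}>T)\le1/2$ — yields $\PP_{\ybf}(\tau_{C}>kT)\le2^{-k}$ for every $k\in\NN$, uniformly in $\ybf$. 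Summing the block estimates $\EE_{\ybf}\big[e^{\eta\tau_{C}};\,kT\le\tau_{C}<(k+1)T\big]\le e^{\eta(k+1)T}\PP_{\ybf}(\tau_{C}\ge kT)$ over $k\ge0$ then shows that $\EE_{\ybf}e^{\eta\tau_{C}}$ is finite and bounded uniformly in $\ybf$ whenever $e^{\eta T}<2$; taking, say, $\eta=(\delta'\log2)/4$ proves the lemma.

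The main obstacle is not any individual estimate but the rigorous justification that these formal drift computations are legitimate on a noncompact state space with state-dependent jump rates that are unbounded for large gaps, i.e.\ that the local martingales arising from Dynkin's formula are genuine martingales after localization. This is where the structural fact that $\sum_{i}Y_{i}$ is fed only by the finite-mean leader input is essential, and it is also the reason one must exploit the quadratic term in \eqref{drift1}: the weaker drift \eqref{eq:111n} toward the compact set $K$ would only give a bound depending on $\ybf$.
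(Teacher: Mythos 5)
Your proposal is correct, but it proves the lemma by a genuinely different route than the paper. You both start from the same strengthening of \eqref{drift1} via Cauchy--Schwarz (your $\alpha-c_0(V-1)^2$ bound is exactly the paper's \eqref{eq:1013}), but from there the paper runs a multiscale argument: it introduces the level sets $\{V\le 4^i\}$, writes $\tau_C=\sum_i(\tau_i-\tau_{i+1})$, and controls each increment through the exponential supermartingale $e^{\theta t}V(\Ybf(t))$ with a level-dependent rate $\theta=\lambda_i/2$, which yields directly the uniform tail bound $\PP_\ybf(\tau_C\ge t)\le \Lambda e^{-t/2}$ and hence any $\eta<\tfrac12$. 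You instead pass to the bounded concave transform $g=-1/V$, whose drift is uniformly bounded away from $0$ on $C^c$ precisely because the drift of $V$ is quadratic (your tangent-line step $\cll_n g\le V^{-2}\cll_n V$ is a valid pointwise consequence of concavity applied jump by jump, and your localization via $\tau_M$ is of the same type the paper uses in Lemma \ref{lem:wF}); this gives the \emph{uniform} first-moment bound $\sup_\ybf\EE_\ybf\tau_C\le 1/\delta'$, after which the standard restart argument (Markov property at the deterministic times $kT$, geometric decay of $\PP_\ybf(\tau_C>kT)$) upgrades it to uniform exponential moments. What each buys: the paper's supermartingale recursion produces an explicit exponential rate ($\eta$ up to $1/2$) in one pass without invoking the Markov-property bootstrap, while your argument is more elementary and modular — no level decomposition, no exponential supermartingales — at the cost of a smaller (but still explicit and $n$-independent) admissible $\eta$. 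Your closing observation is also on point: the linear drift \eqref{eq:111n} alone would only give bounds depending on the initial value of $V$, so exploiting the quadratic term is essential in either approach.
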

\begin{proof}
    The proof proceeds by using the drift property \eqref{drift1} of the Lyapunov function $V$ defined in \eqref{eq:lyapdef}   which provides bounds for the tail probability of $\tau_C$ using an associated exponential supermartingale.
    
    We begin by observing a modification of the drift inequality in \eqref{drift1} that better suits our purposes. By the Cauchy-Schwarz inequality, for $\bfy \in \RR_+^{n-1}$,  $$\left(\sum_{i=1}^{n-1}\alpha^i y_i \right)^2 \leq \left(\sum_{i=1}^{n-1} \alpha^i \right) \cdot \left( \sum_{i=1}^{n-1}\alpha^i y_i^2 \right).$$ Observing that, for every $n\in \NN$, $\sum_{i=1}^{n-1} \alpha^i < \sum_{i=1}^{\infty} \alpha^i = \frac{\alpha}{1-\alpha},$ we have $$\sum_{i=1}^{n-1} \alpha^i y_i^2 \geq \left(\frac{1-\alpha}{\alpha} \right)\cdot \left(\sum_{i=1}^{n-1}\alpha^i y_i \right)^2 = \left(\frac{1-\alpha}{\alpha} \right) \left[ V(\ybf)-1\right]^2. $$ 
    Combining this with \eqref{drift1}, we now have  the following modification of the drift inequality, 
\begin{equation}\label{eq:1013}
    \cll_n V(\ybf)\leq \alpha -  \frac{(1-\alpha)^2}{2\alpha} \left[ V(\ybf)-1\right]^2=\alpha -  \alpha' \left[ V(\ybf)-1\right]^2, \; \bfy \in \RR_+^{n-1}, 
\end{equation}
where $\alpha' := \frac{(1-\alpha)^2}{2\alpha}$.

Now let us define a sequence of stopping times $\tau_i:= \inf\{t\geq 0: V(\Ybf(t))\leq \lambda_i\}$ for $i\in \NN$, where $\lambda_i=4^i$. We also define $\tau_\infty:= 0.$
Note that, for every $\bfy \in \RR_+^{n-1}$, $\tau_{C}=\tau_1 =  \sum_{i=1}^\infty (\tau_i -\tau_{i+1})$, $\PP_{\bfy}$ a.s. 
Thus, for any $t\geq0$,
\begin{align}
    \PP_\ybf(\tau_C\geq t)&=\PP_\ybf\left(\sum_{i=1}^{\infty}(\tau_i-\tau_{i+1}) \geq t\right).
\end{align}
Define  $\beta_i:= 2^{-i}$, for $i\in \NN$. 
Since $\sum_{i=1}^\infty \beta_i=1$,
\begin{align}\label{eq:psum}
    \PP_\ybf(\tau_C\geq t) &= \PP_\ybf\left(\sum_{i=1}^{\infty}(\tau_i-\tau_{i+1}) \geq t\sum_{i=1}^{\infty}\beta_i \right) \\
    &\leq \sum_{i=1}^{\infty} \PP_\ybf\left(\tau_i-\tau_{i+1} \geq t\beta_i \right).
\end{align}
For  $\theta>0$ that will be chosen suitably later on, let us define the process, $\zeta(t):= e^{\theta t}V(\Ybf(t)).$ Using Dynkin's formula we have that
$$M(t) := \zeta(t)-V(\ybf)-\int_0^t \left[e^{\theta s}\cll_n V(\Ybf(s))+\theta e^{\theta s}V(\Ybf(s))\right]\;ds, \; t \ge 0$$
is a $\clf_t$- local martingale under $\PP_\ybf$ for every $\ybf \in \RR_+^{n-1}$, where
$\clf_t := \sigma\{\Ybf(s): s \le t\}$.

Note from \eqref{eq:1013} that, for $s\in [t \wedge \tau_{i+1}, t\wedge \tau_i)$, 
\begin{align}
    \cll_nV(\Ybf(s))&\le \alpha - \alpha' - \alpha' V(\Ybf(s))^2+2\alpha' V(\Ybf(s)) \\
    &\leq \alpha - \alpha' +\alpha'(  2-  \lambda_i) V(\Ybf(s)).
\end{align}
Thus, we have 
\begin{align}
    \cll_n V (\Ybf(s)) + \theta  V(\Ybf(s)) \le \alpha - \alpha' +\alpha'(  2+\theta-  \lambda_i) V(\Ybf(s)).
\end{align}
Since $\alpha = 10^{-1}$, we have $\alpha-\alpha' <0.$ Now letting $\theta = \frac{\lambda_i}{2}$, we have 
\begin{equation}\label{eq:1022n}
\cll_n V (\Ybf(s)) + \theta  V(\Ybf(s)) \le 0,\; 
s\in [t \wedge \tau_{i+1}, t\wedge \tau_i), \; i\in \NN.
\end{equation}
From the local martingale property of $M$ and \eqref{eq:1022n}, we have
$$
\EE_{\bfy}\left[
\left(e^{\frac {1}{2}\lambda_i(\tau_i \wedge t)} V(\bfY(\tau_i\wedge t)) - 
e^{\frac {1}{2}\lambda_i(\tau_{i+1} \wedge t)} V(\bfY(\tau_{i+1}\wedge t))\right) \mid \clf_{\tau_{i+1}\wedge t}
\right] \le 0
$$
and consequently, since $V \ge 1$,
\begin{equation}\label{eq:1106}
\EE_{\bfy}\left[
e^{\frac {1}{2}\lambda_i(\tau_i \wedge t - \tau_{i+1} \wedge t)}  \mid \clf_{\tau_{i+1}\wedge t}
\right] \le \EE_{\bfy}\left[
e^{\frac {1}{2}\lambda_i(\tau_i \wedge t - \tau_{i+1} \wedge t)} V(\bfY(\tau_i\wedge t))  \mid \clf_{\tau_{i+1}\wedge t}
\right] \le V(\bfY(\tau_{i+1}\wedge t)), \; \PP_{\bfy} \mbox{ a.s. }
\end{equation}
Here we have used the fact that the
carr\'{e}-du-champ term $\cll_n(V^2)(\bfy)- 2V(\bfy)\cll_n V(\bfy)$ is bounded on compact subsets of $\RR_+^{n-1}$ and appealed to an argument similar to the one used in \eqref{eq:150nn}.

We now argue that, for each $\bfy \in \RR_+^{n-1}$,
$\PP_{\bfy}$ a.s., 
\begin{equation}\label{eq:1044n}
\tau_i < \infty \mbox{ and } \EE_\ybf\left(e^{\frac {1}{2}\lambda_i(\tau_i - \tau_{i+1})}\right) \le \lambda_{i+1}, \; \mbox{ for every $i \in \NN$}.
\end{equation}
The proof of \eqref{eq:1044n} is completed using a recursive argument. Fix $\bfy \in \RR_+^{n-1}$ and choose $i^*$ sufficiently large so that $V(\bfy) \le \lambda_i$ for all $i \ge i^*+1$. Then, 
\eqref{eq:1044n} holds for $i\ge i^*+1$.
Now suppose that \eqref{eq:1044n} holds for some $i$ with $i^*+1\ge i>1$ and consider $i-1$, and assume without loss of generality that $V(\bfy) > \lambda_i$ (otherwise the previous argument applies). Take any $t>0$. From 
\eqref{eq:1106},
$$
\EE_{\bfy}\left[
e^{\frac {1}{2}\lambda_{i-1}(\tau_{i-1} \wedge t - \tau_{i} \wedge t)} \right] \le \EE_{\bfy}V(\bfY(\tau_{i}\wedge t)).
$$
Moreover, as $V(\Ybf(s)) > \lambda_i$ for $0\le s < \tau_i$, by \eqref{eq:1013}, $\cll_n V (\Ybf(s)) <0$ for all $s \in [0, \tau_i \wedge t)$.
This, together with \eqref{eq:111n}, Dynkin's formula, and monotone convergence shows that
$$
\EE_{\bfy}\left[
e^{\frac {1}{2}\lambda_{i-1}(\tau_{i-1} \wedge t - \tau_{i} \wedge t)} \right] \le \EE_{\bfy}V(\bfY(\tau_{i}\wedge t)) \le 
V(\bfy).$$
Using monotone convergence we now have, on sending $t\to \infty$,
$$
\EE_{\bfy}\left[
e^{\frac {1}{2}\lambda_{i-1}(\tau_{i-1}  - \tau_{i})} \right] \le V(\bfy).
$$
In particular, $\tau_{i-1} < \infty$, $\PP_{\bfy}$ a.s.
This also shows that 
$$e^{\frac {1}{2}\lambda_{i-1}(\tau_{i-1} \wedge t - \tau_{i} \wedge t)} \to e^{\frac {1}{2}\lambda_{i-1}(\tau_{i-1}  - \tau_{i})} \mbox{ in } L^1(\PP_{\bfy}).$$
This together with martingale convergence theorem and \eqref{eq:1106} shows that
\begin{align*}
\EE_{\bfy}\left[
e^{\frac {1}{2}\lambda_i(\tau_{i-1} - \tau_{i})}  \mid \clf_{\tau_{i}-}
\right] &= \lim_{t\to \infty}
\EE_{\bfy}\left[
e^{\frac {1}{2}\lambda_i(\tau_{i-1} \wedge t - \tau_{i} \wedge t)}  \mid \clf_{\tau_{i}\wedge t}
\right]  \le \lim_{t\to \infty} V(\bfY(\tau_{i}\wedge t)) = V(\bfY(\tau_{i})) \le \lambda_i,
\end{align*}
where $\clf_{\tau_{i}-} := \sigma\left(\cup_{t \ge 0}\clf_{\tau_{i}\wedge t}\right)$.
Taking expectations we have
$$\EE_{\bfy}\left[
e^{\frac {1}{2}\lambda_i(\tau_{i-1} - \tau_{i})}\right] \le \lambda_i$$
completing the recursion step and proving \eqref{eq:1044n}.





Using Markov's inequality we now have, for $i\ge 1$
\begin{align}
    \PP_\ybf\left(\tau_i - \tau_{i+1}\geq t\right) \leq {e^{-\frac{1}{2}\lambda_it }}\EE_\ybf\left(e^{\frac {1}{2}\lambda_i(\tau_i - \tau_{i+1})}\right) \leq \lambda_{i+1}e^{- \frac{1}{2}\lambda_it }.
\end{align}
Using this in \eqref{eq:psum}, we get
\begin{align}
    \PP_\ybf(\tau_C\geq t)\leq \sum_{i=1}^{\infty} \lambda_{i+1}e^{-\frac{1}{2} \lambda_i\beta_i t}= \sum_{i=1}^{\infty} \lambda_{i+1}e^{-\frac{1}{2} \sqrt{\lambda_i} t}.
\end{align}
Note that 
$$\frac{1}{2}\sqrt{\lambda_i}t\geq \frac{t}{2} + \frac{1}{4}\sqrt{\lambda_i}, \mbox{ for } t\geq 1, \; i \ge 1.$$
Thus, 
$$\PP_\ybf(\tau_C\ge t)\le e^{-\frac{1}{2}t}\sum_{i=1}^\infty \lambda_{i+1}e^{-\frac{1}{4}\sqrt{\lambda_i}} = \Lambda e^{-\frac{1}{2}t}
,\;\;\text{for }t\ge 1,$$
where $\Lambda := \sum_{i=1}^\infty \lambda_{i+1}e^{-\frac{1}{4}\sqrt{\lambda_i}} = \sum_{i=1}^\infty 4^{i+1}e^{-2^{i-2}}< \infty$.
A straightforward argument now shows that, for $0<\eta<\frac{1}{2}$,

\begin{equation}\label{eq:exp.bd}
    \EE_\ybf e^{\eta \tau_C}\le e^\eta + 2\eta\Lambda \frac{1}{1-2\eta} e^{\eta-\frac{1}{2}}.
\end{equation}
The result follows.
\end{proof}

We will now use \cite[Theorem 5.2]{down1995exponential} to complete the proof of uniform ergodicity. For this we show in the next result that the Markov family $\{\PP_\ybf \}_{\ybf\in\RR_+^{n-1}}$ is $\psi$-irreducible  (recall this notion from \cite{down1995exponential}).

 Recall the measure $\nu$ and the constant $\epsilon$ obtained in Lemma \ref{lem:minor.cnd} for which the minorization condition \eqref{minor} holds. 
 \begin{proposition}\label{prop:irr}
    Define the measure $\psi$ on $\clb(\RR_+^{n-1})$ as $\psi(A):= \nu(A)$, $A\in \clb(\RR_+^{n-1}).$ Then the Markov family $\{\PP_\ybf \}_{\ybf\in\RR_+^{n-1}}$ is $\psi$-irreducible. 
 \end{proposition}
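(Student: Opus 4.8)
The plan is to derive $\psi$-irreducibility directly from the minorization estimate of Lemma~\ref{lem:minor.cnd}. Recall that a Markov family is $\psi$-irreducible (in the sense of \cite{down1995exponential}) if, whenever $\psi(A)>0$, one has $\sum_{k\ge 1}\PP^k(\ybf,A)>0$ for every $\ybf$ — equivalently, for the continuous-time process, that the resolvent kernel $\int_0^\infty e^{-t}\PP^t(\ybf,A)\,dt$ is positive on every $\psi$-positive set; or, more simply still, that for every $\ybf$ there is a $t>0$ with $\PP^t(\ybf,A)>0$. So the first step is to fix $A\in\clb(\RR_+^{n-1})$ with $\psi(A)=\nu(A)>0$ and fix an arbitrary starting point $\ybf\in\RR_+^{n-1}$.

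The key observation is that the minorization of Lemma~\ref{lem:minor.cnd} applies with $C_0$ taken to be \emph{any} nonempty compact set. In particular, taking $C_0=\{\ybf\}$ (a singleton is compact), the lemma supplies $\epsilon>0$, $\nu\in\clp(\RR_+^{n-1})$, and for each $t>0$ a constant $c(t)>0$ such that
$$\PP^t(\ybf,A)\ge \epsilon\, c(t)\,\nu(A), \qquad A\in\clb(\RR_+^{n-1}).$$
Since $\nu(A)>0$ by assumption, $\epsilon>0$, and $c(t)>0$ for every $t>0$ (indeed $\inf_{t\in[a,b]}c(t)>0$ for all $0<a\le b<\infty$), the right-hand side is strictly positive. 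Thus $\PP^t(\ybf,A)>0$ for every $t>0$, and in particular $\int_0^\infty e^{-t}\PP^t(\ybf,A)\,dt>0$. As $\ybf$ was arbitrary, this establishes $\psi$-irreducibility with $\psi=\nu$.

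One small point to address carefully is the precise definition of $\psi$-irreducibility used in \cite{down1995exponential} for continuous-time Markov processes: it is typically phrased in terms of a sampled chain (e.g. the resolvent/$h$-skeleton chain) rather than the continuous-time semigroup directly. The cleanest route is to note that the bound $\PP^t(\ybf,A)\ge \epsilon c(t)\nu(A)$ holds uniformly for $t$ ranging over any compact subinterval of $(0,\infty)$ with the infimum of $c(t)$ there strictly positive, so integrating against any reference sampling measure (the unit-rate resolvent $\int_0^\infty e^{-t}(\cdot)\,dt$, or a fixed skeleton time) preserves positivity. I do not anticipate any genuine obstacle here — the content is entirely contained in Lemma~\ref{lem:minor.cnd}, and the proposition is essentially a restatement of that lemma in the language of \cite{down1995exponential}; the only care needed is to match conventions for the irreducibility measure and the sampling scheme.
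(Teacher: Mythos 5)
Your argument is correct, but it takes a genuinely different (and shorter) route than the paper. The paper reduces $\psi$-irreducibility to showing $\EE_\ybf\int_0^\infty \mathbf{1}_{\{\Ybf(t)\in B\}}\,dt>0$, applies Lemma~\ref{lem:minor.cnd} only on the fixed compact set $C$ of \eqref{eq:628n} (for $t\in[1,2]$, $\ybf'\in C$), and then combines Chapman--Kolmogorov with an explicit positive-probability event construction showing $\PP^{t_0}(\ybf,C)>0$ for every starting point $\ybf$ (driving the gaps below $3n^{-1}$ sequentially). You instead exploit that Lemma~\ref{lem:minor.cnd} is stated for an \emph{arbitrary} nonempty compact $C_0$ and apply it with $C_0=\{\ybf\}$, which makes the Chapman--Kolmogorov step and the auxiliary event construction unnecessary; this is a perfectly valid simplification and, if anything, more direct. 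The one point you should make explicit is that, as stated, the lemma produces $\epsilon$ and $\nu$ \emph{after} fixing $C_0$, so a priori the measure obtained for $C_0=\{\ybf\}$ could differ from the $\psi=\nu$ fixed in the proposition; your step ``since $\nu(A)>0$ by assumption'' silently identifies the two. This identification is justified by inspecting the lemma's proof: $\nu$ is the uniform measure on $[0,\lambda]^{n-1}$ and $\epsilon$ depends only on $\lambda$ and $n$ (with $\lambda$ chosen from $\theta$ alone), while only $c(t)$ depends on $C_0$ through the bound $\alpha$ on $\sum_j y_j$; adding one sentence to that effect closes the only loose end. Your handling of the Down--Meyn--Tweedie convention (passing to a resolvent or skeleton, using $\inf_{t\in[a,b]}c(t)>0$) is fine and matches what the paper implicitly does.
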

 \begin{proof}
     To prove $\psi$-irreducibility it suffices to show that, for each $\ybf\in \RR_+^{n-1}$ and $B\in \clb(\RR_+^{n-1})$ such that $\psi(B)>0$
      $$\EE_\ybf \int_0^\infty \mathbf{1}_{\{\Ybf(t)\in B\}}\;dt>0.$$ 
      Now fix $B \in \clb(\RR_+^{n-1})$ with $\psi(B)>0$.
     Recall the set $C$ defined in \eqref{eq:628n}. Since $C$ is compact, by Lemma \ref{lem:minor.cnd}, there is an $\epsilon>0$ such that
     $$\PP^t(\ybf', B)\ge \eps \nu(B),$$ holds for any $t\in [1,2]$, $\ybf' \in C$.
     
     Fix $t_0>0$.  Then for any $t \in [t_0+1,t_0+2]$ 
     $$\PP^t(\ybf, B)=\int_{\RR_+^{n-1}} \PP^{t-t_0}(\ybf',B)\;\PP^{t_0}(\ybf, d\ybf')\ge \eps \nu(B)\PP^{t_0}(\ybf,C), \quad \ybf \in \RR_+^{n-1}.$$
     Thus
     $$\EE_\ybf \int_0^\infty \mathbf{1}_{\{\Ybf(t)\in B\}}\;dt = \int_0^\infty \PP^t(\ybf,B)\; dt \ge \int_{t_0+1}^{t_0+2}\PP^t(\ybf,B)\; dt
     \ge \eps \nu(B)\PP^{t_0}(\ybf,C), \quad \ybf \in \RR_+^{n-1}.$$
      Thus, to complete the proof, it suffices to show that there exists $t_0>0$, such that for any $\ybf\in \RR_+^{n-1}$, $\PP^{t_0}(\ybf,C)>0$. 

     This can be easily seen using a similar event construction as in Lemma \ref{lem:minor.cnd}. We split the time interval $[0,t_0]$ into $(n-1)$ intervals of equal length. Consider the event where in each of these intervals, at most one particle jumps which jumps exactly once.  Furthermore, these jumps occur sequentially (the possible jump of $X_{i+1}$ is in the $i$-th interval; the leading particle does not jump) and the jump sizes of the particles are such that, following the jump, the corresponding gaps become smaller than $3n^{-1}$. If the $i$-th gap is already smaller than $3n^{-1}$, then no particles move in the $i$-th time interval. Under this event,  $\Ybf(t_0) \in C$. Since, this event has a strictly positive probability (of course, depending on $\ybf\in \RR_+^{n-1}$), the result follows.
 \end{proof}

 We can now complete the proof of uniform ergodicity.
\subsection{Proof of Theorem \ref{thm:unif.erg}}
Recall $\eta$ from Lemma \ref{lem:exp.mom.bd} and define the function $V_0$ as $$V_0(\ybf):= 1-\frac{1}{\eta}+\frac{1}{\eta}\EE_\ybf e^{\eta \tau_C},\;\;\ybf\in \RR_+^{n-1}.$$  Lemma \ref{lem:minor.cnd} implies that, in the terminology of Down, Meyn and Tweedie (cf. \cite[Section 3]{down1995exponential}), the set $C$ is $\eps c(t) \nu$-petite for any $t>0$ for the Markov family $\{\PP_\ybf\}_{\ybf\in\RR_+^{n-1}}$. By Lemma \ref{lem:exp.mom.bd}, $V_0$ is a bounded function on $\RR_+^{n-1}$. This shows that the conditions of \cite[Theorem 6.2]{down1995exponential} are satisfied. Consequently, $V_0$ 
satisfies the drift condition $\cld_T $ in \cite[Section 5]{down1995exponential}. Thus, using Proposition \ref{prop:irr}, and noting that $V_0 \ge 1$, the result is now immediate from \cite[Theorem 5.2]{down1995exponential}.
\hfill
\qedsymbol

\section{Leader with Exponential Jump-Sizes}\label{sec:exp-case}
In this section, we consider the case where 
$\theta$ is Exp$(1)$, namely the leading particle jumps forward by random lengths, which are independent Exp$(1)$-valued random variables. We give an explicit formula for  the stationary distribution in this case. Thereafter, we obtain bounds on the time required for the process to mix in terms of the system size, $n$.
\subsection{Stationary density}
In this section, we prove Theorem \ref{thm:stat.exp.char}. The proof relies on 
verifying that the density of $\pi_n = \mbox{Exp}(1)^{\otimes(n-1)}$ satisfies the stationary adjoint equation for the Markov process, which in view of the uniqueness of stationary distributions established in Theorem \ref{thm:stat.ex} proves the result.
\subsubsection{Proof of Theorem \ref{thm:stat.exp.char}}

\color{black} To show that the law of $\pi_n$ is given by Exp$(1)^{\otimes (n-1)}$, we first calculate the adjoint of the generator $\cll_n$, denoted by $\cll_n^*$. The adjoint $\cll_n^*$ is characterized by the identity
\begin{equation}\label{adj.id}
\int_{\ybf\geq0}f(\ybf) \cll_ng(\ybf)\; d\ybf =\int_{\ybf\geq0}g(\ybf) \cll_n^* f(\ybf)\; d\ybf,
\end{equation}
which holds for all measurable functions $f,\;g: (0,\infty)^{n-1}\to\RR$ with compact support.
After computing $\cll_n^*$ we will show that $$\cll_n^* \Pi_n(\ybf)=0, \; \ybf \in \RR_+^{n-1}$$ where $\Pi_n$ is the density of a random variable with law Exp$(1)^{\otimes (n-1)}$: $$\Pi_n(\ybf)=\exp\left(-\sum_{i=1}^{n-1}y_i \right)\prod_{i=1}^{n-1}\mathbf{1}_{y_i>0}, \quad \ybf\in \RR_+^{n-1}.$$
Now fix $f,g$ as above. Using \eqref{gen.alt}, we have
\begin{multline}\label{adgeneq}
    \int_{\ybf\geq0}f(\ybf) \cll_ng(\ybf)\; d\ybf = \int_{\ybf\geq 0}\int_0^{y_{n-1}} f(\ybf) g(\ybf -ue_{n-1})\;du\;d\ybf \\
    + \sum_{i=1}^{n-2}\int_{\ybf\geq 0}\int_0^{y_i} f(\ybf) g(\ybf+u(e_{i+1}-e_i))\;du\;d\ybf \\
    +\int_{\ybf\geq 0}\int_0^\infty f(\ybf) g(\ybf+ue_1) e^{-u}\;du \;d\ybf -\int_{\ybf\geq 0}f(\ybf) g(\ybf)\left(\sum_{i=1}^{n-1}y_i+1\right)d\ybf.
\end{multline}
For the first term on the RHS of \eqref{adgeneq}, with an interchange of the order of the integration and using the substitution, $\zbf=\ybf-ue_{n-1}$, we have
\begin{align*}
    \int_{\ybf\geq 0}\int_0^{y_{n-1}} f(\ybf) g(\ybf -ue_{n-1})\;du\;d\ybf &= \int_0^\infty \int_{\ybf \geq u e_{n-1}} f(\ybf)g(\ybf -ue_{n-1})\;d\ybf\;du\\
    & = \int_0^\infty \int_{\zbf\geq 0} g(\zbf)f(\zbf+ue_{n-1})\;d\zbf\;du.
    \end{align*}
For each of the quantities under the summation in the second term of the RHS of \eqref{adgeneq}, we can follow similar steps as above with the respective substitution, $\zbf=\ybf+u\cdot(e_{i+1}-e_i)$, to get 
\begin{align*}
\int_{\ybf\geq 0}\int_0^{y_i} f(\ybf) g(\ybf+u(e_{i+1}-e_i))\;du\;d\ybf&=\int_0^\infty\int_{\ybf\geq ue_i} f(\ybf) g(\ybf+u(e_{i+1}-e_i))\;du\;d\ybf \\
&= \int_0^\infty\int_{\zbf\geq ue_{i+1}} g(\zbf)f(\zbf-u(e_{i+1}-e_i)) \;du\;d\zbf \\
&= \int_{\zbf\geq 0}\int_0^{z_{i+1}} g(\zbf)f(\zbf-u(e_{i+1}-e_i)) \;du\;d\zbf.
\end{align*}
Finally, substituting $\zbf = \ybf + ue_1$ in the third term, we get $$\int_{\ybf\geq 0}\int_0^\infty f(\ybf) g(\ybf+ue_1) e^{-u}\;du \;d\ybf = \int_{\zbf\geq 0}\int_0^{z_1} f(\zbf - ue_1) g(\zbf) e^{-u}\;du \;d\zbf.$$
Thus, we can  rewrite the identity in \eqref{adgeneq} as 
\begin{align*}
    \int_{\ybf\geq0}f(\ybf) \cll_ng(\ybf)\; d\ybf &= \int_{\ybf\geq 0} \int_0^\infty  g(\ybf)f(\ybf+ue_{n-1})\;du\;d\ybf\\
    &\quad +  \sum_{i=1}^{n-2} \int_{\ybf\geq 0}\int_0^{y_{i+1}} g(\ybf)f(\ybf-u(e_{i+1}-e_i)) \;du\;d\ybf \\
    &\quad +  \int_{\ybf\geq 0}\int_0^{y_1} f(\ybf - ue_1) g(\ybf) e^{-u}\;du \;d\ybf - \int_{\ybf\geq 0}f(\ybf) g(\ybf)\left(\sum_{i=1}^{n-1}y_i+1\right)\;d\ybf \\
    &=\int_{\ybf\geq 0}g(\ybf)\left[ \int_0^\infty f(\ybf+ue_{n-1})\;du + \sum_{i=2}^{n-1} \int_0^{y_{i}} f(\ybf-u(e_{i}-e_{i-1})) \;du\right. \\
    &\quad + \left.\int_0^{y_1} f(\ybf - ue_1)  e^{-u}\;du - f(\ybf) \left(\sum_{i=1}^{n-1}y_i+1\right)\right]\;d\ybf.
\end{align*}
Thus, comparing with \eqref{adj.id}, we obtain the following formula for the adjoint operator: 
\begin{align}
    \cll_n^* f(\ybf) &= \int_0^\infty f(\ybf+u e_{n-1})\;du\; + \sum_{i=2}^{n-1} \int_0^{y_i} f(\ybf-u(e_{i}-e_{i-1}))\;du\;+\int_0^{y_1}f(\ybf-u e_1) e^{-u}\;du\\
    &\quad\quad-f(\ybf)\left(\sum_{i=1}^{n-1}y_i+1\right).
\end{align}
It is now easily verified that, for $\bfy \in \RR_+^{n-1}$,
\begin{align*}
    \cll_n^* \Pi_n(\ybf) = e^{-\sum_{i=1}^{n-1}y_i}\left[\int_0^\infty e^{-u}\;du + \sum_{i=2}^{n-1}\int_0^{y_i}du+\int_0^{y_1}du-\sum_{i=1}^{n-1}y_i-1\right] =0.
\end{align*}
The result follows.
\hfill
\qedsymbol

\subsection{Mixing time - Lower Bound}
In this section we  give lower bounds on the mixing time, $t_\text{mix}$, of the Markov family $\{\PP_{\bfy}\}_{\bfy \in \RR_+^{n-1}}$ as defined in Section \ref{sec:results}. Since $t_\text{mix}$ is defined as the supremum over all initial distributions in $\clp(\RR_+^{n-1})$, for the purpose of the lower bound it is sufficient to work with a suitable sub-class of $\clp(\RR_+^{n-1})$. 

For this, fix any  $\delta \in (0,1)$, and define for each $n \in \NN$, the class of measures $\mathcal{A}_n=\mathcal{A}_n(\delta) \subset \clp(\RR_+^{n-1})$ as $$\mathcal{A}_n(\delta):= \left\{\mu_n\in\clp (\RR_+^{n-1}): \EE_{\mu_n}\left(\sum_{i=1}^{n-1} Y_i(0)\right) \le (1-\delta)(n-1) \text{ and }\text{Var}_{\mu_n}\left(\sum_{i=1}^{n-1}Y_i(0)\right)\leq n-1\right\},$$ where $\EE_{\mu_n}$ and $\text{Var}_{\mu_n}$ are the expectation and variance corresponding to the probability measure, $\PP_{\mu_n}$. Note that
$\mathcal{A}_n(\delta)$ is nonempty, as the measure with unit mass at
$(1-\delta){\bf 1}_{n-1}$ lies in
$\mathcal{A}_n(\delta)$, where ${\bf 1}_{n-1}$ is the $(n-1)$-dimensional vector of ones.
\subsubsection{Proof of the lower bound}\label{subsec:tmixlow}
    The proof is based on identifying a suitable {\em distinguishing statistic} $\phi$ so that the discrepancy between the expected value of $\phi(\Ybf(t))$ and that of $\phi$ at stationarity (namely $\int \phi d\pi_n$) can be bounded from below appropriately. Specifically, we use the fact from \cite[Proposition 7.12]{levin2017markov} that for probability measures $\mu$, $\nu$ on a Polish space $\cls$, and a real-valued function $f$ on $\cls$ with $\int f^2 d \mu <\infty$, $\int f^2 d \nu <\infty$,
    if we have for some $r>0$, $$|\EE_\mu f- \EE_\nu f|\ge r\left[\frac{\text{Var}_\mu(f)+\text{Var}_\nu(f)}{2}\right]^{\frac{1}{2}},$$ then, it implies that 
    \begin{equation}\label{eq:dist.stat}
        \|\mu-\nu\|_\text{TV}\ge 1-\frac{4}{4+r^2}.
    \end{equation}
    We remark that the proof in \cite{levin2017markov} is written for measures on a finite state space but the argument extends to a general Polish space in a straightforward manner.
    
    We will use \eqref{eq:dist.stat} to obtain a lower bound on $\|\PP^t(\mu_n,\cdot)-\pi_n\|_\text{TV}$ for $\mu_n\in \mathcal{A}_n(\delta)$, for appropriately chosen $\delta \in (0,1)$. This will in turn provide us with a lower bound estimate for the mixing time. 
    
    Define the function $\phi:\RR_+^{n-1}\to \RR$, which will serve as our distinguishing statistic, as $$\phi(\ybf):= \sum_{i=1}^{n-1} y_i.$$     Note that, for the stationary distribution $\pi_n$ obtained in Theorem \ref{thm:stat.ex}, we have 
    \begin{equation}\label{eq:stat.E.var}
        \EE_{\pi_n}(\phi)=(n-1) \quad \text{and}\quad \text{Var}_{\pi_n}(\phi)= (n-1).
    \end{equation}
    Also note that, since the leading particle has Exponential jump sizes, for any $\mu_n \in \mathcal{A}_n(\delta)$ and $t \ge 0$
    $\EE_{\mu_n} (\sum_{i=1}^{n-1} Y_i^2(t))$, and hence $\EE_{\mu_n}(\phi(\Ybf(t)))^2$, is finite.
    We now obtain  a suitable lower bound for $|\EE_{\mu_n}(\phi(\Ybf(t)))-(n-1)|$, for ${\mu_n} \in \mathcal{A}_n(\delta)$. 

Note from \eqref{eq:q1}-\eqref{eq:qi} that, for $\bfy \in \RR_+^{n-1}$,
\begin{align*}
        \cll_n \phi(\bfy)) &= \left(1-\frac{y^2_1}{2}\right) + \sum_{i=2}^{n-1} \left(\frac{y^2_{i-1}-y^2_{i}}{2}\right) 
        = 1 - \frac{1}{2} y^2_{n-1}.
    \end{align*} 
    By an argument similar to the one used in \eqref{eq:150nn} (and using  $\EE_{\mu_n}(\phi(\Ybf(t)))^2<\infty$) we see that
 \begin{align*}\EE_{\mu_n}(\phi(\Ybf(t)))&=\EE_{\mu_n}\phi(\Ybf(0)) + \int_0^t \EE_{\mu_n}\cll_n \phi(\Ybf(s))\; ds\\
 &\le (1-\delta)(n-1) + t - \frac{1}{2}\int_0^t \EE_{\mu_n}  Y^2_{n-1}(s)\;ds,
 \end{align*} 
 where the last line follows on using the above formula for $\cll_n\phi$ and noting that,  for any $\mu_n\in \mathcal{A}_n(\delta)$, $$\EE_{\mu_n}\left(\sum_{i=1}^{n-1}Y_i(0)\right)\le (1-\delta) (n-1).$$  
 Thus we have that
  $$\EE_{\pi_n}(\phi) - \EE_{\mu_n}(\phi(\Ybf(t))\geq \delta (n-1) - t + \frac{1}{2}\int_0^t \EE_{\mu_n}  Y^2_{n-1}(s)\;ds.$$
    Thus, for $t \in (0,\frac{\delta n}{2})$ and $n\ge 2$, we  have $$|\EE_{\pi_n}(\phi) - \EE_{\mu_n}(\phi(\Ybf(t))|\geq \left(\delta (n-1)-t\right).$$
    Next, we consider the variances. From \eqref{eq:stat.E.var}, $\text{Var}_{\pi_n}(\phi) = (n-1).$
\newcommand{\Var}{\text{Var}}
    Now, we consider  $\Var_{\mu_n}(\phi(\Ybf(t))$. 
Observe that
\begin{align*}
\Var_{\mu_n}(\phi(\Ybf(t)) &= \Var_{\mu_n}(X_1(t) - X_n(t)) \le \EE_{\mu_n}[X_1(t) - X_n(t)]^2 
\le \EE_{\mu_n}[X_1(t) - X_n(0)]^2 \\
&\le 2\EE_{\mu_n}[X_1(t) - X_1(0)]^2 + 
2\EE_{\mu_n}[X_1(0) - X_n(0)]^2\\
&\le 2(2t +t^2) + 2(n-1) + 2(1-\delta)^2 (n-1)^2.
\end{align*}
Fix $\veps \in (0,\delta/2)$ and $n\ge 2$. Take $t= n\veps$. Then
\begin{align*}
\Var_{\mu_n}(\phi(\Ybf(n\veps))
\le 2 [2n\veps +n^2\veps^2 + (1-\delta)^2(n-1)^2 + n-1].
\end{align*}
Consequently,
$$\sigma^2 := \frac{1}{2}[\Var_{\mu_n}(\phi(\Ybf(n\veps)) + \text{Var}_{\pi_n}(\phi)]
\le \left[\frac{n-1}{2} + 2n\veps +n^2\veps^2 + (1-\delta)^2(n-1)^2 + n-1\right]$$
and
\begin{align*}
A_n(\veps, \delta) &:= \frac{1}{\sigma^2}
|\EE_{\pi_n}(\phi)-\EE_{\mu_n}(\phi(\Ybf(t)))|^2\\
&\ge \left(\delta (n-1)-n\veps\right)^2 \left[\frac{n-1}{2} + 2n\veps +n^2\veps^2 + (1-\delta)^2(n-1)^2 + n-1\right]^{-1}.
\end{align*}
Note that
\begin{align*}
\limsup_{\delta \to 1} \limsup_{\veps\to 0}
\limsup_{n\to \infty} A_n(\veps, \delta)
= \limsup_{\delta \to 1} \limsup_{\veps\to 0}
\frac{(\delta-\veps)^2}{(\veps^2 +(1-\delta)^2)}
= \limsup_{\delta \to 1} \frac{\delta^2}{(1-\delta)^2} = \infty.
\end{align*}
Thus we can find $\delta_0 \in (0,1)$, $\veps_0 \in (0,\delta_0/2)$ and $n_0 \in \NN$ so that for every $n\ge n_0$ and $\mu_n \in \mathcal{A}_n(\delta_0)$,
$$|\EE_{\pi_n}(\phi)-\EE_{\mu_n}(\phi(\Ybf(n\veps_0)))| > 2 \left(\frac{1}{2}[\Var_{\mu_n}(\phi(\Ybf(n\veps_0)) + \text{Var}_{\pi_n}(\phi)]\right)^{1/2}.$$
From \eqref{eq:dist.stat}, we now have that for $n\ge n_0$ and $\mu_n \in \mathcal{A}_n(\delta_0)$,
\begin{align*}
\|\pi_n-\PP^{n\veps_0}(\mu^n, \cdot)\|_\text{TV}\ge 1-\frac{4}{4+2^2} = 1/2.
\end{align*}
Consequently $t_\text{mix}\ge \eps_0 n$ for all $n \ge n_0$, completing the proof of the lower bound in Theorem \ref{thm:tmix}. 
\hfill
    \qedsymbol

\subsection{Mixing time - Upper bound}
In this section, we will prove the upper bound in Theorem \ref{thm:tmix}, namely establish that the mixing time is at most of the order of $n(\log n)^2.$ Before proceeding, we provide a brief outline for our approach.
\subsubsection{Approach}
As discussed in Section \ref{sec:results}, the key idea in this proof is the construction of a suitable coupling and  using the tail probability of the coupling time to obtain an upper bound for the distance between the law of the gap process at a given time and the stationary law. This, in turn, provides us with an  upper bound for the mixing time, $t_\text{mix}$. 

Recall the coupling given in terms of processes $\bfY, \tilde{\bfY}$, which was constructed in Section \ref{sec:coupling}. Under this coupling, one has that if $Y_i(\cdot)\wedge \tilde Y_i(\cdot)$ is reasonably large - say, $O(1)$, then the $i^{th}$ gaps will coalesce in $O(1)$ time. The main challenge is in handling situations where the gaps are very small. Specifically, it is not clear how to obtain apriori control on mixing time in terms of a lower bound on the gap size, uniformly over all gaps.
To handle this difficulty, we introduce a new particle system which captures the worst-case scenario in terms of possible gap configurations. This system has all the particles stacked at the same position, except for the leading particle - which is $O(1)$ distance ahead of the rest. Further, both the leading and the last particles do not move in this system, and thus  we call the system the \emph{frozen boundaries process}. By establishing control over the time required for the last gap to become $O(1)$ in this system, we are able to obtain a suitable estimate for our model.

The proof proceeds by first constructing the frozen boundaries process. Then, we prove that the time required for the last gap to become larger than $(2e)^{-1}$ is of the order of the square of the system size. We will then rigorously establish that the frozen boundaries process is indeed the worst-case scenario; in the sense that the gap between the first two particles `propagate' to the back faster in the original system than in the frozen boundaries process on a suitable ``good set". This together  with  an appropriate estimate on the probability of the corresponding ``bad set" completes the proof of the upper bound.
\subsubsection{Frozen boundaries process}\label{sec:fbp}

For each $1 < m \le n$  consider an $m$-particle system denoted by $\Zbf_m(\cdot):=(Z_1(\cdot),Z_2(\cdot),\dots,Z_m(\cdot))$. The first (i.e. the leading) particle, $Z_1$, is fixed at $1$ and the last particle, $Z_m$, is fixed at 0. That is, $Z_1(t)=1$ and $Z_m(t)=0$, for every $t\ge 0$.  The rest of the particles start from $0$, namely, $Z_i(0)=0$ for $i\in [m]/\{1\}$. Similarly, as in the original particle system, $Z_i(t)\geq  Z_{i+1}(t)$ holds for every $i\in[m-1]$ and $t\ge 0$. Other than the leading particle $(Z_1)$ and the last particle $(Z_m)$, the particles follow the same dynamics as that of the original system. That is, at a given time instant $t$, for every $i\in[m-1]\setminus \{1\}$, the $i^{th}$ particle $Z_i$ has a jump rate of $Z_{i-1}(t)-Z_i(t)$ and its jump size is given by a U$(0,Z_{i-1}(t)-Z_i(t))$-valued random variable. Note that as long as the $i^{th}$ and the $(i-1)^{th}$ particles occupy the same position ($Z_i(\cdot)=Z_{i-1}(\cdot)$) - which happens in this system with positive probability because of the choice of the initial configuration - the $i^{th}$ particle cannot move. Thus, the particles leave the ``$0$" position in the order of their label. We denote the probability measure on the space where $\bfZ_m$ is defined  by $\PP^*$ and the corresponding expectation by $\EE^*$.

In the following lemma, we give an estimate on the time required for the penultimate particle $(Z_{m-1})$ in this system to move ahead by a length of at least $(2e)^{-1}$. Since, the last particle is fixed at 0, this also corresponds to the time needed for the last gap to become at least as large as $(2e)^{-1}$. 
\begin{lemma}\label{lemma.froz.bd}
     Let $\beta(m) \equiv \beta:= \inf \{t>0: Z_{m-1}(t)\geq \frac{1}{2e}\}.$ Then, there exists a  $c \in (0, \infty)$, such that for all $m>1$ $$\EE^*\beta(m) \leq cm^2.$$ 
    
\end{lemma}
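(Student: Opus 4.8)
The plan is to run a Lyapunov-type drift argument for the frozen boundaries process. We may assume $m\ge 3$, the case $m=2$ being trivial since then $Z_{m-1}=Z_1\equiv 1$ and $\beta=0$. Take as potential the total lag of the internal particles behind the frozen leader,
$$h(\zbf) := \sum_{k=2}^{m-1}(1-z_k), \qquad \zbf = (z_1,\dots,z_m),$$
so that $h\ge 0$ and, since $Z_k(0)=0$ for $2\le k\le m-1$, $h(\Zbf_m(0)) = m-2$. In the frozen boundaries dynamics the coordinate $z_k$ changes only when the particle $Z_k$ itself jumps (for $2\le k\le m-1$), which happens at rate $Z_{k-1}-Z_k$ and moves $Z_k$ forward by an independent uniform amount on $(0,Z_{k-1}-Z_k)$; writing $W_j(\cdot):=Z_j(\cdot)-Z_{j+1}(\cdot)$, a one-line computation gives that the generator of $\Zbf_m$ acts on $h$ by
$$\zbf \longmapsto -\frac{1}{2}\sum_{j=1}^{m-2}(z_j-z_{j+1})^2.$$
Since $h$ is bounded and $\Zbf_m$ has total jump rate at most $\sum_{j=1}^{m-1}W_j = 1$ with uniformly bounded jumps, the local martingale in Dynkin's formula is a true martingale, so applying it up to the stopping time $t\wedge\beta$ yields
$$\EE^*\!\left[h(\Zbf_m(t\wedge\beta))\right] \;=\; (m-2) \;-\; \frac{1}{2}\,\EE^*\!\int_0^{t\wedge\beta}\sum_{j=1}^{m-2}W_j(s)^2\,ds.$$

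The next step is to bound the integrand below on $\{s<\beta\}$. By telescoping and $Z_1\equiv 1$ one has $\sum_{j=1}^{m-2}W_j(s) = 1-Z_{m-1}(s)$, so Cauchy--Schwarz over the $m-2$ summands gives
$$\sum_{j=1}^{m-2}W_j(s)^2 \;\ge\; \frac{1}{m-2}\bigl(1-Z_{m-1}(s)\bigr)^2 .$$
For $s<\beta$ we have $Z_{m-1}(s) < \tfrac{1}{2e} < \tfrac12$, so the integrand is at least $\tfrac{1}{4(m-2)}$. Plugging this into the identity above and using $h\ge 0$ gives $0 \le (m-2) - \tfrac{1}{8(m-2)}\,\EE^*[t\wedge\beta]$, i.e. $\EE^*[t\wedge\beta]\le 8(m-2)^2$ for every $t>0$; letting $t\to\infty$ and invoking monotone convergence yields $\EE^*\beta(m)\le 8(m-2)^2 \le 8m^2$, which is the claim with $c=8$.

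The genuinely substantive ingredient is the choice of $h$ and the resulting drift identity: by Cauchy--Schwarz the drift $-\tfrac12\sum_j W_j^2$ is of order $1/m$ for as long as $Z_{m-1}$ has not yet reached $\tfrac{1}{2e}$ (so that $\sum_j W_j = 1-Z_{m-1}$ stays bounded below by a constant), and a drift of order $1/m$ eroding a potential of size $m-2$ is exactly what forces the $m^2$ time scale — the rigorous counterpart of the heuristic that mass spread over $m$ gaps conducts at rate $\sim 1/m$ and must travel $\sim m$ hops. The only technical point requiring a word is the upgrade of the local martingale to a true martingale in Dynkin's formula, which is immediate here because the frozen boundaries process has total jump rate at most $1$ and uniformly bounded jumps.
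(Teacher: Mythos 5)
Your proof is correct, but it follows a genuinely different route from the paper. The paper argues sequentially from front to back: it introduces stopping times $\delta_i := \inf\{t\ge \delta_{i-1}: Z_i(t)\ge (1-\tfrac1m)^{i-1}\}$, shows that each increment $\delta_i-\delta_{i-1}$ is stochastically dominated by an Exponential variable of rate $(1-\tfrac1m)^{i-2}-(1-\tfrac1m)^{i-1}$ (because the gap in front of $Z_i$ stays at least $(1-\tfrac1m)^{i-2}-Z_i$ until the relevant jump), and sums the resulting expectations $\le m(1-\tfrac1m)^{-(i-2)}$ over $i$ to get $O(m^2)$, using $(1-\tfrac1m)^m>\tfrac{1}{2e}$ to conclude $\beta\le \delta_{m-1}$. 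You instead run a global drift argument: with $h(\zbf)=\sum_{k=2}^{m-1}(1-z_k)$ the generator gives exactly $-\tfrac12\sum_{j=1}^{m-2}(z_j-z_{j+1})^2$, and Cauchy--Schwarz together with the telescoping identity $\sum_{j=1}^{m-2}W_j=1-Z_{m-1}>1-\tfrac{1}{2e}>\tfrac12$ on $\{s<\beta\}$ forces a dissipation rate of order $1/m$ on a potential of size $m-2$, yielding $\EE^*[t\wedge\beta]\le 8(m-2)^2$ and hence the claim by monotone convergence (the martingale justification is fine since $h$ is bounded, the total jump rate is at most $1$, and jumps are bounded). I checked the generator computation, the Cauchy--Schwarz step, the treatment of $m=2$, and the stopping/limit arguments; they are all sound, and your total jump rate bound, though written as $\sum_{j=1}^{m-1}W_j=1$ rather than the exact rate $\sum_{j=1}^{m-2}W_j=1-Z_{m-1}$, is still a valid upper bound. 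Your approach is more compact and yields an explicit constant ($c=8$), whereas the paper's sequential construction additionally exhibits how the displacement propagates down the chain (geometrically decaying thresholds), though only the expectation bound is used later, so nothing is lost by your argument.
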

\begin{proof}
    Define a sequence of stopping times, $\delta_1=0$, $\delta_i:= \inf\{t\geq \delta_{i-1}: Z_i(t)\geq (1-\frac{1}{m})^{i-1}\}$. Note that, $(1-\frac{1}{m})^m>\frac{1}{2e}$ holds for every $m\geq 2$. Thus, $\beta\leq_d\delta_{m-1}$.

    We  note that, since the leading particle does not move, $\delta_2$ is distributed as Exp$(1/m)$, consequently $\EE^*\delta_2 = m$.

    Consider now $Z_3$. Let $\clf^{\bfZ}_t := \sigma\{\bfZ_m(s): s \le t\}$. Then
    on the event $\{Z_3(\delta_2) \geq \left(1-\frac{1}{m} \right)^2\}$, 
    $\EE^*((\delta_3-\delta_2) \mid \clf^{\bfZ}_{\delta_2}) =0$ and since $Z_2(t+\delta_2) \ge 1- \frac{1}{m}$ for all $t\ge 0$,
    on the event
    $\{Z_3(\delta_2) < \left(1-\frac{1}{m} \right)^2\}$, 
    $$\cll\left (\delta_3- \delta_2 \mid \clf^{\bfZ}_{\delta_2}\right) \leq_d 
    \mbox{Exp}\left( (1-\frac{1}{m}) - (1-\frac{1}{m})^2\right)$$
    and consequently, on this event,
    $$
    \EE^*\left(\delta_3- \delta_2 \mid \clf^{\bfZ}_{\delta_2}\right) \le \left( (1-\frac{1}{m}) - (1-\frac{1}{m})^2\right)^{-1}
    = \frac{m}{\left(1-\frac{1}{m}\right)}.
    $$
    Combining these observations
    $$\EE^*(\delta_3- \delta_2) \le \frac{m}{\left(1-\frac{1}{m}\right)}.$$


    Continuing similarly, for $Z_i$, if $Z_i(\delta_{i-1})\geq \left(1-\frac{1}{m} \right)^{i-1}$, then $\delta_i=\delta_{i-1}$. If that is not the case,  since $Z_{i-1}(t+\delta_{i-1}) \ge (1- \frac{1}{m})^{i-2}$ for all $t\ge 0$,
    on the event
    $\{Z_i(\delta_{i-1}) < \left(1-\frac{1}{m} \right)^{i-1}\}$, 
    $$\cll\left (\delta_{i}- \delta_{i-1} \mid \clf^{\bfZ}_{\delta_{i-1}}\right) \leq_d 
    \mbox{Exp}\left( (1-\frac{1}{m})^{i-2} - (1-\frac{1}{m})^{i-1}\right)$$
    and consequently, on this event,
    $$
    \EE^*\left(\delta_{i}- \delta_{i-1} \mid \clf^{\bfZ}_{\delta_{i-1}}\right) \le \left( (1-\frac{1}{m})^{i-2} - (1-\frac{1}{m})^{i-1}\right)^{-1}
    = \frac{m}{\left(1-\frac{1}{m}\right)^{i-2}}
    $$
    and thus
    $$\EE^*(\delta_i-\delta_{i-1})\leq \frac{m}{\left(1-\frac{1}{m}\right)^{i-2}}.$$
    Thus, using the fact that $\delta_{m-1}=\sum_{i=2}^{m-1}(\delta_i-\delta_{i-1})$,  we finally have 
    \begin{align}
        \EE^*\beta \leq \EE^* \delta_{m-1}\leq \sum_{i=2}^{m-1} \frac{m}{\left(1-\frac{1}{m}\right)^{i-2}} 
        = m\left[\frac{(\frac{m}{m-1})^{m-2}-1}{\frac{m}{m-1}-1}\right].
    \end{align}
    Since $\lim_{m\to \infty}(\frac{m}{m-1})^{m-2}=e$,  we can a find a constant $c>0$, such that $\left(\frac{m}{m-1}\right)^{m-2}\leq c$ for every $m\in \NN\setminus\{1\}$.
Thus we have that $$\EE^*\beta \leq cm(m-1)\leq cm^2,$$ 
completing the proof of the lemma.
\end{proof}

Having obtained an upper bound for the expected time required for the last gap to become sufficiently large in the frozen boundaries process, we will now prove that this process moves slower than the original particle system driven by any non-decreasing trajectory of the leading particle $X_1(\cdot)$. 

Take any $m \in \mathbb{N}$, $\xbf=(x_1,\dots,x_m) \in \RR^m$, and write $\mathscr{F}_\xbf$ for the class of non-decreasing functions $f:[0,\infty) \rightarrow [x_1,\infty)$. For $\xbf=(x_1,\dots,x_m) \in \RR^m$ and $f \in \mathscr{F}_\xbf$, write $\tilde \PP^{f,m}_\xbf$ for the law of the $m$ particle system $\Xbf_m$ (suppressing $f$ for notational convenience) with $\Xbf_m(0)=\xbf$, $X_1(t)=f(t)$ for all $t \ge 0$, and the remaining particles (given the trajectory of $X_1(\cdot)$) having the same dynamics as in the original system. 


The initial configuration of $\Xbf_m$ is taken to be in the set
\begin{equation}\label{cls0}
    \cls_0(m):= \{(x_1, x_2,\dots,x_{m-1},x_m)\in \RR^m: x_1\ge x_2\ge \dots\ge x_{m-1}\ge x_m=0,\; x_1 \ge 1 \}.
\end{equation}
We will suppress $m$ in $\cls_0(m)$ when clear from the context.
Note that when $\bfX_m$ has initial configuration in $\cls_0$, $Z_i(0)\leq X_i(0)$ for every $i\in [m]$. We now show that with such an initial configuration, for every $i\in[m]$ and $t\ge 0$, and any such choice $f$ of the path of the leading particle, $Z_i(t)\leq_d X_i(t)$.
\newcommand{\sbf}{\textbf{s}}
\begin{lemma}\label{lemma.dom}
    Fix any $m \in \mathbb{N}$. For any $t\ge 0$ and $r\ge 0$,
    $$\PP^*( Z_i(t)\ge r)\leq \inf_{\sbf \in \cls_0}\inf_{f \in \mathscr{F}_\sbf}  \tilde \PP^{f,m}_\sbf (X_i(t) \ge r),$$
    for every $i\in [m].$
\end{lemma}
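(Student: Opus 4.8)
The plan is to prove the stronger pathwise statement that there is a coupling of $\bfZ_m$ (under $\PP^*$) and $\bfX_m$ (under $\tilde\PP^{f,m}_\sbf$, for any fixed $\sbf\in\cls_0$ and $f\in\mathscr F_\sbf$) such that $Z_i(t)\le X_i(t)$ for all $i\in[m]$ and all $t\ge 0$, almost surely. Since the initial configurations satisfy $Z_i(0)=0\le x_i=X_i(0)$ for all $i$ (recall $\sbf\in\cls_0$), and since $Z_1(t)=1\le x_1\le f(t)=X_1(t)$ and $Z_m(t)=0\le X_m(t)$ hold for all $t$, the content is to propagate the ordering $Z_i(\cdot)\le X_i(\cdot)$ through the intermediate particles, $2\le i\le m-1$. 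Once such a coupling is constructed, the lemma follows immediately: $\PP^*(Z_i(t)\ge r)\le \PP(X_i(t)\ge r)=\tilde\PP^{f,m}_\sbf(X_i(t)\ge r)$, and taking the infimum over $\sbf$ and $f$ gives the claim.

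First I would set up the coupling. Fix $f$ and $\sbf$. I would drive both systems by a common family of Poisson clocks, one for each intermediate particle index $i\in\{2,\dots,m-1\}$, together with i.i.d.\ U$(0,1)$ marks attached to the clock rings. Because the jump rate of particle $i$ is its own gap $Y_{i-1}=X_{i-1}-X_i$ (resp.\ $Z_{i-1}-Z_i$), and this rate is state-dependent and differs between the two systems, I would use the standard thinning construction: let particle $i$ in each system have a Poisson clock of a large constant rate $R$ (valid on a bounded time horizon where all gaps stay bounded — one can localize and then let the horizon go to infinity, as in the Dynkin-formula localization used in \eqref{eq:150nn}), and at a ring at time $t$ with mark $U$, particle $i$ in the $\bfX$-system actually jumps iff $U\le (X_{i-1}(t-)-X_i(t-))/R$, in which case its new position is $X_i(t-)+V\,(X_{i-1}(t-)-X_i(t-))$ for an independent U$(0,1)$ mark $V$; and analogously, with the \emph{same} $U$ and the \emph{same} $V$, for the $\bfZ$-system using its own gap. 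One checks directly that each marginal has the correct generator. The key monotonicity gain from using the same $U$ and $V$: if $X_{i-1}(t-)-X_i(t-)\ge Z_{i-1}(t-)-Z_i(t-)$, then whenever the $\bfZ$-particle jumps (i.e.\ $U$ is below the smaller threshold) the $\bfX$-particle jumps too, and the new positions satisfy $X_i(t)=X_i(t-)+V(X_{i-1}(t-)-X_i(t-))\ge Z_i(t-)+V(Z_{i-1}(t-)-Z_i(t-))=Z_i(t)$ provided $X_i(t-)\ge Z_i(t-)$ — because $V$ multiplies the larger gap and is added to the larger base. Moreover the $\bfX$-particle may additionally jump (when $U$ lies between the two thresholds), and such a jump only moves $X_i$ forward, i.e.\ keeps $X_i(t)\ge Z_i(t-)=Z_i(t)$ as long as $X_i(t-)\ge Z_i(t-)$ and $X_{i-1}(t-)\ge Z_i(t-)$ (the new position stays in $(X_i(t-),X_{i-1}(t-)]$).

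The inductive structure is then the natural one, processing indices from the front. I would prove by induction on $i$ that, on the coupled probability space, $Z_j(t)\le X_j(t)$ for all $t\ge 0$ and all $j\le i$. The base case $i=1$ is immediate. For the inductive step from $i-1$ to $i$: we know $X_{i-1}(\cdot)\ge Z_{i-1}(\cdot)$ everywhere, and we want $X_i(\cdot)\ge Z_i(\cdot)$. Consider the (a.s.\ discrete) set of times where either $Z_i$ or $X_i$ jumps. Between such times both are constant, so the ordering is preserved. At a jump time $t$: if only $X_i$ jumps, $X_i$ moves forward and the inequality is maintained. If $Z_i$ jumps — then by the thinning construction $X_i$ jumps simultaneously, and the displayed computation above (using $X_i(t-)\ge Z_i(t-)$ from just before the jump, $X_{i-1}(t-)\ge Z_{i-1}(t-)$ from the induction hypothesis, and the common marks $U,V$) gives $X_i(t)\ge Z_i(t)$. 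The only subtlety is at a time where $X_{i-1}-X_i$ and $Z_{i-1}-Z_i$ are such that the gap ordering might momentarily fail; but since $X_{i-1}\ge Z_{i-1}$ always and $X_i\ge Z_i$ just before the ring, we cannot have the $\bfX$-gap strictly smaller unless $X_i>Z_i$ strictly, and one checks the jump rule still produces $X_i(t)\ge Z_i(t)$ in that case too (the $\bfX$-particle, even if it rings and jumps, lands in $(X_i(t-),X_{i-1}(t-)]\supseteq$ any point the $\bfZ$-particle could reach from below). I would write this case analysis carefully.

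\medskip

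The main obstacle I expect is precisely this last point: the coupling-by-common-marks argument is clean when the \emph{gaps} are ordered in the same direction as the \emph{positions}, but a priori $X_{i-1}-X_i\ge Z_{i-1}-Z_i$ need not hold just from $X_{i-1}\ge Z_{i-1}$, $X_i\ge Z_i$. The resolution is to observe that the relevant comparison only needs the \emph{reachable sets} to be nested: from configuration with ahead-particle at $a_X\ge a_Z$ and self at $b_X\ge b_Z$, the set of post-jump positions is $(b_X,a_X]$ for $\bfX$ and $(b_Z,a_Z]$ for $\bfZ$, and $(b_Z,a_Z]\subseteq(b_Z,a_X]$; choosing the new $\bfX$-position to be $\max$ of what the independent rule gives and the new $\bfZ$-position (a "reflection/pushing-up" modification of the naive common-$V$ coupling) restores $X_i\ge Z_i$ while preserving the $\bfX$-marginal, since conditionally on jumping, any coupling of U$(b_X,a_X)$ with U$(b_Z,a_Z)$ respecting the $\bfX$-marginal is admissible. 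I would therefore replace the "same $V$" rule by: when both jump, sample the $\bfX$-new-position from U$(b_X,a_X)$ and then set the $\bfZ$-new-position via the monotone (quantile) coupling of U$(b_X,a_X)$ onto U$(b_Z,a_Z)$ conditioned to stay below the $\bfX$-position — routine since $(b_Z,a_Z]\subseteq(b_Z,a_X]\subseteq(-\infty,X_i(t)]$ — which automatically gives $Z_i(t)\le X_i(t)$. With that modification the induction goes through without any gap-ordering hypothesis. Finally, the localization to handle the unbounded jump rates (truncating at $\tau_M=\inf\{t:\ X_1(t)\ge M\}$ or at a level where the total displacement exceeds $M$, then sending $M\to\infty$) is exactly as in the proof of Lemma 3.2 and I would invoke it verbatim.
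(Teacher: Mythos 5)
Your overall strategy — construct a pathwise coupling with $Z_i(t)\le X_i(t)$ for all $i,t$, propagate the ordering front-to-back by induction, then take infima over $\sbf$ and $f$ — is exactly the paper's, but the specific coupling you propose has a genuine hole at the crucial step, and your attempted repair does not close it. In your thinning scheme each intermediate particle has its own clock with a common acceptance mark $U$ and a destination drawn uniformly over its \emph{own} gap. The problem arises precisely when the $Z$-gap exceeds the $X$-gap, which the positional ordering does not preclude (e.g.\ $Z_i=0$, $Z_{i-1}=1$, $X_i=0.95$, $X_{i-1}=1$, already possible at time $0$ for $\sbf\in\cls_0$). In that regime there are rings at which \emph{only} the $Z$-particle jumps (the mark $U$ falls between the two acceptance thresholds), and its destination, uniform on $(Z_i,Z_{i-1})$, lands above the motionless $X_i$ with positive probability, destroying the ordering. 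Your fix only re-couples the destinations \emph{on the event that both particles jump}; it says nothing about the only-$Z$-jumps event, which is where the failure occurs. Moreover, within the fix, taking the new $X$-position to be the maximum of a U$(b_X,a_X)$ draw and the new $Z$-position does not preserve the $X$-marginal, and the claimed inclusion $(b_Z,a_X]\subseteq(-\infty,X_i(t)]$ is false since the realized new $X$-position is typically strictly below $a_X$. (The quantile coupling of the two destination laws is fine, since $b_Z\le b_X$, $a_Z\le a_X$ does give U$(b_Z,a_Z)\le_d$ U$(b_X,a_X)$, but again this only handles the simultaneous-jump case.)

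The difficulty cannot be patched by re-coupling destinations alone: what is needed is that any $Z$-jump to a point above $X_i(t-)$ be accompanied by an $X$-jump to a point at least as high, and this forces the acceptance decision and the destination to be tied together. That is exactly what the paper's coupling does: a single jump-event clock runs at rate $\tilde X_1(t)=f(t)$, a common spatial point $U\sim\mathrm{U}(0,\tilde X_1(t))$ is drawn, and in \emph{each} system the particle immediately behind $U$ (if any, subject to the frozen boundaries in the $Z$-system) jumps \emph{onto} $U$. Then if $Z_j$ jumps to $U$ and $U>X_j(t-)$, necessarily $U<Z_{j-1}(t-)\le X_{j-1}(t-)$, so $U$ lies in the $j$-th $X$-gap as well and $X_j$ jumps to the very same point, giving $X_j(t)=Z_j(t)=U$; if instead $U\le X_j(t-)$ the ordering holds trivially, and jumps in which only $X_j$ moves only push $X_j$ forward. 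Your induction skeleton would go through verbatim with this (Hammersley-type, common-spatial-point) coupling in place of the per-particle thinning construction; as written, however, the proposal's monotonicity step fails.
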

\begin{proof}
    It suffices to construct, for every $\sbf \in \cls_0$ and $f \in \mathscr{F}_\sbf$, a coupling $(\tilde \Xbf_m, \tilde \Zbf_m)$ with joint law denoted by $\PP'_\sbf$ such that $\tilde \Xbf_m \overset{d}{=} \Xbf_m $ where $\Xbf_m$ has law $\PP^{f,m}_\sbf$, $\tilde \Zbf_m \overset{d}{=} \Zbf_m $, and for any $t\geq0$ and $j\in [m]$, $\tilde Z_j(t) \leq \tilde X_j(t) $, $\PP'_\sbf$-a.s.
    
    We consider the following coupling. We let $\tilde \Xbf_m(0)=\Xbf_m(0) = \sbf$ and $\tilde\Zbf_m(0)=\Zbf_m(0) = (0, \ldots , 0, 1)$. Writing $\sbf = (x_1, \ldots , x_{m-1}, 0)$, 
    and with $\tilde X_1(t)=f(t),\, t \ge 0$, we construct the remaining processes as follows.
    At any time instant $t$, a (possible) \emph{jump event} occurs at rate $\tilde X_1(t)$, described as follows. First, we draw a random variable, $U\sim$U$(0,\tilde X_1(t))$.
    Jumps in the $\tilde \Xbf_{m|1} = (\tilde X_2, \ldots, \tilde X_m)$ process are described as below.
    \begin{enumerate}
        \item If $U\notin (\tilde X_m(t), \tilde X_1(t))$, no particle in the $\tilde \Xbf_{m|1}$ process jumps.
        \item If $U\in (\tilde X_m(t), \tilde X_1(t))$, then, we find $i\in [m]\setminus\{1\}$ such that $U\in (\tilde X_i(t), \tilde X_{i-1}(t))$. The $i^{th}$ particle, $\tilde X_i$, takes a jump of size, $U-\tilde X_i(t)$ and remaining particles stay unchanged.
    \end{enumerate}
    At the instances $t$ of possible jump events described above, possible jumps of  $\tilde \Zbf_{m|1,m}= (\tilde Z_2, \ldots , \tilde Z_{m-1})$ occur 
    as follows. With $U$ as above,
    \begin{enumerate}
        \item If $U\geq 1$ or $U \le \tilde Z_{m-1}(t)$, no particle in the $\tilde \Zbf_{m|1,m}$ process jumps.
        \item If $\tilde Z_{m-1}(t)<U<1$,  we find $j\in [m]\setminus\{1,m\}$ such that $U\in (\tilde Z_j(t), \tilde Z_{j-1}(t))$. The $j^{th}$ particle, $\tilde Z_j$, takes a jump of size, $U-\tilde Z_j(t)$ and remaining particles stay unchanged.
    \end{enumerate}
    To verify that this is indeed a valid coupling, observe that at time $t$, the jump rate of the $i^{th}$ particle in the $\tilde \Xbf_{m|1}$ process is given by $$\frac{\tilde X_{i-1}(t)-\tilde X_i(t)}{\tilde X_{1}(t)-\tilde X_m(t)}\cdot\frac{\tilde X_{1}(t)-\tilde X_m(t)}{\tilde X_{1}(t)}\tilde X_{1}(t)=\tilde X_{i-1}(t)-\tilde X_i(t).$$ Further, conditional upon $U\in (\tilde X_i(t),\tilde X_{i-1}(t))$, the law of $U$ is given by U$(\tilde X_i(t),\tilde X_{i-1}(t))$. Hence, the jump size distribution of $i^{th}$ particle in $\tilde \Xbf_m$ matches that in the $\Xbf_m$ process. The verification for the $\tilde \Zbf_m$ process follows similarly. This can be checked formally by computing the generators of the Markov processes $\tilde \Xbf_m$ and $\tilde \Zbf_m$.
    
    Next, we argue that, for every $j\in [m]$, $\tilde Z_j(t)\leq \tilde X_j(t)$, for every $t\geq 0$. Note that, since $\tilde Z_1$ is fixed at 1, we  only consider the non-trivial case: $j\neq1$. Also recall that $\tilde Z_j(0)\leq \tilde X_j(0)$ holds for every $j$. Hence, it is enough to show, that if a jump event, as described above, occurs at time $t$, as a result of which, for some $j \in [m-1]$, $\tilde Z_j$ jumps, then, $\tilde Z_j(t)\leq \tilde X_j(t)$, if $\tilde Z_\ell (t-) \leq \tilde X_\ell (t-)$ for all $\ell \in [m-1]$. 
    
    For this purpose, let us recall the uniform random variable $U$ used earlier to describe the jump dynamics. Since we are considering the case where $\tilde Z_j$ jumps at time $t$, we must have that $U$ lies ahead of $\tilde Z_j$, but behind $\tilde Z_{j-1},$ that is $U\in (\tilde Z_j(t-), \tilde Z_{j-1}(t-))$. Furthermore, after this jump, the new position of $\tilde Z_j$ is given by $U$, that is $\tilde Z_j(t)=U.$ Now, let us first consider the case where $U$ lies ahead of $\tilde X_j$, that is $U>\tilde X_j(t-)$. Since we already know that $U$ lies behind $\tilde Z_{j-1}$ ($U<\tilde Z_{j-1}(t-)$), this implies that $U$ necessarily lies behind $\tilde X_{j-1}$ ($U<\tilde X_{j-1}(t-)$). Hence, we have in this case, $U\in (\tilde X_j(t-), \tilde X_{j-1}(t-))$ which implies that at the given time instant $t$, both $\tilde X_{j}$ and $\tilde Z_{j}$ jump to the same position on the real line, given by $U$. That is, we get $\tilde X_{j}(t)=\tilde Z_{j}(t)=U$. Hence, the inequality between the processes still holds after the jump, i.e. $\tilde Z_\ell (t) \leq \tilde X_\ell (t)$ for all $\ell \in [n-1]$.  On the other hand, if $U \le \tilde X_{j}(t-)$, then the inequality clearly holds as following the jump, we have $\tilde Z_{j}(t)=U \le \tilde X_j(t)$. Thus we conclude
that for every $j\in [m]$ and for any $t\geq 0$,  $\tilde Z_j(t) \leq \tilde X_j(t) $, $\PP'_\sbf$-a.s. and the result follows.
\end{proof}

\begin{remark}\label{rmk:dom}
   Let $n \ge 3$ and $3 \le m \le n$. As an immediate consequence of Lemma \ref{lemma.dom}, we now have the following stochastic dominance property for the $n'$-th particle in the $n$-dimensional system for any $m-1 \le n' \le n-1$, when the $(n'+1)$-th particle starts at $0$ and the $(n'-m+2)$-th particle starts from above $1$.
   Let,
   $$\cls_0^n(m,n'): = \{(x_1, x_2,\dots, x_n)\in \RR^n: x_1\ge x_2\ge \dots\ge x_{n-1}\ge x_n, \; x_{n'+1}=0,\; x_{n'-m+2} \ge 1 \}.$$
   Then, for any $m-1 \le n' \le n-1$,
    $$\PP^*\left( Z_{m-1}(t) \ge (2e)^{-1}\right)\leq \inf_{\sbf \in \cls_0^n(m,n')}\tilde  \PP_\sbf \left(X_{n'}(t) \ge (2e)^{-1}\right), \quad t \ge 0.$$ This 
    says that, letting
    $\beta^{n'}_X:= \inf\{t\ge 0: X_{n'}(t)\ge (2e)^{-1} \}$, we have, 
    $$\sup_{m-1 \le n' \le n-1} \ \sup_{\sbf\in \cls_0^n(m,n')}\tilde \EE_\sbf\beta^{n'}_X\le \EE^*\beta(m).$$ 
\end{remark}
As an immediate consequence of Remark \ref{rmk:dom} and Lemma \ref{lemma.froz.bd}, we have the following corollary.
\begin{corollary}\label{cor:xi}
   Fix $n \ge 3$. For $3 \le m \le n$ and $m-1 \le n' \le n-1$, let $$\xi^{n'}(m):= \inf\{t\ge0:X_{n'+1}(t)>0\}\wedge \beta^{n'}_X.$$ Then, there exists $c>0$, such that for every $m= 3, \ldots, n$,
    $$\sup_{m-1 \le n' \le n-1} \ \sup_{\sbf\in \cls_0^n(m,n')}\tilde \EE_\sbf \xi^{n'}(m) \le cm^2.$$
\end{corollary}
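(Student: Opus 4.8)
The plan is short, because essentially all the work has already been done. The key observation is the pathwise inequality
$$\xi^{n'}(m) = \inf\{t\ge 0: X_{n'+1}(t)>0\}\wedge \beta^{n'}_X \le \beta^{n'}_X,$$
which is immediate from the definition of $\xi^{n'}(m)$ as a minimum. Consequently $\tilde\EE_\sbf \xi^{n'}(m) \le \tilde\EE_\sbf \beta^{n'}_X$ for every $m-1 \le n' \le n-1$ and every $\sbf \in \cls_0^n(m,n')$.

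Next I would take the supremum over $n'$ in the range $\{m-1, \ldots, n-1\}$ and over initial configurations $\sbf \in \cls_0^n(m,n')$, and invoke the stochastic-dominance estimate recorded in Remark \ref{rmk:dom}, namely $\sup_{m-1 \le n' \le n-1}\sup_{\sbf \in \cls_0^n(m,n')} \tilde\EE_\sbf \beta^{n'}_X \le \EE^*\beta(m)$. This reduces the corollary to bounding $\EE^*\beta(m)$, which is exactly the content of Lemma \ref{lemma.froz.bd}: there is a constant $c \in (0,\infty)$, not depending on $m$, with $\EE^*\beta(m) \le c m^2$ for all $m > 1$. Chaining the three bounds yields $\sup_{m-1 \le n' \le n-1}\sup_{\sbf \in \cls_0^n(m,n')} \tilde\EE_\sbf \xi^{n'}(m) \le c m^2$ with the same $c$, for every $m = 3, \ldots, n$.

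There is no real obstacle here. The substantive steps --- the frozen-boundaries coupling and monotonicity (Lemma \ref{lemma.dom} and the discussion in Remark \ref{rmk:dom}), together with the $O(m^2)$ hitting-time estimate for the penultimate particle in the frozen-boundaries process (Lemma \ref{lemma.froz.bd}) --- are already in place, and the only new ingredient, the bound $\xi^{n'}(m) \le \beta^{n'}_X$, is a triviality. The one point worth emphasizing in the write-up is that the resulting estimate is uniform over $n'$, which is guaranteed because Remark \ref{rmk:dom} is itself stated uniformly over $n'$: an $m$-particle block sitting anywhere inside the $n$-particle system --- as long as its rear particle starts at $0$ and the particle $m-1$ places ahead of it starts above $1$, i.e. $\sbf \in \cls_0^n(m,n')$ --- dominates, coordinatewise in distribution, the corresponding frozen-boundaries process of size $m$, so the time for $X_{n'}$ to reach $(2e)^{-1}$ is dominated by $\beta(m)$ regardless of the location $n'$.
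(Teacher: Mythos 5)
Your proposal is correct and matches the paper's (implicit) argument: the paper states Corollary \ref{cor:xi} as an immediate consequence of Remark \ref{rmk:dom} and Lemma \ref{lemma.froz.bd}, exactly via the pathwise bound $\xi^{n'}(m)\le \beta^{n'}_X$, the uniform-in-$n'$ dominance $\sup_{n'}\sup_{\sbf}\tilde\EE_\sbf\beta^{n'}_X\le \EE^*\beta(m)$, and the estimate $\EE^*\beta(m)\le cm^2$ with $c$ independent of $m$. Nothing further is needed.
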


We will now use the above corollary to complete
the proof of the mixing time upper bound.

\subsubsection{Proof of the upper bound}\label{subsec:tmixup}
    As discussed earlier, the proof is based on considering a suitable coupling and obtaining bounds on $t_\text{mix}$ using the tail probability of the coupling time by appealing to the inequality in \eqref{eq:tmix.up}. Thus, once again, we consider the coupling between particles $(\Xbf, \tilde \Xbf)$ (resp. gaps ($\Ybf,\tilde \Ybf$)) introduced in Section \ref{sec:coupling}, where $\Ybf(0)$ has some initial distribution $\mu \in \clp(\RR_+^{n-1})$  and $\tilde \Ybf$ is stationary: for every $t\ge0$, $\tilde \Ybf(t)\sim\pi_n$. 
    Denote the probability measure on the space where these processes are defined as $\PP_{\mu, \pi_n}$.
    We denote the corresponding coupling time by $\tau_\text{coup}$. The idea of the proof is to identify a `good' set on which we can invoke Corollary \ref{cor:xi}. We will then use the stationarity of $\tilde \Ybf$ to control the probability of the complement of this good set. Finally we combine these two steps to obtain an upper bound for the tail probability of $\tau_\text{coup}$.
    
    We first construct the good set. Fix $r>0$ and suppose that  $n$ is large enough so that $n-1 \ge \lfloor r\log n\rfloor + 1$. Consider the set: $$A_{n,r}:= \{\exists  i\in [\lfloor r\log n\rfloor + 1, n-1] \text{ and } l\in [n^3],\;  \text{such that } \tilde Y_j(l)<1 ,\; \forall j \in [i-\lfloor r\log n\rfloor\, , \,i] \}.$$ 
    Recall that $\tilde \Ybf$ is stationary and therefore the probability of $A_{n,r}$ can be estimated as follows. 
    \begin{align}
        \PP_{\mu, \pi_n}(A_{n,r})=&\PP_{\mu, \pi_n}\left(\bigcup_{l=1}^{n^3}\bigcup_{i=\lfloor r\log n\rfloor + 1}^{n-1} \{ \tilde Y_j(l)<1 ,\; \forall j \in [i-\lfloor r\log n\rfloor\, , \,i]\right) \nonumber\\
        &\leq \sum_{l=1}^{n^3}\sum_{i=\lfloor r\log n\rfloor + 1}^{n-1} \PP_{\mu, \pi_n} (\tilde Y_j(l)<1 ,\; \forall j \in [i-\lfloor r\log n\rfloor\, , \,i])\nonumber\\
        &\le n^3(n- r\log n) \pi_n(\ybf \in \RR_+^{n-1}: y_j < 1,\; \forall j \in [i-\lfloor r\log n\rfloor\, , \,i])\nonumber\\ 
        &\leq n^3(n-r\log n)(1-e^{-1})^{r\log n}.
        \label{eq:bad.set}
    \end{align}
    Observe that on $A_{n,r}^c$ (which is our good set), if we consider any particle in the $\tilde \Xbf$ process, then at all integer times $ \le n^3$, we can find at least one gap among the $r\log n$ gaps preceding it which is larger than 1. This makes the particle configuration locally resemble those described by $\cls_0$ defined in \eqref{cls0} and hence, enables us to compare the system locally to the frozen boundaries process. 

    Recall from Section \ref{sec:coupling} that under our coupling construction, particles (and therefore the gaps) in the two system coalesce sequentially from front to back. Also recall that under our coupling $X_1(t) = \tilde X_1(t)$ for all $t \ge 0$.
    Thus, we define the stopping times: $\tau'_0=0$ and for $k\in [n-1]$,
    $$\tau'_{2k-1}:= \left\{t\geq \lceil\tau'_{2k-2}\rceil: Y_{k}(t)\wedge \tilde Y_{k}(t) \geq \frac{1}{2e} \right\} \wedge \inf \{ t\geq \lceil\tau'_{2k-2}\rceil: Y_{k}(t)=\tilde Y_{k}(t) \}, $$
    $$\text{and }\tau'_{2k}:= \inf \{ t\geq \tau'_{2k-1}: Y_{k}(t)=\tilde Y_{k}(t) \}.$$
    Note that $\tau_\text{coup}\leq \tau'_{2(n-1)}$. 
    
    Recall that, under our coupling, at any given time $t$, the $k^{th}$ gaps will coalesce at rate $Y_{k}(t)\wedge \tilde Y_{k}(t)$, and before the slower particle moves, $Y_{k}(t)\wedge \tilde Y_{k}(t)$ is non-decreasing. Thus, for $t\ge \tau'_{2k-1}$, the coalescence of the $k^{th}$ gap occurs at a rate of at least $(2e)^{-1}$. Thus, for every $k\in [n-1]$, we have 
    \begin{equation}\label{eq:tau_2k}
        \EE_{\mu, \pi_n}(\tau'_{2k} - \tau'_{2k-1})\leq 2e.
    \end{equation}
    Now consider $(\tau'_{2k-1} - \tau'_{2k-2})$.
    We start by considering $k\in [\lfloor r\log n \rfloor]$. Let  $$\tau_{0,k}:= \inf\{t\ge \tau'_{2k-2}: Y_1(t)\wedge \tilde Y_1(t)\ge 1\}.$$ 
    Observe that
    $$\tau_{0,k} \le \tau'_{0,k}:= \inf\{t\ge \tau'_{2k-2}: X_1(t)-X_1(t-) = \tilde X_1(t)-\tilde X_1(t-)>1 \}.$$ Since the probability that a jump of $X_1$ (equivalently, of $\tilde X_1$) will have size larger than 1 is given by $e^{-1}$, we have that the number of jumps of size larger than 1 is a Poisson process with rate $e^{-1}$. Thus, 
    $$\EE_{\mu, \pi_n}(\tau_{0,k} - \tau'_{2k-2}) \leq \EE_{\mu, \pi_n}(\tau_{0,k}' - \tau'_{2k-2})=e.$$
    Furthermore, $(\tau'_{2k-1}-\tau_{0,k})_+$ - that is, the additional time after $\tau_{0,k}$ required for $Y_k(\cdot)\wedge \tilde Y_k(\cdot)$ to become larger than $(2e)^{-1}$ can be estimated using the stopping time $\xi$ defined in Corollary \ref{cor:xi}.
    Indeed, letting $\clf_t \doteq \sigma\{(\Xbf(s), \tilde\Xbf(s)): 0 \le s \le t\}$, conditioned on $\clf_{\tau_{0,k}}$, if we consider the position of the slower particle in the pair $(X_{k+1},\tilde X_{k+1})$ at time $\tau_{0,k}$ along with the locations of the first $k$ particles which have already coalesced, then the configuration of these  $(k+1)$ particles, re-centered to have the $(k+1)$-th slower particle at time $\tau_{0,k}$  at location $0$, belongs to the class $\cls_0$ for $m=k+1.$ Consider the evolution of this $k+1$-sized particle system for $t \ge \tau_{0,k}$, conditioned on $\clf_{\tau_{0,k}}$.  
    Note that, if this slower particle jumps before its gap from the particle in front (namely $Y_k(\cdot)\wedge \tilde Y_k(\cdot)$) becomes larger than $(2e)^{-1}$, then the $(k+1)^{th}$ particles coalesce anyway. 
    Hence, using Corollary \ref{cor:xi} with $n'=k$ and $m= k+1$, we  conclude that for any $k\in[\lfloor r\log n\rfloor]$,  $$\EE_{\mu, \pi_n}(\tau'_{2k-1}-\tau_{0,k})\le c(r\log n +1)^2.$$
    Thus, for $k\in [\lfloor r\log n\rfloor]$, 
    \begin{equation}\label{eq:tau_k.small}
        \EE_{\mu, \pi_n}((\tau'_{2k-1} - \tau'_{2k-2}))\leq e+ c(r \log n +1)^2.
    \end{equation}
    
    Next, we consider $k>r\log n$. 
    Recall that at $\tau'_{2k-2}$, the first $(k-1)$ gaps have already coalesced and so, for every $j\in [k-1]$ and $t>\tau'_{2k-2}$, $Y_j(t)=\tilde Y_j(t)$. 
Consider the set
    $$B_{k,r} = \{\mbox{there exists } j\in \{ k, k-1,\; k-2,\dots,\; k-\lfloor r\log n \rfloor +1\} \mbox{ such that } Y_j(\lceil\tau'_{2k-2}\rceil)=\tilde Y_j(\lceil\tau'_{2k-2}\rceil)\ge 1\}.
    $$
    Then, 
 on the event $B_{k,r}$, the locations of the particles labeled $\{k,k-1,\dots,j\}$ along with the slower particle in the pair $(X_{k+1},\tilde X_{k+1})$ at the time instant $\lceil\tau'_{2k-2}\rceil$, re-centered so that this slower particle is at $0$,   define a configuration  in the class $\cls_0$ with $m=k-j +2$. Thus, using the fact that $k-j\le r\log n-1$, we have from Corollary \ref{cor:xi}, applied with $n'=k$ and $m=k-j+2$, that on $B_{k,r}$,
    \begin{equation}\label{eq:tau_k.large0}
        \EE_{\mu, \pi_n}\left((\tau'_{2k-1} - \tau'_{2k-2}) \, \vert \, \mathcal{F}_{\lceil\tau'_{2k-2}\rceil}\right)\le c(r\log n +1 )^2.
    \end{equation}
    Thus,
     \begin{equation}\label{eq:tau_k.large}
        \EE_{\mu, \pi_n}\left((\tau'_{2k-1} - \tau'_{2k-2}){\bf1}_{B_{k,r}}\right)\le c(r\log n +1 )^2.
    \end{equation}
    We also note that for $k = \lfloor r \log n\rfloor+1, \ldots , n-1$,
    \begin{equation}\label{eq:535n}
    A_{n,r}^c \cap \{\tau'_{2k-2} \le (n-1)n^2\} \subset B_{k,r}.\end{equation}
    Combining this with \eqref{eq:tau_2k} and Markov's inequality, we obtain, for $k = \lfloor r \log n\rfloor+1, \ldots , n-1$,
\begin{align}
    \PP_{\mu, \pi_n}\left(\tau'_{2k} > kn^2, \, \tau'_{2k-2} \le (k-1)n^2, \, A_{n,r}^c\right) &\le \PP_{\mu, \pi_n}\left(\tau'_{2k-1} - \tau'_{2k-2} > n^2/2,  \, B_{k,r}\right)\nonumber\\
    &\quad + \PP_{\mu, \pi_n}\left(\tau'_{2k} - \tau'_{2k-1} > n^2/2\right)\nonumber\\
    &\le \frac{2c(r\log n +1 )^2}{n^2} + \frac{4e}{n^2}.\label{eq:459n}
\end{align}
Using \eqref{eq:bad.set},
\begin{align*}
 \PP_{\mu, \pi_n}\left(\tau'_{2(n-1)} > (n-1)n^2\right) &\le  
\PP_{\mu, \pi_n}\left(\tau'_{2(n-1)} > (n-1)n^2, A_{n,r}^c\right) + \PP_{\mu, \pi_n}(A_{n,r})\\
&\le  \PP_{\mu, \pi_n}\left(\tau'_{2(n-1)} > (n-1)n^2, A_{n,r}^c\right) + n^3(n-r\log n)(1-e^{-1})^{r\log n}.
\end{align*}
Also, using \eqref{eq:459n},
\begin{align*}
\PP_{\mu, \pi_n}\left(\tau'_{2(n-1)} > (n-1)n^2, A_{n,r}^c\right) &\le 
\PP_{\mu, \pi_n}\left(\tau'_{2(n-1)} > (n-1)n^2, \tau'_{2(n-2)} \le  (n-2)n^2, A_{n,r}^c\right)
\\
&\quad + \PP_{\mu, \pi_n}\left(\tau'_{2(n-2)} >  (n-2)n^2, A_{n,r}^c\right)\\
&\le \frac{2c(r\log n +1 )^2}{n^2} + \frac{4e}{n^2}
+ \PP_{\mu, \pi_n}\left(\tau'_{2(n-2)} >  (n-2)n^2, A_{n,r}^c\right).
\end{align*}
Proceeding similarly, using \eqref{eq:459n}, with $k= n-2, n-3, \ldots , \lfloor r \log n\rfloor +1$, and using \eqref{eq:tau_2k} and \eqref{eq:tau_k.small} for $k\in [\lfloor r\log n \rfloor]$, we now get for some $c'>0$ (not depending on $n$)
\begin{align}\label{eq:largetail}
    \PP_{\mu, \pi_n}\left(\tau'_{2(n-1)} > (n-1)n^2\right) 
    &\le \frac{c'(r\log n +1 )^2}{n} + n^3(n-r\log n)(1-e^{-1})^{r\log n},
\end{align}
We now estimate 
$\PP_{\mu, \pi_n}\left(\tau'_{2(n-1)} > t\right)$ for $t \in (0, (n-1)n^2)$. From \eqref{eq:tau_k.large} and \eqref{eq:535n} we see that
\begin{multline}
    \EE_{\mu, \pi_n}\left((\tau'_{2k-1} \wedge (n-1)n^2 - \tau'_{2k-2} \wedge (n-1)n^2) \, \textbf{1}_{A_{n,r}^c}\right)\\
    \le \EE_{\mu, \pi_n}\left((\tau'_{2k-1}  - \tau'_{2k-2})  \, \textbf{1}_{\{A_{n,r}^c, \tau'_{2k-2} \le (n-1)n^2\}}\right)
    \le \EE_{\mu, \pi_n}\left((\tau'_{2k-1}  - \tau'_{2k-2})  \, \textbf{1}_{B_{k,r}}\right)
    \le c(r\log n +1 )^2. \label{eq:tinter}
\end{multline}
Combining \eqref{eq:tinter} with \eqref{eq:tau_2k} and summing over $k$, we get
\begin{align}\label{eq:tinterbd}
    \EE_{\mu, \pi_n}\left((\tau'_{2(n-1)} \wedge (n-1)n^2) \textbf{1}_{A_{n,r}^c}\right) \le 2e n + cn (r\log n +1 )^2.
\end{align}
Finally, for $t \in (0, (n-1)n^2)$, using \eqref{eq:bad.set}, \eqref{eq:largetail} and \eqref{eq:tinterbd}, we obtain
\begin{align*}
    \PP_{\mu, \pi_n}(\tau_\text{coup}>t) &\le \PP_{\mu, \pi_n}\left(\tau'_{2(n-1)} > t\right)\\
    &\le \PP_{\mu, \pi_n}\left(\tau'_{2(n-1)} > (n-1)n^2\right) + \PP_{\mu, \pi_n}\left(\tau'_{2(n-1)} \wedge (n-1)n^2 > t, \,A_{n,r}^c \right) + \PP_{\mu, \pi_n}(A_{n,r})\\
    & \le \frac{c'(r\log n +1 )^2}{n} + \frac{2e n + cn (r\log n +1 )^2}{t} + 2 n^3(n-r\log n)(1-e^{-1})^{r\log n}.
\end{align*}
Now choosing $n_0$ and $r$ sufficiently large and taking 
$t= c^* n(\log n)^2$ for a sufficiently large $c^*$, we have for all $n \ge n_0$,
$$
\PP_{\mu, \pi_n}(\tau_\text{coup}> c^* n(\log n)^2) \le \frac{1}{4},
$$
which implies $t_\text{mix}\le c^* n(\log n)^2$ for $n \ge n_0$. For $n < n_0$, the finiteness of $t_\text{mix}$ follows from Theorem \ref{thm:unif.erg} and thus, we can choose $c^*$ large enough so that the above bound holds for all $n$. Since the choice of $c^*, r$ and $n_0$ is independent of $\mu$, this completes the proof of the upper bound.
    \hfill
\qedsymbol

\subsection{Proof of Theorem \ref{thm:tmix}}
Section \ref{subsec:tmixlow} proves the lower bound in Theorem \ref{thm:tmix} while Section \ref{subsec:tmixup} proves the upper bound. This completes the proof of the theorem. 
\hfill
\qedsymbol
\subsection{Functional Limit Theorem}
In this section we prove Corollary \ref{cor:fclt}.
Recall that for a Polish space $\cls$, $\clp(\cls)$ denotes the space of probability measures on $\cls$ equipped with the topology of weak convergence.
This topology can be metrized by the {\em bounded-Lipschitz} distance defined as follows. Let
$$\mbox{BL}(\cls) := \{f: \cls \to \RR \mbox{ such that } \sup_{x\in \cls} |f(x)| \le 1, \; |f(x)-f(y)| \le |x-y| \mbox{ for all } x,y\in \cls\}.$$
For $\mu,\nu \in \clp(\cls)$ define
$$\dbl(\mu, \nu) := \sup_{f \in \mbox{BL}(\cls)}\left |\int f d\mu - \int f d\nu\right|.$$
Then $\dbl$ defines a distance on $\clp(\cls)$ and a sequence $\mu_n$ in $\clp(\cls)$ converges weakly to $\mu \in \clp(\cls)$ if and only if $\dbl(\mu_n, \mu) \to 0$.

For $n \in \NN$, consider on some probability space random variables
$(X^n_1(\infty), \ldots X^n_n(\infty))$ such that
$X^n_1(\infty)=0$ and
with $Y^n_i(\infty) = X^{n}_i(\infty) - X^{n}_{i+1}(\infty)$, $i = 1, \ldots , n-1$,
$Y^n_1(\infty), \ldots , Y^n_{n-1}(\infty)$ are iid Exp$(1)$. 
Define $\hat X^n_i(\infty) := X^n_i(\infty)/n$,
$\hat Y^n_j(\infty) := Y^n_j(\infty)/n$, $i \in [n]$, $j \in [n-1]$.
For $x \in [0,1]$, define
$$
U^n_{\infty}(x) :=  \sqrt{n}\left(\hat X^n_1(\infty) - \hat X^n_{\lfloor nx\rfloor}(\infty) - x\right).
$$
Then, denoting the probability law of $U^n_{\infty}$
on $\cld([0,1]:\RR)$ as $\Theta_n$ and the probability law of a standard Brownian motion $\{W(t):0\le t \le 1\}$ on $\cld([0,1]:\RR)$ as $\Theta$, we have by Donsker's theorem that 
$\dbl(\Theta_n, \Theta) \to 0$ as $n\to \infty$.
Now let $U^n$ be as in the statement of the corollary and denote by $\bar \Theta^n$ the distribution of $U^n$ on $\cld([0,1]:\RR)$.
Then
$$
\dbl(\bar \Theta^n, \Theta) \le \dbl(\bar \Theta^n, \Theta^n) + \dbl(\Theta^n, \Theta).
$$
Also, with $d$ as in \eqref{eq:913n} and $c_2$ as in Theorem \ref{thm:tmix}, we have from this theorem and \eqref{eq:911n}, 
\begin{align*}
\dbl(\bar \Theta^n, \Theta^n) &\le \|\bar \Theta^n-\Theta^n\|_{\text{TV}} \le \|\PP^{n^{-1}t_n}(\mu_n, \cdot) - \pi_n\|_{\text{TV}}\\
&\le d\left(\frac{\alpha_n}{c_2} c_2 n(\log n)^2\right)
\le d\left(\frac{\alpha_n}{c_2} t_{\text{mix}}\right) \le 2^{-\alpha_n/c_2}.
\end{align*}
Thus
$$\dbl(\bar \Theta^n, \Theta) \le 2^{-\alpha_n/c_2} + \dbl(\Theta^n, \Theta).$$
The result follows on sending $n\to \infty$.

\section{Leader jumps - Power law}\label{sec:heavy-tail}
In this section, we consider the case where the leader's jump sizes have a heavy-tailed distribution. We prove that if the leader's jump sizes have $k$ finite moments, then, at stationarity, the gap sizes will have at least $(k+1)$ finite moments.
\subsection{Proof of Theorem \ref{thm:poly_tails}}
    We begin with an outline of the proof. Using the fact, that $\EE_\theta Z^k<\infty$, we will first show that the $(k+1)^{th}$ moment of first gap, $Y_1$, is finite under stationarity. We will then show that the existence of the $(k+1)^{th}$ moment for the $(i-1)^{th}$ gap at stationarity implies the same for the $i^{th}$ gap. Using this argument recursively will complete the proof of the theorem. Both of these steps are based on using  drift properties of  suitable Lyapunov functions.
   
    Define $\psi_1:\RR_+^{n-1}\to\RR$  as $\psi_1(\ybf)=y_1^k$. Then
    $$\cll_n\psi_1(\ybf) = \EE_\theta[(y_1+Z)^k-y_1^k] + y_1\EE_U [U^k y_1^k -y_1^k].$$
   Writing $\mu_j$ to denote the $j^{th}$ moment of $Z$, and noting that the $k^{th}$ moment of a U$(0,1)$-valued random variable is given by $\frac{1}{k+1}$, we obtain 
    \begin{align}
        \cll_n\psi_1(\ybf) &=  \sum_{j=1}^k \binom{k}{j}y_1^{k-j}\mu_j - \frac{k}{k+1}y_1^{k+1}.
    \end{align}
    Since the dominant term on the right hand side above is $-\frac{k}{k+1}y_1^{k+1}$ when $y_1$ is large, we can  find a finite constant $c_k$ such that 
    \begin{equation}\label{eq:g1}
        \cll_n \psi_1(\ybf) \leq -\frac{k}{2(k+1)}y_1^{k+1} + c_k \mbox{ for all } \ybf \in \RR_+^{n-1}.
    \end{equation}
Fix $m \in \NN$ and define $\psi_1^{(m)}: \RR_+^{n-1} \to \RR$ as $\psi_1^{(m)}(\ybf) := y_1^k \wedge m$.
For $t>0$, let $G_t, G_t^{(m)}: \RR_+^{n-1} \to \RR$ be defined as
\begin{equation}\label{eq:ggm}
G_t(\ybf) := \frac{1}{t}[\psi_1(\ybf) - \EE_{\ybf}(\psi_1(\Ybf(t)))], \;
G_t^{(m)}(\ybf) := \frac{1}{t}[\psi^{(m)}_1(\ybf) - \EE_{\ybf}(\psi^{(m)}_1(\Ybf(t)))].
\end{equation}
Note that, for each $t>0$ and $\ybf \in \RR_+^{n-1}$, $|G_t(\ybf)| < \infty$. This is a consequence of the inequality $Y_1(t) \le X_1(t)$ (assuming that the leading particle starts from $0$ without loss of generality) and the finiteness of $\EE_{\theta}Z^k$.
Also note that when $\psi_1(\ybf) \le m$, $G_t^{(m)}(\ybf) \ge G_t(\ybf)$ and when $\psi_1(\ybf) > m$,
$G_t^{(m)}(\ybf) \ge 0$. Thus we have
\begin{equation}
G_t^{(m)}(\ybf) \ge G_t(\ybf) \wedge 0 \; \mbox{ for all } \ybf \in \RR_+^{n-1}.
\end{equation}
Also, letting, for $r>0$, 
$\tau_r := \inf\{t \ge 0: y_1 \ge r\}$, we have
$$
\psi_1(\bfY(t\wedge \tau_r)) - \psi_1(\bfy)
= \int_0^{t\wedge \tau_r} \cll_n\psi_1(\bfY(s)) ds
+ M(t \wedge \tau_r)
$$
where $t \mapsto M(t \wedge \tau_r)$ is a martingale under $\PP_{\ybf}$. Thus, from \eqref{eq:g1},
\begin{equation}\label{eq:1017nn}
\EE_{\ybf}[\psi_1(\bfY(t\wedge \tau_r))]
- \psi_1(\bfy) = \EE_{\ybf} \int_0^{t\wedge \tau_r}
\cll_n\psi_1(\bfY(s)) ds \le c_k t
\end{equation}
Sending $r \to \infty$ and using monotone convergence
\begin{equation}\label{eq:1228n}\EE_{\ybf}[\psi_1(\bfY(t))]
- \psi_1(\bfy) \le c_k t \mbox{ for all } \ybf \in \RR_+^{n-1}.\end{equation}
Thus,
$$G_t^{(m)}(\ybf) \ge G_t(\ybf) \wedge 0
= \left(\frac{1}{t}[\psi_1(\ybf) - \EE_{\ybf}(\psi_1(\Ybf(t)))]\right) \wedge 0 \ge -c_k.$$
Also, using monotone convergence again, for all
$\ybf \in \RR_+^{n-1}$, $G_t^{(m)}(\ybf) \to G_t(\ybf)$ as $m \to \infty$. Using Fatou's lemma and recalling that $\pi_n$ is stationary, we have,
\begin{equation}\label{eq:1030n} \int G_t(\ybf) \pi_n(d\ybf) \le \liminf_{m\to \infty} \int G_t^{(m)}(\ybf) \pi_n(dy) =0.
\end{equation}
From \eqref{eq:g1} and the first equality in \eqref{eq:1017nn},
$$
\EE_{\ybf}[\psi_1(\bfY(t\wedge \tau_r))]
- \psi_1(\bfy) \le -\frac{k}{2(k+1)}
\EE_{\ybf} \int_0^{t\wedge \tau_r} [Y_1(s)]^{k+1} ds + c_kt.
$$
Thus
$$
\frac{1}{t}\EE_{\ybf} \int_0^{t\wedge \tau_r}
 [Y_1(s)]^{k+1} ds \le
 -\frac{2(k+1)}{k}\frac{1}{t}\left(\EE_{\ybf}[\psi_1(\bfY(t\wedge \tau_r))]
- \psi_1(\bfy)\right) + 2c_k\frac{k+1}{k}.
$$
Letting $r\to \infty$ and using monotone convergence (both on the left and the right side),
$$
\frac{1}{t}\EE_{\ybf} \int_0^{t}
 [Y_1(s)]^{k+1} ds \le
 \frac{2(k+1)}{k}G_t(y) + 2c_k\frac{k+1}{k}.
$$
Integrating with respect to $\pi_n$, we now have from \eqref{eq:1030n},
$$
\frac{1}{t}\int_0^{t}
 \EE_{\pi_n} [Y_1(s)]^{k+1} ds \le
 \frac{2(k+1)}{k}\int G_t(y) \pi_n(d\ybf)+ 2c_k\frac{k+1}{k} \le 2c_k\frac{k+1}{k} .
$$
Thus, since $\pi_n$ is stationary
$$
\EE_{\pi_n} [Y_1(t)]^{k+1} \le 2c_k\frac{k+1}{k} <\infty \mbox{ for all } t \ge 0.$$

    Next, let us assume that for some $i= 2, \ldots , n-2$ the $(k+1)^{th}$ moment of the $(i-1)^{th}$ gap is finite under stationarity, namely
    \begin{equation}\label{eq:332n}
\EE_{\pi_n} [Y_{i-1}(t)]^{k+1} <\infty \mbox{ for all } t \ge 0.
    \end{equation}
    Consider now the $i^{th}$ gap. 
Let $\psi_i, \psi_i^{(m)}:\RR_+^{n-1}\to\RR$  as $\psi_i(\ybf)=y_i^k$, $\psi_i^{(m)}(\ybf)=y_i^k\wedge m$.
Note that 
$$\cll_n\psi_i(\ybf)= A(\ybf) + B(\ybf),$$ where $$A(\ybf):= y_i\int_0^1[u^ky_i^k - y_i^k]\; du,\; B(\ybf):= y_{i-1}\int_0^1 [(y_i+uy_{i-1})^k-y_i^k]\; du.$$ 
Then
$$
A(\ybf) = -\frac{k}{k+1} y_i^{k+1}
$$
and
$$
B(\ybf) \le y_{i-1}\int_0^1 (y_i+uy_{i-1})^k\; du
\le \frac{1}{k+1} (y_i+y_{i-1})^{k+1}.
$$
Using a weighted Young's inequality it now follows that there are $a_k, h_k>0$ such that
\begin{equation}\label{eq:359n}
\cll_n\psi_i(\ybf)= A(\ybf) + B(\ybf) \le -a_k y_{i}^{k+1} + h_k y_{i-1}^{k+1}, \mbox{ for all } \ybf \in \RR_+^{n-1}.
\end{equation}
Now, for $m \in \NN$, define
$G^{i}_t$, $G^{i, (m)}_t$ as in \eqref{eq:ggm} replacing there $\psi_1, \psi_1^{(m)}$ with
$\psi_i, \psi_i^{(m)}$, respectively.
Then, by a localization argument, as in \eqref{eq:1228n} we get
\begin{equation}\label{eq:1231n}
\EE_{\ybf}[\psi_i(\bfY(t))]
- \psi_1(\bfy) \le h_k \int_0^{t} \EE_{\ybf}(Y_{i-1}^{k+1}(s)) ds.
\end{equation}
This, similar to before, shows that
$$
G^{i, (m)}_t(\ybf) + \frac{h_k}{t} \int_0^{t} \EE_{\ybf}(Y_{i-1}^{k+1}(s)) ds \ge 0.
$$
Using the fact that $\EE_{\pi_n}(Y_{i-1}^{k+1}(t))<\infty$ we now get by Fatou's lemma and stationarity of $\pi_n$  that
\begin{equation}\int G^{i}_t(\ybf) \pi_n(d\ybf) \le
\liminf_{m\to \infty} \int G_t^{i,(m)}(\ybf) \pi_n(dy) =0. \label{eq:331n}
\end{equation}
Using \eqref{eq:359n}, a localization argument and monotone convergence, as below \eqref{eq:1030n}, now shows that
\begin{equation*}
\frac{1}{t}\EE_{\ybf} \int_0^{t}
 [Y_i(s)]^{k+1} ds \le
 \frac{1}{a_k}G^{i}_t(y) + \frac{h_k}{a_k}
 \frac{1}{t}\EE_{\ybf} \int_0^{t}
 [Y_{i-1}(s)]^{k+1} ds.
\end{equation*}
Integrating with respect to $\pi_n$ in the above inequality, and using \eqref{eq:331n} and \eqref{eq:332n} gives
\begin{equation*}
\EE_{\pi_n}[Y_i(t)]^{k+1}  \le
 \frac{h_k}{a_k}
 \EE_{\pi_n}[Y_{i-1}(t)]^{k+1} <\infty.
\end{equation*}
    The result follows.
\hfill
\qedsymbol

\paragraph{Acknowledgements.}
SB and DI were supported in part by the NSF-CAREER award (DMS-2141621).
AB was supported in part by the NSF (DMS-2134107, DMS-2152577). SB and AB were supported by the RTG award (DMS-2134107) from the NSF.
Part of this work was done while AB  was in residence at the Simons Laufer Mathematical Sciences Institute in
Berkeley, California, during Fall 2025.
This visit was supported by the National Science Foundation under Grant No. DMS-1928930.

{\footnotesize 
\bibliographystyle{is-abbrv}
\bibliography{references}
}

{\footnotesize 
%
%
%
}

\vspace{\baselineskip}
\vspace{\baselineskip}

\noindent{\scriptsize {\textsc{\noindent S. Banerjee, A. Budhiraja, and D. Imon,\newline
Department of Statistics and Operations Research\newline
University of North Carolina\newline
Chapel Hill, NC 27599, USA\newline
email: sayan@email.unc.edu
\newline
email: budhiraj@email.unc.edu
\newline
email: idilshad@unc.edu
\vspace{\baselineskip} } }}

\end{document}